\renewcommand{\epsilon}{\varepsilon}
\newcommand{\bdirsum}[2]{\underset{#2}{\overrightarrow{\bigoplus}}#1}
\newcommand{\dirsum}[2]{\underset{#2}{\overrightarrow{\oplus}}#1}
\DeclareMathOperator{\id}{id}
\newcommand{\bW}{\mathbf{W}}
\newcommand{\E}{\mathbb{E}}
\newcommand{\N}{\mathbb{N}}
\newcommand{\pP}{\mathbb{P}}
\newcommand{\R}{\mathbb{R}}
\newcommand{\cD}{{\mathcal D}}
\newcommand{\cI}{{\mathcal I}}
\newcommand{\cP}{{\mathcal P}}
\newcommand{\cQ}{{\mathcal Q}}
\newcommand{\cS}{{\mathcal S}}
\newcommand{\cT}{{\mathcal T}}
\newcommand{\cZ}{{\mathcal Z}}
\newcommand{\sC}{{\mathsf C}}
\newcommand{\sT}{{\mathsf T}}
\newcommand{\sP}{{\mathsf P}}
\newcommand{\sE}{{\mathsf E}}
\DeclareMathOperator{\Leb}{Leb}
\newtheorem{theorem}{Theorem}[section]{\bf}{\it}
{\bf}{\it}
{\bf}{\it}
{\bf}{\it}
\newtheorem{corollary}[theorem]{Corollary}{\bf}{\it}
\newtheorem{example}[theorem]{Example}{\it}{\rm}
\newtheorem{lemma}[theorem]{Lemma}{\bf}{\it}
\newtheorem{remark}[theorem]{Remark}{\it}{\rm}
\newtheorem{definition}[theorem]{Definition}{\bf}{\it}
\title[Decomposition of tournament limits]{Decomposition of tournament limits}
\author[Erik ~Th\"ornblad]{Erik Th\"ornblad}
 \address{Department of Mathematics, Uppsala University, Box 480, S-75106 Uppsala, Sweden.}
 \email{erik.thornblad@math.uu.se}
 \date{\today}
\keywords{Graph limits, tournaments, decompositions, graphons}
\begin{document}

\begin{abstract}
The theory of tournament limits and tournament kernels (often called graphons) is developed by extending common notions for finite tournaments to this setting; in particular we study transitivity and irreducibility of limits and kernels. We prove that each tournament kernel and each tournament limit can be decomposed into a direct sum of irreducible components, with transitive components interlaced. We also show that this decomposition is essentially unique.
\end{abstract}

\maketitle

\section{Introduction}


Informally speaking, graph limits are abstract limit objects of graph sequences $(G_n)_{n=1}^{\infty}$ with $v(G_n)\to \infty$ associated to the convergence of subgraph densities of the sequence $(G_n)_{n=1}^{\infty}$. This theory was initiated for undirected graphs for undirected graphs in \cite{LovaszSzegedy2006} and later developed in among others \cite{Borgs2008, Borgs2012}. The limit objects can be non--trivial only if $|E(G_n)|=\Theta(v(G_n)^2)$, so in this sense the theory of graph limits concerns itself with sequences of dense graphs, although attempts have been made to extend this notion to the sparse setting, see e.g. \cite{Lovaszbook}, which also provides a general overview of the theory of graph limits. For dense graph limits, the limit of a sequence of graphs is not sensitive to perturbations. In fact, if $(G_n)_{n=1}^{\infty}$ is a sequence of graphs that converging to some graph limit, it can be shown that this converges to the same limit even if one, for each $n\geq 1$, adds or removes up to $o(v(G_n)^2)$ edges to the graph $G_n$. It is common to try to extend results known about finite graphs to the setting of graph limits, and the non--sensitivity to perturbations of the sequence, coupled with additional analytical tools that become available, often makes results easier to prove in the limit case. However, it should be mentioned that it is not always the case that one can carry out sensible extension of the theory of finite graphs to graph limits.

In the undirected case, each graph limit can be represented by a so--called \emph{kernel} or \emph{graphon}, which is a symmetric function $[0,1]^2\to [0,1]$. The correspondence between kernels and the graph limits they represent is highly--nontrivial, and in general there are many kernels representing the same graph limit. The \emph{cut norm} defined on the set of such symmetric functions provides a framework of determining when two kernels represent the same graph limit. We shall avoid using the cut norm in this paper, so we do not pursue this matter further here. Of interest to us is that the theory of graph limits was extended to directed graphs by Diaconis and Janson \cite{DiaconisJanson}, who showed that digraph limits appear as extreme points in the set of distributions of exchangeable arrays (this holds also for undirected graphs), demonstrating a connection between graph limit theory and probability theory. Similar to the undirected case, they showed that each directed graph limit can be represented by a so--called kernel, which in the directed case consists of a quintuple of functions. 

A tournament is a complete directed graph, and we shall be restricting our attention to the setting of tournament limits. In this case, the corresponding (tournament) kernels have a particularly easy form; indeed, we shall see later that it suffices to consider functions $W:[0,1]^2\to [0,1]$ satisfying $W(x,y)+W(y,x)=1$. A trivial fact about tournaments is that these are dense directed graphs, so we can expect non--trivial limits and kernels of tournament sequences to appear.

Although we will develop the general framework of tournament limits and extend the notions of irreducibility and transitivity of finite tournaments to the setting of tournament kernels and tournament limits, our main aim is a certain decomposition result. Decomposition of finite graphs into `components' is a well--studied problem. The paper \cite{CourcelleDelhomme2008} contains several decomposition results for both undirected and directed graphs, including infinite such.  Generally speaking, a decomposition of a graph is a partition of its edge set or its vertex sets into subsets, typically under the requirement that each part of the partition satisfies some desired property (by themselves or pairwise).  The simplest result of this type is perhaps the decomposition of a finite graph into its connected components, which means a partition of the vertex set such that each part induces a connected subgraph, and there are no edges between different components. This type of decomposition was extended to (undirected) graph kernels and graph limits by Janson \cite{Janson2008}. 

To be precise, our aim is to extend the following decomposition result mentioned by Moon \cite{Moon1968} for finite tournaments. Given any tournament, its vertex set can be uniquely partitioned into subsets, each of which induces either an irreducible tournament (there are directed paths in both directions between any pair of vertices) or a transitive tournament (there are no cyclic subgraphs), such that these components can be linearly ordered with direction of the edges between the components respecting the linear ordering. In the present paper we extend the notions of irreduciblity and transitivity to tournament kernels and tournament limits, with the intention to prove corresponding decomposition results for touranment kernels and tournament limits.

The rest of this paper is structured as follows. In Section 2 we give the necessary background of directed graph limits. In Section 3 we provide some standard results about tournaments, including a proof of the decomposition result mentioned in the preceding paragraph. In Section 4 we introduce tournament kernels and limits and in Section 5 characterise the transitive tournament kernels and limits. Then, in Section 6, we define direct sums of tournament kernels, and prove a decomposition theorem. In order to extend this to tournament limits, we define in Section 7 what we mean by the direct sum of tournament limits. In Section 8 we show that the notion of irreducibility and transitivity coincides for tournaments, kernels and limits, and in Section 9 we prove a decomposition theorem for tournament limits. We mention that the ideas in Sections 6--9 draw heavily on \cite{Janson2008}.

\section{Preliminaries}
\label{sec:prelim}

A \emph{directed graph} (digraph) $G$ is a pair $(V(G),E(G))$, where $V(G)$ is a countable vertex set and $E(G)\subseteq V(G)\times V(G)$. A \emph{tournament} is a graph $G$ such that for any $i,j\in V(G)$, then $(i,j)\notin E(G)$ if $i=j$, and $(i,j)\in E(G)\Leftrightarrow (j,i)\notin E(G)$ if $i\neq j$. We define two types of random subgraphs for a digraph $G$. Let $G[k]$ be the random (sub--)graph induced by vertices $\{v_1,v_2,\dots, v_k\}\subseteq V(G)$ drawn uniformly at random with replacement, and let $G[k]'$ be the random (sub--)graph induced by vertices $\{v_1,v_2,\dots, v_k\}\subseteq V(G)$ drawn uniformly at random without replacement. 

A map $\Phi:V(F)\to V(G)$ is said to be a \emph{digraph homomorphism} from $F$ to $G$ if $(\Phi(i),\Phi(j))\in E(G)$ for all $(i,j)\in E(F)$, and is said to preserve non--adjacency if also $(\Phi(i),\Phi(j))\notin E(G)$ for all $(i,j)\notin E(F)$. Instead of saying that $\Phi$ is a homomorphism, we will often say that it preserves adjacency. For any digraph $G$, let  $v(G):=|V(G)|$ be the number of vertices. For any two digraphs $F,G$ we define the homomorphism density 
\begin{align}
 t(F,G):=\pP[F\subseteq G[v(F)]]=\frac{\hom (F,G)}{v(G)^{v(F)}}
\end{align}
where $\hom(F,G)$ is the number of digraph homomorphisms $V(F)\to V(G)$. The denominator is the total number of mappings $V(F)\to V(G)$, so it is clear that the final equality holds. In a similar fashion we define the injective and induced homomorphism densities as
\begin{align}
 t_{\text{inj}}(F,G):=\pP[F\subseteq G[v(F)']]=\frac{\text{inj} (F,G)}{(v(G))_{v(F)}} \\
 t_{\text{ind}}(F,G):=\pP[F= G[v(F)']]=\frac{\text{ind} (F,G)}{(v(G))_{v(F)}}.
\end{align}
where $\text{inj} (F,G)$ and $\text{ind} (F,G)$ respectively denote the number of injective homomorphisms $V(F)\to V(G)$ and the number of injective homomorphisms $V(F)\to V(G)$ preserving non--adjacency. The notation $(n)_k$ is the falling factorial defined by $(n)_k=n(n-1)\cdots (n-(k-1))$.

It is well--known that homomorphism numbers and induced homomorphism numbers give the same information in the graph--case. This works in an identical fashion for digraphs as well. Because the only induced subgraphs of tournaments are themselves tournaments, this is particularly useful when considering tournaments. More precisely, we have
\begin{align}
 t_{\text{inj}}(F,G)=\sum_{F'\supseteq F}t_{\text{ind}}(F',G)
\end{align}
where the sum ranges over all digraphs $F'$ with $v(F)= v(F')$ containing $F$ as a subgraph. This formula relates the injective and induced homomorphism numbers. In particular, if $G$ is a tournament the only induced subgraphs are themselves tournaments, so it suffices to sum over tournaments $F'$. Indeed, if $F,G$ are tournaments, then $\text{ind}(F,G)=\text{inj}(F,G)$. Moreover, it is not difficult to show  (e.g. \cite{HellNesetril2004}) that
\begin{align}
 \text{hom}(F,G)=\sum_{\Theta}\text{inj}(F/\Theta,G),
\end{align}
where the sum ranges over all partitions $\Theta$ of $V(F)$ and $F/\Theta$ is the quotient digraph. It is possible to show that 
\begin{align}
 \left|t_{\text{inj}}(F,G)-t(F,G) \right| = O\left(v(G)^{-1} \right)
\end{align}
which implies that the densities $t(\cdot, G), t_{\text{inj}}(\cdot, G)$ and $t_{\text{ind}}(\cdot, G)$ provide the same information for large digraphs $G$.
%
%
%

Let $\cD$ be the set of unlabelled directed graphs. As noted in \cite{DiaconisJanson}, the map $\tau(G):=(t(F,G))_{F\in \cD} \times (v(G)^{-1})\in [0,1]^{\cD}\times [0,1]$ is injective. We may therefore identify $\cD$ with its image $\tau(\cD)$, which is a subset of the compact metrizable space $[0,1]^{\cD}\times [0,1]$. We define $\overline{\cD}$ as the closure of $\cD$ in $[0,1]^{\cD}\times [0,1]$ and the set of digraph limits as $\widehat{\cD}=\overline{\cD}\setminus \cD$. We say that a sequence of digraphs $(G_n)_{n=1}^{\infty}$ \emph{converges} if $v(G_n)\to \infty$ and $t(F,G_n)$ converges for each digraph $F$; or if the sequence $(G_n)_{n=1}^{\infty}$ is eventually constant (up to isomorphism). The set of digraph limits $\widehat{\cD}=\overline{\cD}\setminus \cD$ therefore consists of the limits of sequences $(G_n)_{n=1}^{\infty}$ such that $v(G_n)\to \infty$ and $t(F,G_n)$ converges for each digraph $F$. Similarly, let $\cT\subseteq \cD$ denote the set of all unlabelled tournaments, i.e. those digraphs $G$ with no loops and a single directed edge between every pair of vertices. Like before we can identify $\cT$ with its image $\tau(\cT)$. We call $\widehat{\cT}=\overline{\cT}\setminus \cT$ the set of \emph{tournament limits}.

Throughout we let $(\cS,\mu)$ be some probability space, where we abuse notation and do not mention the underlying $\sigma$--algebra. Without loss of generality we may take $\cS=[0,1]$ and $\mu$ to be Lebesgue measure, but we shall try to be as general as possible here. In the theory of (undirected) graph limits, it is known that each graph limit can be represented by a symmetric function $\cS^2\to [0,1]$. In the directed case, Janson and Diaconis \cite{DiaconisJanson} showed that one should consider a quintuple $\bW=(W_{00},W_{01},W_{10},W_{11},w)$  where $W_{10},W_{01},W_{11}$ and $W_{00}$ are measurable functions $\cS^2\to [0,1]$ such that $W_{10}+W_{01}+W_{11}+W_{00}=1$ a.e., $W_{00}$ and $W_{11}$ are a.e. symmetric, $W_{01}(x,y)=W_{10}(y,x)$ a.e., and $w:\cS \to [0,1]$ is a measurable function. Whenever we refer to a ``quintuple'' we will mean a quintuple like this. Every quintuple  $\bW$ generates a random infinite digraph $G=G(\infty,\bW)$ with vertex set $\N\setminus \{0\}$ as follows. First choose an infinite sequence $X_1,X_2,\dots$ independently and $\mu$--uniformly at random from $\cS$. Then for all each $1\leq i<j$, let the edge probabilities be given by
\begin{align}
 \mathbb{P}[\mathbbm{1}_{(i,j)\in E(G)}=\alpha \text{ and } \mathbbm{1}_{(j,i)\in E(G)}=\beta]=W_{\alpha\beta}(X_i,X_j), \qquad \alpha,\beta\in \{0,1\}.
\end{align}
Finally, put a loop at each vertex $i$ with probability $w(X_i)$, independently of everything else. Using instead a finite sequence $X_1,X_2,\dots, X_n$ allows us to in the same way define a finite random graph $G(n,\bW)$, the distribution of which equals the distribution of the restriction of $G(\infty,\bW)$ to its first $n$ vertices. For a digraph $F$ with $k$ vertices, we define homomorphism and induced homomorphism numbers with respect to the quintuple $\bW$ by
\begin{align}
 t(F,\bW):=\mathbb{P}[F\subseteq G(k,\bW)],
\end{align}
and 
\begin{align}
 t_{\text{ind}}(F,\bW):=\mathbb{P}[F=G(k,\bW)].
\end{align}

The quintuples $\bW$ lead to another type of limit object of convergent graph sequences. We say that a sequence of digraphs $(G_n)_{n=1}^{\infty}$ \emph{converges} to the quintuple $\bW$ if $v(G_n)\to \infty$ and $t(F,G_n)\to t(F,\bW)$ for any finite digraph $F$. If $(G_n)_{n=1}^{\infty}$ converges to the quintuple $\bW$, then it is clear that $(G_n)_{n=1}^{\infty}$ converges also to some digraph limit $\Gamma$ in the space $\overline{\cD}$. If this happens we say that $\Gamma$ is \emph{represented} by $\bW$, and we say that two quintuples $\bW_1$ and $\bW_2$ are \emph{equivalent} if they represent the same graph limit $\Gamma$, i.e. if $t(F,\bW_1)=t(F,\bW_2)$ for all digraphs $F$. While the limit object $\Gamma$ is unique, typically it has many representatives and their relationship is rather complicated; the interested reader may refer to \cite{Lovaszbook} for more details. However, Diaconis and Janson \cite{DiaconisJanson} show that $(G(n,\bW))_{n=1}^{\infty}$ converges to $\bW$, so in fact each $\bW$ gives rise to a digraph limit $\Gamma_{\bW}$.

\section{Basic facts about tournaments}
In this section we introduce some basic and well--known results for tournaments. We will refer to these facts throughout, and many of the results in the remaining sections will be in direct correspondence with some result in this section.

Throughout we will make use of some special tournaments, which we define now. For all the tournaments presented below, the vertex set is $\{1,2,3,\dots k\}$, so we define the tournaments by giving their edge sets only. Let $\sE_k$ denote the empty graph on $k$ vertices, i.e. $E(\sE_k)=\emptyset$. Let $\sP_k$ denote the path on $k$ vertices, i.e. $E(\sP_k)=\{(1,2),(2,3),\dots, (k-1,k)\}$. Let $\sC_k$ denote the $k$--cycle, i.e. $E(\sC_k)=\{(1,2),(2,3),\dots, (k-1,k) (k,1)\}$. Finally, let $\sT_k$ denote the transitive graph on $k$ vertices, i.e. $E(\sT_k)=\{(i,j)\in \{1,2,\dots, k \}^2 \ : \ i<j \}$. In what follows, by a \emph{countable tournament} we mean a tournament with a finite or countably infinite vertex set.

The following lemma, the proof of which is straightforward, characterises the set of tournaments.

\begin{lemma}\label{lem:whenistournament}
 A digraph $G$ is a tournament if and only if it has no induced subgraphs isomorphic to $\sC_1,\sC_2$ or $\sE_2$.
\end{lemma}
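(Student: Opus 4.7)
The plan is to verify both directions directly from the definition of a tournament, since the lemma is essentially a restatement of that definition in terms of forbidden induced subgraphs on $1$- and $2$-vertex sets. For the forward implication, I would assume $G$ is a tournament and check that each of the three forbidden configurations is incompatible with the defining clauses. A loop $(i,i) \in E(G)$ would give an induced copy of $\sC_1$ but is ruled out by the clause $(i,j) \notin E(G)$ for $i=j$. For distinct vertices $i,j$, having both $(i,j)$ and $(j,i)$ in $E(G)$ would exhibit an induced $\sC_2$, while having neither would exhibit an induced $\sE_2$; both possibilities contradict the biconditional $(i,j) \in E(G) \Leftrightarrow (j,i) \notin E(G)$.

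For the converse, I would assume $G$ contains no induced copy of $\sC_1$, $\sC_2$, or $\sE_2$, and then verify the two defining properties of a tournament. Absence of induced $\sC_1$ immediately yields $(i,i) \notin E(G)$ for all $i \in V(G)$, handling the loop clause. For distinct $i,j \in V(G)$, the induced subgraph on $\{i,j\}$ has vertex set of size two with some subset of $\{(i,j),(j,i)\}$ as its edges; absence of $\sE_2$ forces at least one of these ordered pairs to lie in $E(G)$, while absence of $\sC_2$ forces at most one. Hence exactly one of $(i,j), (j,i)$ belongs to $E(G)$, which is the desired biconditional.

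There is no real obstacle here, since the three forbidden configurations are in bijective correspondence with the ways in which the tournament conditions on a pair of vertices (or a single vertex, in the case of loops) can fail. The only point to keep in mind is the distinction between ``induced subgraph'' and ``subgraph'' — but this causes no difficulty because $\sC_1$, $\sC_2$, and $\sE_2$ are determined by their edge sets on a fixed vertex set of size $1$ or $2$, so the induced subgraph on any such vertex set of $G$ is necessarily isomorphic to exactly one graph in the classification above.
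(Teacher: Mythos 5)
Your proof is correct, and it is precisely the straightforward case check the paper has in mind (the paper omits the proof entirely, remarking only that it is straightforward). Both directions are handled completely, and you correctly note the one subtlety — that ruling out loops via $\sC_1$ first is what guarantees the induced subgraph on a pair $\{i,j\}$ is determined by which of $(i,j),(j,i)$ are present.
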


One key notion is that of transitivity. For a countable tournament this is defined as follows.

\begin{definition}
 A countable tournament $G=(V(G),E(G))$ is \emph{transitive} if $(i,k)\in E(G)$ whenever $(i,j)\in E(G)$ and $(j,k)\in E(G)$. It is \emph{acyclic} if it does not contain any cycle $\sC_k$, $k\geq 3$ as a subgraph.
\end{definition}

One of the most well--known results in the study of (finite) tournaments is the following theorem that characterises transitive tournaments. We denote by $(d_i)_{i=1}^{v(G)}$ the number of outgoing edges from the $i$:th vertex of $G$ (in some arbitrary ordering of the vertices).
\begin{theorem}[\cite{Moon1968}]\label{thm:acyclictournament}
Let $G$ be a finite tournament with $n\geq 3$ vertices. Then the following statements are equivalent.
\begin{enumerate}[(1)]
 \item $G$ is transitive. \label{thm:acyclictournament1}
 \item $G$ is acyclic. \label{thm:acyclictournament2}
 \item $G$ contains no cycle of length $3$. \label{thm:acyclictournament3}
\item The vertices of $G$ may be ordered such that $ij\in E(G)$ if and only if $1\leq i<j \leq v(G)$.
 \item $\sum_{i=1}^{n}d_i^2=\frac{n(n-1)(2n-1)}{6}$.
 \item $G$ contains $\binom{n}{k}$ copies of $\sP_k$ as a subgraph, for each $k=1,2,\dots, n$.
\item $G$ contains $\binom{n}{3}$ copies of $\sP_3$ as a subgraph.
  \item $G$ contains $\binom{n}{k}$ copies of $\sT_k$ as a subgraph, , for each $k=1,2,\dots, n$.
\item  $G$ contains $\binom{n}{3}$ copies of $\sT_3$ as a subgraph.
\end{enumerate}
\end{theorem}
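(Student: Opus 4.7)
The plan is to prove the equivalences in three blocks: first the core equivalences $(1)$--$(4)$, then $(3)\Leftrightarrow(5)$ via a counting identity, and finally the path/subgraph count conditions $(6)$--$(9)$ which all reduce back to $(3)$.

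First I would handle $(1)$--$(4)$. For $(1)\Rightarrow(2)$, if $G$ is transitive and contains a cycle $v_1\to v_2\to\dots\to v_k\to v_1$, then repeated application of transitivity gives $v_1\to v_k$, contradicting $v_k\to v_1$. The implication $(2)\Rightarrow(3)$ is immediate. For $(3)\Rightarrow(1)$, if $(i,j),(j,k)\in E(G)$ but $(i,k)\notin E(G)$, then the tournament property forces $(k,i)\in E(G)$, producing a $3$-cycle on $\{i,j,k\}$. For $(1)\Rightarrow(4)$, transitivity together with the tournament property makes the relation ``$i\to j$'' a strict total order on $V(G)$, which gives the desired linear ordering; $(4)\Rightarrow(1)$ is obvious from the definition.

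For $(3)\Leftrightarrow(5)$ I would use the classical counting of $3$-cycles due to Moon. Every transitive triple on $\{u,v,w\}$ has a unique apex, namely the vertex $v$ with $v\to u$ and $v\to w$, and each vertex $v$ is the apex of exactly $\binom{d_v}{2}$ transitive triples (choose two out-neighbours); hence the number of transitive triples equals $\sum_i\binom{d_i}{2}$, and the number of $3$-cycles equals $\binom{n}{3}-\sum_i\binom{d_i}{2}$. Combined with $\sum_i d_i=\binom{n}{2}$, this vanishes if and only if $\sum_i d_i^2=\frac{n(n-1)(2n-1)}{6}$, which yields $(3)\Leftrightarrow(5)$.

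For $(6)$--$(9)$ I would first observe that $(1)\Rightarrow(6)$ and $(1)\Rightarrow(8)$: in a transitive tournament each $k$-subset induces a $\sT_k$, which contains exactly one subgraph isomorphic to $\sP_k$ (read in the linear order) and exactly one isomorphic to $\sT_k$. The implications $(6)\Rightarrow(7)$ and $(8)\Rightarrow(9)$ are simply the specializations to $k=3$. For $(7)\Rightarrow(3)$, a transitive triple contains one $\sP_3$ while a cyclic triple contains three, so letting $c$ denote the number of $3$-cycles the total $\sP_3$-count equals $\binom{n}{3}+2c$; equality with $\binom{n}{3}$ forces $c=0$. For $(9)\Rightarrow(3)$, since $\sT_3$ already has all $\binom{3}{2}$ edges, any $\sT_3$-subgraph of $G$ is automatically induced, so a triple contributes to the $\sT_3$-count if and only if it is transitive; equality with $\binom{n}{3}$ then forces every triple to be transitive.

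The main obstacle is essentially organizational: rather than trying to deduce $(3)$ directly from the $k$-indexed conditions in $(6)$ and $(8)$ for general $k$, one routes through the specializations $(7)$ and $(9)$, so that only the $k=3$ counts need to be analyzed carefully; the rest is bookkeeping, together with the Moon counting identity which is the only nontrivial input.
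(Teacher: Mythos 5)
Your proof is correct. The paper itself gives no proof of this theorem --- it is stated with a citation to Moon's book --- so there is nothing to compare against; your argument (the cycle $(1)$--$(4)$, the identity ``number of $3$-cycles $=\binom{n}{3}-\sum_i\binom{d_i}{2}$'' for $(3)\Leftrightarrow(5)$, and routing $(6)$--$(9)$ through the $k=3$ counts, using that a cyclic triple carries three copies of $\sP_3$ and no copy of $\sT_3$ while a transitive triple carries exactly one of each) is the standard one and all the steps check out.
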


Given a set of tournaments, we can define their direct sum as follows.

\begin{definition}\label{def:tourdirsum}
Let $(\cQ,<)$ be a countable and linearly ordered set and let $\{G_i \}_{i\in \cQ}$ be a set of countable tournaments. Define $\bdirsum{G_i}{i\in \cQ}$ to be the countable tournament with vertex set $\bigsqcup_{i\in \cQ}V(G_i)$ and edge set $E(G)=\bigcup_{i\in \cQ}E(G_i) \cup \bigcup_{i<j}\{(v,w) \ : \ v\in V(G_i), w\in V(G_j)\}$. As a special case, define $G_1 \dirsum{}{} G_2$ as $\bdirsum{G_i}{i\in \{1,2\}}$.
\end{definition}

The set of irreducible tournaments consists of those tournaments which do not admit a decomposition into a direct sum of subtournaments.

\begin{definition}
 A countable tournament $G=(V(G),E(G))$ is \emph{reducible} if there exists vertex--disjoint induced subtournaments $G_1,G_2$ such that $V(G_1)\cup V(G_2)=V(G)$ and $G=G_1 \dirsum{}{} G_2$. Otherwise $G$ is \emph{irreducible}.
\end{definition}

Related to the notion of irreducibility is the notion of strong connectedness. Given a tournament $G=(V(G),E(G))$, we define the outneighbourhood of $X\subseteq V(G)$ by 
\begin{align}
N(X)=\{w\in V(G) \ : \ (v,w)\in E(G) \text{ for some }v\in X \}.
\end{align}
This can be extended by defining $N^{0}(X)=X$ and $N^m(X)=N(N^{m-1}(X))$ for all $m\geq 1$. Note that $N^1(X)=N(X)$. We say that a countable tournament is \emph{strongly connected} if and only if for all $v\in V(G)$, we have $V(G)=\bigcup_{m=0}^{\infty}N^m(\{v\})$. For finite tournaments one may bound the indices of the union from above by $v(G)$. 

Moon \cite{Moon1968} gives the following theorem for finite tournaments, which we state here for countably tournaments. The extension is straightforward and we omit the proof. (For instance, one might show the implications $\ref{tour1}\Rightarrow \ref{tour4} \Rightarrow \ref{tour2} \Rightarrow \ref{tour1}$ and $\ref{tour2} \Leftrightarrow \ref{tour3}$.)

\begin{theorem}[\cite{Moon1968}]\label{thm:irreducibletour}
Let $G$ be a countable tournament. The following statements are equivalent.
\begin{enumerate}[(1)]
  \item $G$ is irreducible. \label{tour1}
  \item $G$ is strongly connected. \label{tour2}
  \item For any $v,w\in V(G)$ there exist (finite) directed paths from $v$ to $w$ and from $w$ to $v$. \label{tour3} \label{thm:irreducibletour3}
  \item There does not exist a proper subset $X\subseteq V(G)$ for which $N(X)\subseteq X$. \label{tour4}
\end{enumerate}
\end{theorem}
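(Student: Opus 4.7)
The plan is to follow the chain of implications $\ref{tour1}\Rightarrow \ref{tour4}\Rightarrow \ref{tour2}\Rightarrow \ref{tour1}$ suggested in the text, together with the separate equivalence $\ref{tour2}\Leftrightarrow \ref{tour3}$. The latter is essentially a restatement of the definition: since $N^m(\{v\})$ is precisely the set of vertices reachable from $v$ by a directed path of length $m$ (with $N^0(\{v\})=\{v\}$), the requirement that $\bigcup_{m\geq 0}N^m(\{v\})=V(G)$ for every $v$ is the same as requiring that every ordered pair of vertices be joined by some finite directed path.

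For $\ref{tour4}\Rightarrow \ref{tour2}$ I would fix $v$ and set $X:=\bigcup_{m\geq 0}N^m(\{v\})$. Because $N$ distributes over arbitrary unions of vertex subsets, any $w\in N(X)$ must lie in $N^{m+1}(\{v\})$ for some $m$, so $N(X)\subseteq X$; as $v\in X$ the set is nonempty, and condition \ref{tour4} (read as forbidding any nonempty proper $X$ with $N(X)\subseteq X$, which is the only substantive case) forces $X=V(G)$. For $\ref{tour2}\Rightarrow \ref{tour1}$ I argue by contrapositive: if $G=G_1\dirsum{}{}G_2$ is reducible, then by Definition~\ref{def:tourdirsum} every cross-edge between $V(G_1)$ and $V(G_2)$ is oriented from $V(G_1)$ to $V(G_2)$, so any $v\in V(G_2)$ satisfies $N^m(\{v\})\subseteq V(G_2)$ for all $m\geq 0$, and the union $\bigcup_{m} N^m(\{v\})$ then misses the nonempty set $V(G_1)$, contradicting strong connectedness.

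The remaining implication $\ref{tour1}\Rightarrow \ref{tour4}$ I would again handle contrapositively. Suppose a nonempty proper $X\subsetneq V(G)$ satisfies $N(X)\subseteq X$ and set $Y:=V(G)\setminus X$. Then by definition of $N$ there is no edge oriented from $X$ into $Y$, and since $G$ is a tournament (Lemma~\ref{lem:whenistournament}) every pair $(y,x)\in Y\times X$ has exactly one oriented edge between its endpoints, which must therefore be $(y,x)$. Linearly ordering the two-element index set so that $Y$ precedes $X$ then yields the decomposition $G=G[Y]\dirsum{}{}G[X]$ in the sense of Definition~\ref{def:tourdirsum}, so $G$ is reducible.

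I do not foresee a significant obstacle here. The only mild subtlety is in $\ref{tour4}\Rightarrow \ref{tour2}$, where one must note that in the countable case no finite bound on the iteration depth is available, so the union is taken over all $m\geq 0$ rather than bounded by $v(G)$; closedness of $X$ under $N$ then requires only the elementary fact that $N(\bigcup_m A_m)=\bigcup_m N(A_m)$. Everything else is a direct transcription of the finite-tournament argument sketched in Moon.
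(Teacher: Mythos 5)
Your proposal is correct and follows exactly the route the paper itself indicates (the chain $\ref{tour1}\Rightarrow\ref{tour4}\Rightarrow\ref{tour2}\Rightarrow\ref{tour1}$ plus $\ref{tour2}\Leftrightarrow\ref{tour3}$), filling in the details the paper omits; each step, including the reading of \ref{tour4} as excluding the empty set and the unbounded union in the countable case, is handled correctly.
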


We shall use the third notion of Theorem \ref{thm:irreducibletour} to prove a decomposition theorem for countable tournaments, up to order isomorphism of the labels of the components. Define an equivalence relation $\sim$ on $V(G)$ by $x\sim y$ if and only if $x=y$ or there are finite directed paths from $x$ to $y$ and from $y$ to $x$. This is an equivalence relation which partitions $V(G)$ into a set of equivalence classes. Each equivalence class $[x]$ in turn induces a subtournament of $G$. 

We claim that the equivalence classes can be ordered linearly in a natural way. Suppose that $x,y,z\in V(G)$ lie in different equivalence classes. If $(x,y),(y,z),(z,x)\in E(G)$, then $x\sim y$ (since there are directed paths in both directions). But we assumed that $[x]\neq [y]$, so this is impossible. Therefore, for any three vertices $x,y,z\in V(G)$ in different equivalence classes with $(x,y),(y,z)\in E(G)$, we have $(x,z)\in E(G)$. For a similar reason, if $x_1\sim x_2$ and $y_1\sim y_2$ but $x_1,x_2\not\sim y_1,y_2$, then $(x_1,y_1)\in E(G) \Leftrightarrow (x_2,y_2)\in E(G)$. This results in a linear ordering of the equivalence classes. Since $V(G)$ is a countable set, the number of equivalence classes is countable. Therefore the ordering of the equivalence classes is order isomorphic to some ordered set $\cQ$, where we may label the equivalence classes by the corresponding element in $\cQ$.

By Theorem \ref{thm:irreducibletour}, each equivalence class induces an irreducible subtournament of $G$. With this notation in mind, the above construction gives us the existence of a decomposition of countable tournaments. We prove uniqueness below, noting that our main goal is to later extend the following theorem to the tournament limits and tournament kernels.

\begin{theorem}\label{thm:tourdecomp1}
Let $G=(V(G),E(G))$ be a countable tournament. Then there exists some countable ordered set $\cQ$ and disjoint subtournaments $\{G_i \}_{i\in \cQ}$ such that
\begin{align}
 G=\bdirsum{G_i}{i\in \cQ},
\end{align}
where each $G_i$ is irreducible. Furthermore, this decomposition is unique up to order isomorphism of $\cQ$.
\end{theorem}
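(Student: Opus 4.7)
The plan is to treat existence and uniqueness separately. Existence is essentially done in the discussion preceding the theorem, so I would just assemble those observations. For uniqueness, I would show that the partition induced by any direct-sum decomposition into irreducible components must coincide with the equivalence classes of $\sim$, and that the linear order on the index set is forced by the directions of the inter-component edges.

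For existence, I would note the following. The equivalence classes of $\sim$ partition $V(G)$, and by the implication $\ref{tour3}\Rightarrow\ref{tour1}$ of Theorem \ref{thm:irreducibletour} each induced subtournament on a class is irreducible; the directed paths between two equivalent vertices guaranteed by the definition of $\sim$ stay inside their class, since every intermediate vertex along such a path is equivalent to both endpoints and thus belongs to the same class. The linear ordering of the classes, labelled by an order-isomorphic countable totally ordered set $\cQ$, has exactly the defining property of the direct sum: edges between distinct classes are uniformly directed from the lower-indexed class to the higher-indexed one. This yields $G = \bdirsum{G_i}{i\in \cQ}$.

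For uniqueness, suppose that $G = \bdirsum{H_j}{j \in \cR}$ is any such decomposition. The claim I would prove is that $x \sim y$ if and only if $x, y \in V(H_j)$ for some $j$, and that the order on $\cR$ matches the induced order on equivalence classes. The forward implication is immediate from irreducibility of $H_j$ and the implication $\ref{tour1} \Rightarrow \ref{tour3}$ of Theorem \ref{thm:irreducibletour}, which yields directed paths in both directions inside $H_j \subseteq G$. The matching of orders then follows because the direction of a single edge between two distinct components determines their relative order. Together these identifications give an order isomorphism between $\cR$ and $\cQ$.

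The only real obstacle is the converse implication: if $x \in V(H_i)$ and $y \in V(H_j)$ with $i \neq j$, then $x \not\sim y$. Here the key lemma I would establish is that along any directed path $v_0, v_1, \ldots, v_k$ in $G$, the sequence of component indices is non-decreasing, since consecutive vertices either lie in the same component or else the edge between them must go from the lower-indexed to the higher-indexed component by definition of the direct sum. In particular, no directed path can go from $y$ to $x$ when $i < j$, so $x \not\sim y$. With this lemma in hand, the equivalence classes coincide with the sets $V(H_j)$, and the entire uniqueness statement, up to order isomorphism of $\cQ$, follows immediately.
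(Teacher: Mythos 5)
Your existence argument is the same as the paper's (both simply invoke the $\sim$-construction preceding the theorem), but your uniqueness argument takes a genuinely different, and arguably cleaner, route. The paper compares two given decompositions directly: it takes the isomorphism $f$ between the two direct sums, uses irreducibility to show each $f(G_i)$ lands inside exactly one $G'_j$, rules out $|\cQ_j|>1$ by noting that otherwise $G'_j$ would itself split as a direct sum, and then checks that the induced bijection $\cQ\to\cQ'$ is order-preserving. You instead prove a canonical-form statement: any direct-sum decomposition into irreducible parts must have its parts equal to the $\sim$-classes. Your forward inclusion (each $V(H_j)$ lies in one class) uses strong connectedness of irreducible tournaments exactly as the paper implicitly does, but your converse rests on a lemma absent from the paper --- that component indices are non-decreasing along any directed path in a direct sum --- which is correct and immediately yields that no class straddles two components. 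Your approach buys a slightly stronger conclusion (every decomposition literally coincides with the canonical partition of $V(G)$, not merely up to an abstract isomorphism), while the paper's approach is formulated so that it also covers the case of two decompositions of two \emph{isomorphic} tournaments rather than two decompositions of the same labelled tournament; both correctly establish the stated uniqueness up to order isomorphism of $\cQ$.
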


\begin{proof}
Existence follows from the construction above; we prove uniqueness here. Suppose the two graphs
\begin{align}
 G& = \bdirsum{G_i}{i\in \cQ} \\
 G'&= \bdirsum{G'_i}{i\in \cQ'}
\end{align}
are isomorphic, where $\cQ,\cQ'$ are linearly ordered sets and each $G_i,G_j'$ is irreducible. Since $G$ and $G'$ are isomorphic, there exists a bijection $f:V(G)\to V(G')$ that preserves edges and non--edges (equivalently, preserving the direction of edges between all pairs of vertices). It suffices to prove that this bijection also preserves the ordering of the sets $\cQ$ and $\cQ'$. Denote by $f(G_i)$ the induced subtournament of $G'$ with vertex set $\{f(u) \ : \ u\in V(G_i) \}$. This is isomorphic to $G_i$, so $f(G_i)$ must be irreducible (since $G_i$ is). This implies there is exactly one $j\in \cQ'$ such that $G'_j\supseteq f(G_i)$. We want to show equality here. Let $\cQ_j=\{i\in \cQ \ : \ f(G_i)\subseteq G'_j\}$. Assume, for contradiction, that $| \cQ_j | > 1$. Since $f$ preserves adjacency and non--adjacency, we have $G_j'=f\left( \bdirsum{G_i}{i\in \cQ_j}\right)=\bdirsum{f(G_i)}{i\in \cQ_j}$, contradicting the irreducibility of $G_j'$.

Note that $|\cQ_j|>0$ for every $j\in \cQ'$ (by surjectivity of $f$), so this implies that $|\cQ_j|=1$ for every $j\in \cQ'$. This shows that $f$ induces a bijection $\cQ\to \cQ'$. We need to show that this is order--preserving. Let $i_0<i_1$ in $\cQ$ and let $j_0$ and $j_1$ be the unique elements of $\cQ'$ such that $f(G_{i_0})=G'_{j_0}$ and $f(G_{i_1})=G'_{j_1}$. For any $u\in V(G_{i_0})$, $v\in V(G_{i_1})$, we have $(f(u),f(v))\in E(G_2)$. This implies that $i_0 < i_1$ (in $\cQ$) if and only if $j_0<j_1$ (in $\cQ'$), so $\cQ$ and $\cQ'$ are order isomorphic.
%
\end{proof}

Some of the irreducible subtournaments in Theorem \ref{thm:tourdecomp1} are singletons; and some of these singletons appear consecutively in the ordering $\cQ$ (by consecutively we mean that there is no non--singleton irreducible subtournament between these singletons). Now merge the singletons of the decomposition in Theorem \ref{thm:tourdecomp1} whose labels appear consecutively in the ordering $\cQ$. Each such merged group of vertices induces a transitive subtournament. For each merged class, choose its label arbitrarily from the labels of its constituent equivalence classes. This merging procedure can be made more formal, but we hope the idea is clear. This gives us the following corollary.

\begin{corollary}\label{cor:tourdecomp}
Let $G=(V(G),E(G))$ be a countable tournament. Then there exists some countable ordered set $\cQ$ with disjoint subsets $\cQ_1,\cQ_2,\cI$ such that $\cQ=\cQ_1\cup \cQ_2\cup \cI$ and disjoint subtournaments $\{G_i \}_{i\in \cQ}$ such that
\begin{align}
 G=\bdirsum{G_i}{i\in \cQ},
\end{align}
where each $G_i$, $i\in \cQ_1$, is irreducible and has at least two vertices, each $G_i$, $i\in \cQ_2$, is transitive and has at least two vertices, and each $G_i$, $i\in \cI$ is a singleton. Also, for any $i<j$ in $\cQ_2\cup \cI$, there exists $k\in \cQ_1$ with $i<k<j$. Furthermore, this decomposition is unique up to order isomorphism of $\cQ$.
\end{corollary}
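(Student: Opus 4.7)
The plan is to build on Theorem \ref{thm:tourdecomp1} by post-processing the decomposition into irreducibles. I would start from $G = \bdirsum{G_i}{i\in \cQ'}$ with $\cQ'$ countable and ordered and each $G_i$ irreducible. Split $\cQ'$ into non-singleton indices (those $i$ with $v(G_i) \geq 2$) and singleton indices. The key observation is that by Definition \ref{def:tourdirsum}, if $J \subseteq \cQ'$ is any interval consisting entirely of singleton indices, then the induced subtournament on $\bigcup_{i \in J} V(G_i)$ has an edge from $v \in V(G_i)$ to $w \in V(G_j)$ precisely when $i < j$ in $J$, and is therefore transitive in the sense of Definition 3.2.

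Next, I would group the singleton indices into maximal \emph{runs}, i.e.\ maximal intervals of $\cQ'$ consisting entirely of singleton indices. Runs of length at least $2$ become transitive components on at least two vertices (their labels collected into $\cQ_2$), runs of length $1$ remain isolated singletons (labels in $\cI$), and the non-singleton irreducible indices form $\cQ_1$. Taking $\cQ = \cQ_1 \cup \cQ_2 \cup \cI$ with the order inherited from $\cQ'$ (choosing one representative label per run) gives an expression $G = \bdirsum{G_i}{i\in \cQ}$ of the desired form. Maximality of the runs yields the separation condition: given $i < j$ in $\cQ_2 \cup \cI$, if no element of $\cQ_1$ lay strictly between them in $\cQ'$, then the two runs represented by $i$ and $j$ would be contained in a single larger run of singletons, contradicting maximality.

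For uniqueness, suppose we have two decompositions of the stated form. I would refine each by splitting every transitive component into its constituent singletons, listed according to the natural linear order on a countable transitive tournament (the straightforward extension of part (4) of Theorem \ref{thm:acyclictournament}, which follows at once from Definition 3.2 since transitivity plus the tournament property defines a total order on the vertex set). Each refinement is then a decomposition of $G$ into irreducible subtournaments, so by the uniqueness in Theorem \ref{thm:tourdecomp1} the two refined index sets are order isomorphic. The separation condition in both original decompositions forces the maximal runs of singletons in the two refined index sets to coincide under this order isomorphism, which in turn induces an order isomorphism between the original index sets respecting the partitions into $\cQ_1, \cQ_2, \cI$.

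The main obstacle I anticipate lies in the uniqueness step: a priori one must rule out a transitive component in one decomposition being split across a non-singleton irreducible of the other, or a maximal run on one side being broken into shorter pieces on the other. The separation condition is precisely what prevents this, because it forces every maximal run of singletons in the refined decomposition to be represented by exactly one element of $\cQ_2 \cup \cI$ in the original decomposition, and the order isomorphism from Theorem \ref{thm:tourdecomp1} then matches these up canonically.
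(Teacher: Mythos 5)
Your proposal is correct and follows essentially the same route as the paper, which obtains the corollary by merging the consecutive singleton components of the decomposition in Theorem \ref{thm:tourdecomp1} into transitive blocks (the paper only sketches this informally and does not spell out uniqueness). Your uniqueness argument --- refining both decompositions back to irreducibles, invoking the uniqueness in Theorem \ref{thm:tourdecomp1}, and using the separation condition to identify maximal singleton runs with elements of $\cQ_2\cup\cI$ --- is a sound and welcome formalisation of what the paper leaves implicit.
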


Our final result in this section is a formula for the induced homomorphism densities of a direct sum of tournaments. We will need this later when studying the direct sums of tournament limits.

Given a finite tournament $F$ and a countable ordered set $\cQ$, let $\cP(F,\cQ)$ denote the set of decompositions $\bdirsum{F_i}{i\in \cQ}$ of $F$, where all but finitely many of the $F_i$ are non--empty. Note that any such decomposition can be obtained by merging consecutive elements of the decomposition of $F$ given in Theorem \ref{thm:tourdecomp1}.

\begin{theorem}\label{lem:tourdens}
 Let $\cQ=\{1,\dots, p\}$, and let $(G_i)_{i\in \cQ}$ be a sequence of finite tournaments. Then, for any finite tournament $F$,  
\begin{align}
 t_{\text{ind}}\left(F,\bdirsum{G_i}{i\in \cQ}\right)=\frac{1}{v(G)_{v(F)}}\sum_{\cP(F,\cQ)}\prod_{i\in \cQ}(v(G_{i}))_{v(F_i)}t_{\text{ind}}(F_i,G_{i}).
\end{align}
\end{theorem}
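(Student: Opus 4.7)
The plan is to establish a bijection between induced injective homomorphisms $V(F) \to V\bigl(\bdirsum{G_i}{i\in\cQ}\bigr)$ on the one hand, and pairs consisting of (i) a decomposition $F = \bdirsum{F_i}{i\in\cQ}$ in $\cP(F,\cQ)$ together with (ii) a tuple of induced injective homomorphisms $\phi_i : V(F_i)\to V(G_i)$ on the other. Once this bijection is in hand, the formula follows from the identity $\mathrm{ind}(F_i,G_i) = (v(G_i))_{v(F_i)}\, t_{\text{ind}}(F_i,G_i)$ together with the definition $t_{\text{ind}}(F,G) = \mathrm{ind}(F,G)/(v(G))_{v(F)}$ after grouping the right-hand side by $\cP(F,\cQ)$.

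To set up the bijection, I would first fix $G = \bdirsum{G_i}{i\in\cQ}$ and let $\phi : V(F)\to V(G)$ be an arbitrary injective homomorphism that preserves non-adjacency. Define $V(F_i) := \phi^{-1}(V(G_i))$ for $i\in\cQ$, and let $F_i$ be the subtournament of $F$ induced on $V(F_i)$. The key observation is that for $u\in V(F_i)$ and $v\in V(F_j)$ with $i<j$, Definition \ref{def:tourdirsum} forces $(\phi(u),\phi(v))\in E(G)$, and since $\phi$ preserves non-adjacency we conclude $(u,v)\in E(F)$. Hence $F = \bdirsum{F_i}{i\in\cQ}$, and this decomposition is an element of $\cP(F,\cQ)$. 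Each restriction $\phi_i := \phi|_{V(F_i)}$ is automatically an injective induced homomorphism $F_i \to G_i$.

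Conversely, given any decomposition $F = \bdirsum{F_i}{i\in\cQ}\in\cP(F,\cQ)$ and any tuple $(\phi_i)_{i\in\cQ}$ of injective induced homomorphisms $F_i\to G_i$, the concatenation $\phi := \bigsqcup_{i\in\cQ}\phi_i$ is well defined (the images land in disjoint vertex sets $V(G_i)$, so injectivity is preserved). I would check that $\phi$ preserves adjacency and non-adjacency by splitting into two cases: for a pair $u,v$ lying in the same $V(F_i)$ this is immediate from the corresponding property of $\phi_i$, and for $u\in V(F_i), v\in V(F_j)$ with $i<j$ the direction of the edge $(u,v)\in E(F)$ matches the direction of the edge $(\phi(u),\phi(v))\in E(G)$ by the direct-sum structure on both sides. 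The two constructions are mutually inverse by construction, giving
\begin{align}
 \mathrm{ind}\bigl(F,\bdirsum{G_i}{i\in\cQ}\bigr) = \sum_{\cP(F,\cQ)} \prod_{i\in\cQ}\mathrm{ind}(F_i,G_i).
\end{align}

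Dividing both sides by $(v(G))_{v(F)}$ and substituting $\mathrm{ind}(F_i,G_i) = (v(G_i))_{v(F_i)}\, t_{\text{ind}}(F_i,G_i)$ in each factor yields the claimed identity. The only genuine subtlety — the part I would be most careful with — is the forward direction of the bijection, namely verifying that the preimages $\phi^{-1}(V(G_i))$ genuinely realise a direct-sum decomposition of $F$ compatible with the ordering of $\cQ$; this is where the induced (non-adjacency preserving) nature of $\phi$ is essential, since a mere homomorphism would not force the edges between different parts to point the correct way.
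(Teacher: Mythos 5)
Your proposal is correct and takes essentially the same approach as the paper: both count induced injective homomorphisms according to the decomposition of $F$ they induce, obtain $\mathrm{ind}(F,G)=\sum_{\cP(F,\cQ)}\prod_{i\in\cQ}\mathrm{ind}(F_i,G_i)$, and then normalise by $(v(G))_{v(F)}$. Your write-up is if anything slightly more careful than the paper's in verifying that the preimages $\phi^{-1}(V(G_i))$ really yield a direct-sum decomposition respecting the order of $\cQ$.
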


\begin{proof}
Let $G=\bdirsum{G_i}{i\in \cQ}$. We compute the number of injective maps $V(F)\to V(G)$ that preserve adjacency and non--adjacency, and then do the appropriate normalisations to get the induced homomorphism density. 

First, note that each map $\Phi:V(F)\to V(G)=\bigsqcup_{i=1}^p V(G_i)$ induces an ordered partition $\{\Phi^{-1}(V(G_{1})),\dots, \Phi^{-1}(V(G_{p})) \}$ of $V(F)$. Moreover $F$ is isomorphic to $\bdirsum{ \Phi^{-1}(G_{i})}{i\in \cQ}$. 

Fix a decomposition $F=\bdirsum{F_i}{i\in \cQ}$, where some of the $F_i$ may be empty. We count the number of injective maps $\Phi:V(F)\to V(G)$ that preserve adjacency and non--adjacency and induce the fixed decomposition. First, given $\Phi:V(F)\to V(G)$, this can only happen if $\Phi^{-1}(V(G_{i}))\supseteq V(F_i)$ for each $i\in \cQ$. Given this, the restrictions $\Phi|_{V(F_i)}:V(F_i)\to V(G_i)$, for all $i\in \cQ$, must preserve adjacency and non--adjacency. For each $i\in \cQ$ there are $\text{ind}(F_i,G_i)$ such maps. Therefore there are precisely $\prod_{i\in \cQ}\text{ind}(F_i,G_{i})$ injective maps that preserve adjacency and non--adjacency and give rise to the given decomposition. Summing over all decompositions of $F$ we find
\begin{align}
 \text{ind}(F,G)=\sum_{\cP(F,\cQ)}\prod_{i\in \cQ} \text{ind}(F_i,G_{i}).
\end{align}
The result follows by using $v(F)=\sum_{i\in \cQ}v(F_i)$ and the defining relation
\begin{align}
 t_{\text{ind}}(F,G)=\frac{\text{ind}(F,G)}{(v(G))_{v(F)}}.
\end{align}
\end{proof} 

\section{Preliminary results on tournament limits and kernels} \label{sec:graphlimits}
In this section we begin our study of tournament limits and kernels. First, the following theorem characterises the set of tournament limits. The equivalence between \ref{thm:equiv1} and \ref{thm:equiv3} is motivated by Lemma \ref{lem:whenistournament}. The proof that follows is similar to that of Theorem 10.1 in Janson \cite{Janson2011b}, the proof of which is omitted and can be found in the preprint \cite{Janson2011bpreprint}. 

\begin{theorem} \label{thm:equiv}
 Let $\Gamma\in \widehat{\cD}$ be a digraph limit. Then the following are equivalent.
\begin{enumerate}[(i)]
 \item $\Gamma\in \widehat{\cT}$, i.e. $\Gamma$ is a tournament limit. \label{thm:equiv1}
 \item $t_{\text{ind}}(F,\Gamma)=0$ whenever $F$ is a finite digraph that is not a tournament. \label{thm:equiv2}
 \item $t_{\text{ind}}(\sC_1,\Gamma)=t_{\text{ind}}(\sC_2,\Gamma)=t_{\text{ind}}(\sE_2,\Gamma)=0$.\label{thm:equiv3}
 \item If $\bW=(W_{10},W_{01},W_{11},W_{00},w)$ represents $\Gamma$, then $w=0=W_{11}=W_{00}$ a.e. and $W_{10}(x,y)+W_{10}(y,x)=1$ a.e..\label{thm:equiv4}
\end{enumerate}
\end{theorem}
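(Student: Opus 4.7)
The plan is to establish the cycle (i)$\Rightarrow$(ii)$\Rightarrow$(iii)$\Rightarrow$(iv)$\Rightarrow$(i). The first implication is immediate because tournaments are closed under taking induced subgraphs: if $\Gamma$ is approximated by tournaments $(G_n)$ with $v(G_n)\to \infty$ and $F$ is not a tournament, then $t_{\text{ind}}(F,G_n)=0$ for every $n$, so the limit vanishes. The second implication, (ii)$\Rightarrow$(iii), is a direct appeal to Lemma \ref{lem:whenistournament}, which picks out $\sC_1$, $\sC_2$ and $\sE_2$ as canonical non-tournaments.

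The core of the argument is (iii)$\Rightarrow$(iv). Fixing a representing quintuple $\bW=(W_{10},W_{01},W_{11},W_{00},w)$, I would read off each of the three vanishing induced densities as an integral against components of $\bW$. The density $t_{\text{ind}}(\sC_1,\Gamma)$ is the probability that the single sample vertex carries a loop, namely $\int_{\cS} w\, d\mu$; its vanishing forces $w=0$ a.e., which kills the loop factors in every subsequent computation. With that in hand, $t_{\text{ind}}(\sE_2,\Gamma)$ reduces to the integral of $W_{00}$ over $\cS^2$ and $t_{\text{ind}}(\sC_2,\Gamma)$ reduces to the integral of $W_{11}$; pointwise nonnegativity of these components then gives $W_{00}=W_{11}=0$ a.e. Substituting into the normalisation $W_{00}+W_{01}+W_{10}+W_{11}=1$ a.e.\ collapses it to $W_{01}+W_{10}=1$ a.e., and combining with the built-in symmetry $W_{01}(x,y)=W_{10}(y,x)$ a.e.\ produces $W_{10}(x,y)+W_{10}(y,x)=1$ a.e., as required.

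For (iv)$\Rightarrow$(i), I would invoke the Diaconis--Janson fact recalled in Section \ref{sec:prelim} that $G(n,\bW)\to \bW$, and hence converges to $\Gamma$. Under the constraints in (iv), each $G(n,\bW)$ almost surely has no loops and exactly one directed edge between every pair of distinct vertices, so is a.s.\ a tournament; fixing a realization gives a sequence in $\cT$ with $v(G_n)\to\infty$ converging to $\Gamma$, whence $\Gamma\in\overline{\cT}$. Combined with $\Gamma\in \widehat{\cD}=\overline{\cD}\setminus \cD$, this yields $\Gamma\in\widehat{\cT}$. The main obstacle is the bookkeeping in the middle step: one has to correctly translate $t_{\text{ind}}$ of each of $\sC_1,\sC_2,\sE_2$ into the right integral against $\bW$ (keeping track of the labelled-vs-unlabelled convention and of the loop-absence factors $1-w$), and to justify passing from the vanishing of $\int W_{\alpha\alpha}\, d\mu^2$ to the a.e.\ vanishing of $W_{\alpha\alpha}$, which relies on pointwise nonnegativity of the kernel components together with the normalisation identity.
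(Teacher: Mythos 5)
Your proposal is correct and follows essentially the same route as the paper: the same cycle of implications, the same identification of $t_{\text{ind}}(\sC_1,\Gamma)$, $t_{\text{ind}}(\sC_2,\Gamma)$, $t_{\text{ind}}(\sE_2,\Gamma)$ with $\int w\,d\mu$, $\int W_{11}\,d\mu^2$, $\int W_{00}\,d\mu^2$, and the same use of the a.s.\ tournament property of $G(n,\bW)$ together with $G(n,\bW)\to\Gamma$ for the final implication. Your explicit derivation of $W_{10}(x,y)+W_{10}(y,x)=1$ from the normalisation and the symmetry $W_{01}(x,y)=W_{10}(y,x)$ is in fact slightly more detailed than the paper, which delegates that step to the characterisation recalled in Section \ref{sec:prelim}.
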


\begin{proof}
\ref{thm:equiv1} $\Longrightarrow$ \ref{thm:equiv2}. Every induced subgraph of a tournament is itself a tournament, so $t_{\text{ind}}(F,G)=0$ whenever $G$ is a tournament and $F$ is a digraph that is not a tournament. By continuity also $t_{\text{ind}}(F,\Gamma)=0$ for all $\Gamma\in \widehat{\cT}$.
  
\ref{thm:equiv2} $\Longrightarrow$ \ref{thm:equiv3}. The digraphs $\sC_1,\sC_2,\sE_2$ are not tournaments.
  
\ref{thm:equiv3} $\Longrightarrow$ \ref{thm:equiv4}. Suppose $\Gamma$ has a representative $\bW=(W_{10},W_{01},W_{11},W_{00},w)$. It suffices to show $W_{11}=W_{00}=0$ a.e. and $w=0$ a.e. We have
\begin{align}
 0=t_{ind}(\sC_1,\Gamma)=\pP[\sC_1= G(1,\bW)]=\pP[(1,1)\in E(G(1,\bW))]=\E[w(X_1)],
\end{align}
so $w=0$ a.e.. Similarly, given that $w=0$,
\begin{align}
 0=t_{ind}(\sC_2,\Gamma)=\pP[\sC_2 = G(2,\bW)] &=\pP[(1,2),(2,1)\in E(G(2,\bW)),(1,1),(2,2)\notin E(G(2,\bW))] \\
 &=\pP[(1,2),(2,1)\in E(G(2,\bW))] \\
  &=\E[W_{11}(X_1,X_2)] 
\end{align}
and
\begin{align}
  0=t_{ind}(\sE_2,\Gamma)=\pP[\sE_2 = G(2,\bW)] 
 &=\pP[(1,2),(2,1),(1,1),(2,2)\notin E(G(2,\bW))] \\
 &=\pP[(1,2),(2,1)\notin E(G(2,\bW))] \\
 &=\E[W_{00}(X_1,X_2)],
\end{align}
so $W_{00}=W_{11}=0$ a.e.. By the characterisation mentioned in Section \ref{sec:prelim}, it follows that $W_{10}(x,y)+W_{10}(y,x)=1$ a.e..
 
\ref{thm:equiv4} $\Longrightarrow$ \ref{thm:equiv1}. It follows from the construction of $G(k,\bW)$ in Section \ref{sec:prelim} that $G(k,\bW)$ does not contain $\sC_1,\sC_2$ or $\E_2$ as an induced subgraph (almost surely), so by Lemma \ref{lem:whenistournament}, the random digraphs $G(k,\bW)$ are tournaments almost surely. But $G(k,\bW)$ converges to $\Gamma$ as $k\to \infty$, so $\Gamma \in \widehat{\cT}$.
  \end{proof}

Hence, any quintuplet $\bW$, representing a tournament limit $\Gamma\in \widehat{\cT}$, may be identified with its first coordinate $W:=W_{10}$. This motivates the following terminology.
\begin{definition}
 A \emph{tournament kernel} is a measurable function $W:\cS^2\to[0,1]$ such that $W(x,y)+W(y,x)=1$ for $\mu\times\mu$--a.e. $(x,y)\in \cS^2$.
\end{definition}
Henceforth we will make the identification $\bW=W_{10}=W$, and we will say that $W$ represents $\Gamma$ if $\bW$ represents $\Gamma$, etc.

\begin{lemma}\label{lem:intform}
Let $W:(\cS^2,\mu)\to [0,1]$ be a tournament kernel. This defines a tournament limit $\Gamma_W \in \widehat{\cT}$ such that $G(n,W)\to \Gamma_{W}$ a.s. as $n\to \infty$ and
\begin{align}
 t(F,\Gamma_W)&=\int_{\cS^{v(F)}}\prod_{(i,j)\in E(F)}W(x_i,x_j)\prod_{i=1}^{v(F)}d\mu(x_i) \\
 t_{ind}(F,\Gamma_W)&=\int_{\cS^{v(F)}}\prod_{(i,j)\in E(F)}W(x_i,x_j)\prod_{i=1}^{v(F)}d\mu(x_i).
\end{align}
for all finite tournaments $F$.
\end{lemma}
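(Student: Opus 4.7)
\medskip

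\noindent \textbf{Proof proposal.} The plan is to package $W$ as a quintuple in the sense of Section~\ref{sec:prelim}, invoke the Diaconis--Janson existence/convergence theorem to produce a digraph limit, use Theorem~\ref{thm:equiv} to upgrade it to a tournament limit, and then compute the densities by direct integration against the defining construction of $G(n,\bW)$.

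First I would set $\bW=(W_{10},W_{01},W_{11},W_{00},w)$ with $W_{10}=W$, $W_{01}(x,y)=1-W(x,y)$, $W_{11}=W_{00}=0$ and $w=0$; the a.e.\ condition $W(x,y)+W(y,x)=1$ ensures this is a valid quintuple in the sense of Section~\ref{sec:prelim}. By the Diaconis--Janson result recalled at the end of Section~\ref{sec:prelim}, the sequence $(G(n,\bW))_{n=1}^{\infty}$ converges almost surely to a digraph limit $\Gamma_{\bW}\in\overline{\cD}$; define $\Gamma_W:=\Gamma_{\bW}$. Because $W_{11}=W_{00}=0$ a.e., $w=0$ a.e., and $W_{10}(x,y)+W_{10}(y,x)=1$ a.e., the implication \ref{thm:equiv4}$\Longrightarrow$\ref{thm:equiv1} of Theorem~\ref{thm:equiv} gives $\Gamma_W\in\widehat{\cT}$.

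Next I would compute the densities from the construction of $G(k,W)$. Conditionally on the i.i.d.\ sample $X_1,\dots,X_k$, the events $\{(i,j)\in E(G(k,W))\}$ for $i\neq j$ are independent across unordered pairs, and for each ordered pair
\begin{align}
\pP[(i,j)\in E(G(k,W))\mid X_1,\dots,X_k]=W(X_i,X_j).
\end{align}
Since $F$ is a tournament on $k=v(F)$ vertices, for every unordered pair $\{i,j\}$ exactly one of $(i,j),(j,i)$ lies in $E(F)$, so
\begin{align}
t(F,\Gamma_W)=\pP[F\subseteq G(k,W)]=\E\!\left[\prod_{(i,j)\in E(F)}W(X_i,X_j)\right]=\int_{\cS^{v(F)}}\prod_{(i,j)\in E(F)}W(x_i,x_j)\prod_{i=1}^{v(F)}d\mu(x_i),
\end{align}
using independence of the $X_i$. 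For $t_{\text{ind}}$ I would argue that $G(k,W)$ is almost surely a tournament (by the construction and Lemma~\ref{lem:whenistournament}, as in the proof of \ref{thm:equiv4}$\Longrightarrow$\ref{thm:equiv1}), so two tournaments on the same labelled vertex set coincide as soon as one contains the other. Hence $\{F=G(k,W)\}=\{F\subseteq G(k,W)\}$ up to a null event, giving $t_{\text{ind}}(F,\Gamma_W)=t(F,\Gamma_W)$ and the same integral formula.

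There is no real obstacle here: the a.s.\ convergence $G(n,W)\to\Gamma_W$ is invoked as a black box from \cite{DiaconisJanson}, and the integral formulas are immediate from Fubini and the conditional-independence structure of the edges in $G(k,W)$. The only point requiring a small amount of care is the observation that $t=t_{\text{ind}}$ for tournament arguments, which follows from the fact that tournament homomorphisms between tournaments of the same order must be bijective and edge-reversing-free.
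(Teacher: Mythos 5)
Your proposal is correct and follows essentially the same route as the paper: existence of $\Gamma_W$ and the a.s.\ convergence $G(n,W)\to\Gamma_W$ are taken from \cite{DiaconisJanson}, and the density formulas are obtained by writing $\pP[F=G(v(F),W)]$ as an expectation of a product of edge indicators and integrating, using the conditional independence of the edges given $X_1,\dots,X_{v(F)}$. Your additional remarks (the explicit quintuple $(W,1-W,0,0,0)$ justifying membership in $\widehat{\cT}$ via Theorem~\ref{thm:equiv}, and the observation that $t=t_{\text{ind}}$ because two tournaments on the same labelled vertex set coincide once one contains the other) are correct and merely make explicit what the paper leaves implicit.
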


\begin{proof}
The fact that $W$ defines a tournament limit $\Gamma_{W}$ follows from \cite{DiaconisJanson}. It is not difficult to see that the formulae hold, e.g. 
\begin{align}
 t_{ind}(F,\Gamma_W) = \pP[F=G(v(F),W)] 
&= \E\left[\mathbbm{1}_{\{F=G(v(F),W)\}}\right] \\
&= \E\left[\prod_{(i,j)\in E(F)} \mathbbm{1}_{\{(i,j)\in E(G(v(F),W))\}} \right] \\
&=\int_{\cS^{v(F)}} \prod_{(i,j)\in E(F)}W(x_i,x_j)\prod_{i=1}^{v(F)}d\mu(x_i).
\end{align}
\end{proof}

\section{Transitive kernels}
In this section we obtain an analogue of Theorem \ref{thm:acyclictournament} in the setting of tournament kernels. We say that a tournament kernel $W$ is \emph{acyclic} if $t(C_k,W)=0$ for all $k\geq 3$, and that it is \emph{transitive} if $W(x,y),W(y,z)>0 \Rightarrow W(x,z)=1$ for $\mu^3$--a.e. $(x,y,z)\in \cS^3$.

We need the following standard viewpoint of the adjacency matrix of a directed graph. A tournament $G=(V(G),E(G))$ with $v(G)=n$ vertices labeled $1,\dots, n$ defines an \emph{adjacency function} 
\begin{align}
 A_G(x,y) = \begin{cases} 1 & (i,j)\in E(G), x\in \left(\frac{i-1}{n},\frac{i}{n} \right], y\in \left( \frac{j-1}{n},\frac{j}{n} \right], \\
             0 & \text{otherwise}.
            \end{cases} \label{eq:graphgraphon}
\end{align}
Note that $A_G$ is almost a tournament kernel, in the sense that $A_G(x,y)+A_G(y,x)=1$ except on the set $\bigcup_{i=1}^n\left(\frac{i-1}{n},\frac{i}{n} \right]^2$, the measure of which tends to zero as $n\to \infty$. Note that 
\begin{align}
 t(F,G)=\int_{[0,1]^{v(F)}}\prod_{ij\in E(F)}A_G(x_i,x_j)\prod_{i\in V(F)} d\mu(x_i),
\end{align}
which should be compared to the expressions in Lemma \ref{lem:intform}.

\begin{lemma}\label{lem:acycliconv}
 The sequence $(\sT_n)_{n=1}^{\infty}$ of transitive tournaments converges to the tournament limit represented by $W_T:([0,1]^2,\Leb)\to [0,1]$, where $\Leb$ is Lebesgue measure and $W_T(x,y)=\mathbbm{1}_{\{x\leq y\}}$.
\label{lem:conv}
\end{lemma}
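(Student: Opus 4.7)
The plan is to verify that $t(F, \sT_n) \to t(F, \Gamma_{W_T})$ for every finite digraph $F$. Using the adjacency--function representation \eqref{eq:graphgraphon} for $\sT_n$ and the integral formula from Lemma \ref{lem:intform} for $\Gamma_{W_T}$, both quantities can be written as an integral over $[0,1]^{v(F)}$ of $\prod_{(i,j)\in E(F)} K(x_i, x_j)$ with $K = A_{\sT_n}$ and $K = W_T$ respectively, each integrand taking values in $[0,1]$. I would therefore reduce the whole statement to pointwise a.e.\ convergence $A_{\sT_n} \to W_T$ on $[0,1]^2$ and then invoke bounded convergence.

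For the pointwise convergence, note that $A_{\sT_n}(x,y) = 1$ precisely when the intervals $(\tfrac{i-1}{n}, \tfrac{i}{n}]$ and $(\tfrac{j-1}{n}, \tfrac{j}{n}]$ containing $x$ and $y$ satisfy $i < j$. For any $(x,y)$ with $x \neq y$, once $n > 1/|y-x|$ the points $x$ and $y$ lie in different intervals whose indices are ordered according to the order of $x$ and $y$ on $[0,1]$, so $A_{\sT_n}(x,y) \to \mathbbm{1}_{\{x < y\}}$. The diagonal $\{x = y\}$ is $\mu \times \mu$--null, and on its complement $\mathbbm{1}_{\{x < y\}}$ agrees with $W_T(x,y) = \mathbbm{1}_{\{x \leq y\}}$, so $A_{\sT_n} \to W_T$ almost everywhere on $[0,1]^2$. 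Consequently $\prod_{(i,j) \in E(F)} A_{\sT_n}(x_i, x_j) \to \prod_{(i,j) \in E(F)} W_T(x_i, x_j)$ for a.e.\ $(x_1,\dots,x_{v(F)}) \in [0,1]^{v(F)}$ (the exceptional set being a finite union of lower--dimensional diagonals, which is null).

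Applying bounded convergence yields $t(F, \sT_n) \to t(F, \Gamma_{W_T})$ for every finite digraph $F$, which is precisely the claim of convergence to $\Gamma_{W_T}$. The only point requiring any care is a preliminary sanity check that $W_T$ is a bona fide tournament kernel, which holds because $W_T(x,y) + W_T(y,x) = 1$ off the diagonal, hence a.e.; after this, the argument is a routine bounded--convergence exchange and I do not foresee any genuine obstacle.
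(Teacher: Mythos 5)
Your argument is correct and is essentially the paper's own proof: both write $t(F,\sT_n)$ via the adjacency function $A_{\sT_n}$, establish pointwise a.e.\ convergence $A_{\sT_n}\to W_T$ (off the null diagonal), and conclude by dominated (bounded) convergence that $t(F,\sT_n)\to t(F,W_T)$ for every finite digraph $F$. Your write-up merely spells out the pointwise-convergence step in more detail than the paper does.
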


\begin{proof}
By Theorem \ref{thm:equiv} each tournament limit $\Gamma$ can be represented by a tournament kernel $W:=W_{1,0}$. Let $A_{n}:=A_{\sT_n}$ be the function defined as in \eqref{eq:graphgraphon}, i.e. 
\begin{align}
 A_n(x,y)=\mathbbm{1}_{\{x\leq \frac{\lceil ny \rceil}{n}\}}.
\end{align}

Let $F=(V(F),E(F))$ be a finite digraph. Then 
\begin{align}
 t(F,G_n)
&=\int_{[0,1]^{v(F)}}\prod_{ij\in E(F)}A_n(x_i,x_j) \prod_{i\in V(F)}d\Leb(x_i)\\
&\to  \int_{[0,1]^{v(F)}}\prod_{ij\in E(F)}\mathbbm{1}_{\{x_i\leq x_j\}} \prod_{i\in V(F)}d\Leb(x_i) \\
& = t(F,W_T)
\end{align}
by the dominated convergence theorem and the fact that $A_n$ converges pointwise a.e. to $W_T$. Hence $(\sT_n)_{n=1}^{\infty}$ converges to the tournament limit that is represented by $W_T$.
\end{proof}

\begin{figure}
\centering
\includegraphics[width=0.9\textwidth]{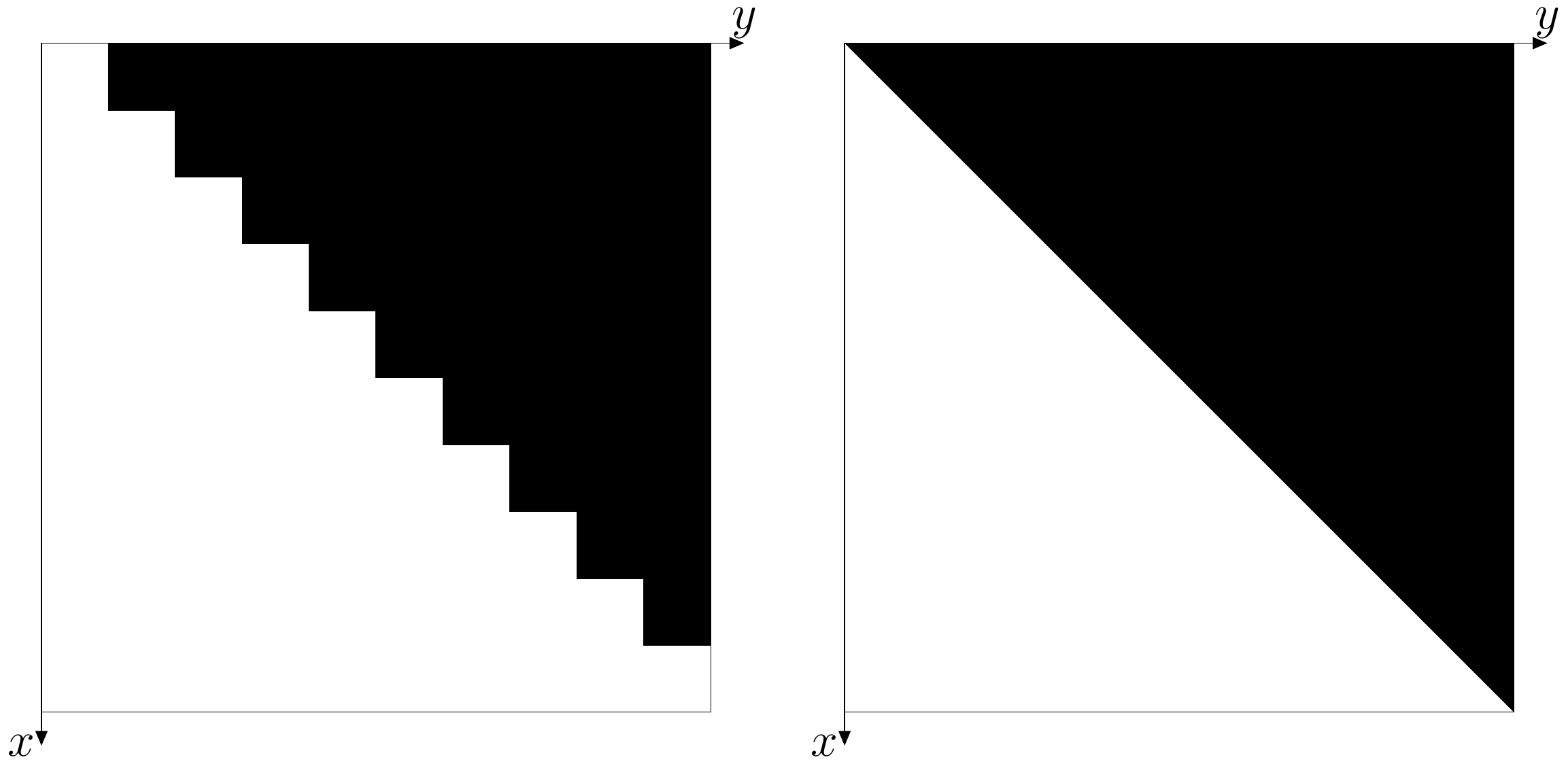}
\caption{The leftmost figure shows the function $A_{10}(x,y)$, which is $1$ on the black region and $0$ on the white region. The rightmost figure shows the transitive kernel $W:([0,1]^2, \lambda)\to [0,1]$, defined by $W(x,y)=\mathbbm{1}_{\{x\leq y\}}$.}
\end{figure} 

We say that a family of tournaments $\cP\subseteq \cT$ is \emph{hereditary} if for all $G\in \mathcal{P}$, then $F\in \cP$ whenever $F$ is a subtournament of $G$. Janson \cite{Janson2011} gives the following theorem about hereditary properties of graph limits, which can be adopted to digraph limits or in this case tournament limits. In that setting one requires $F$ to be an induced subgraph, but in the tournament setting, the subtournaments are precisely the induced subgraphs.

\begin{theorem}[Janson \cite{Janson2011}]
 Let $\cP\subseteq \cT$ be hereditary and let $\Gamma\in \widehat{\cT}$ be represented by the tournament kernel $W$. Then $\Gamma\in \widehat{\cP}=\overline{\cP}\setminus \cP$ if and only if $G(n,W)\in \cP$ a.s. for every $n\geq 1$.
\label{thm:hereditary}
\end{theorem}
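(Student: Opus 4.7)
The plan is to prove each implication directly using the sampling construction $G(n,W)$. The two key ingredients I will use are the almost sure convergence $G(n,W)\to \Gamma_W = \Gamma$ stated in Lemma \ref{lem:intform}, together with the identity $\pP[G(n,W)=F]=t_{\text{ind}}(F,\Gamma)$ for every $n$-vertex tournament $F$ (also from Lemma \ref{lem:intform}) and its finite-graph analogue $\pP[G_k[n]'=F]=t_{\text{ind}}(F,G_k)$ from Section \ref{sec:prelim}.

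For the reverse implication I would argue as follows. Assume $G(n,W)\in \cP$ almost surely for every $n\geq 1$. Since $G(n,W)\to \Gamma$ almost surely in $\overline{\cT}$ and $v(G(n,W))=n\to\infty$, there is a realization along which the entire sequence $(G(n,W))_{n=1}^{\infty}$ lies in $\cP$ and converges to $\Gamma$, exhibiting $\Gamma$ as a limit of graphs from $\cP$. Hence $\Gamma\in \overline{\cP}$. Because $\Gamma\in \widehat{\cT}=\overline{\cT}\setminus \cT$ is not itself a finite tournament, $\Gamma\notin \cP\subseteq \cT$, so $\Gamma\in \overline{\cP}\setminus \cP=\widehat{\cP}$.

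For the forward implication I would start from $\Gamma\in \widehat{\cP}$, so there is a sequence $(G_k)_{k=1}^{\infty}\subseteq \cP$ with $v(G_k)\to \infty$ and $t(F,G_k)\to t(F,\Gamma)$ for every finite digraph $F$. Invoking the relations between $t$, $t_{\text{inj}}$ and $t_{\text{ind}}$ recalled in Section \ref{sec:prelim} (the bound $|t_{\text{inj}}(F,G_k)-t(F,G_k)|=O(v(G_k)^{-1})$ and the inclusion--exclusion between injective and induced counts), this upgrades to $t_{\text{ind}}(F,G_k)\to t_{\text{ind}}(F,\Gamma)$ for every finite $F$. Fix $n\geq 1$. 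For every $k$ with $v(G_k)\geq n$, the sampled graph $G_k[n]'$ is an induced subtournament of $G_k\in \cP$, hence itself in $\cP$ by heredity. Summing the convergent induced densities over the finite set of $n$-vertex tournaments belonging to $\cP$,
\begin{align}
\pP[G(n,W)\in \cP]
=\sum_{\substack{F\in \cP\\ v(F)=n}}t_{\text{ind}}(F,\Gamma)
=\lim_{k\to \infty}\sum_{\substack{F\in \cP\\ v(F)=n}}t_{\text{ind}}(F,G_k)
=\lim_{k\to \infty}\pP[G_k[n]'\in \cP]
=1,
\end{align}
which gives the claim.

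The main (and relatively minor) obstacle I anticipate is the bookkeeping needed to transfer convergence from $t(F,\cdot)$ to $t_{\text{ind}}(F,\cdot)$ and to justify the interchange of summation and limit above; the latter is harmless because, for fixed $n$, the sum ranges over the finite set of $n$-vertex tournaments in $\cP$. Once this routine transfer is in place, both implications follow immediately from the sampling convergence in Lemma \ref{lem:intform}.
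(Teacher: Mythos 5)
Your argument is correct. Note, however, that the paper itself offers no proof of this statement: it is imported verbatim from Janson \cite{Janson2011} (adapted to tournaments), so there is no in-paper argument to compare against. Your two directions are the standard sampling argument, which is essentially how the result is proved in the cited source. The reverse direction is clean: the countably many a.s.\ events $\{G(n,W)\in\cP\}$ together with the a.s.\ convergence $G(n,W)\to\Gamma$ (which makes sense because the $G(n,W)$ are coupled as restrictions of $G(\infty,W)$) have a common realization, exhibiting $\Gamma$ as a limit of a sequence from $\cP$ with $v\to\infty$; and $\Gamma\notin\cT\supseteq\cP$ since $\Gamma\in\widehat{\cT}$. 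The forward direction is also sound: heredity gives $\pP[G_k[n]'\in\cP]=1$ for $v(G_k)\ge n$, the sum over $n$-vertex tournaments in $\cP$ is finite so the interchange of limit and sum is immediate, and the transfer from $t(\cdot,G_k)\to t(\cdot,\Gamma)$ to $t_{\text{ind}}(\cdot,G_k)\to t_{\text{ind}}(\cdot,\Gamma)$ follows from the triangular (hence invertible) relation $t_{\text{inj}}(F,G)=\sum_{F'\supseteq F}t_{\text{ind}}(F',G)$ together with $|t_{\text{inj}}-t|=O(v(G_k)^{-1})$, both recalled in Section \ref{sec:prelim}; the same relations identify the resulting limits with $\pP[G(n,W)=F]=t_{\text{ind}}(F,W)=t_{\text{ind}}(F,\Gamma)$. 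The only point worth making explicit if you wrote this up is that $G(n,W)$ is a.s.\ an $n$-vertex tournament (Theorem \ref{thm:equiv}), so that $\pP[G(n,W)\in\cP]$ really is the sum of $t_{\text{ind}}(F,\Gamma)$ over $n$-vertex tournaments $F\in\cP$.
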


The family $\cP=\{G\in \cT \ : \ t(\sC_3,G)=0 \}$ of transitive tournaments is an example of a hereditary family. 

\begin{lemma}\label{lem:weakiso}
Let $\Gamma\in \widehat{\cT}$ be a tournament limit. If $t(\sC_3,\Gamma)=0$, then $\Gamma$ can be represented by the tournament kernel $W_T(x,y)=\mathbbm{1}_{\{x\leq y\}}$.
\end{lemma}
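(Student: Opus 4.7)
The plan is to pick any tournament kernel $W$ representing $\Gamma$ (one exists by Theorem \ref{thm:equiv}), show that $G(n,W)\cong \sT_n$ almost surely for every $n$, and then use Lemma \ref{lem:acycliconv} to identify $\Gamma$ with the tournament limit represented by $W_T$.

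The key step is to translate the hypothesis into a pointwise identity on $W$. Lemma \ref{lem:intform} gives
\begin{align}
0 \;=\; t(\sC_3,\Gamma) \;=\; \int_{\cS^3} W(x,y)\,W(y,z)\,W(z,x)\,d\mu(x)\,d\mu(y)\,d\mu(z),
\end{align}
so the non-negative integrand $W(x,y)W(y,z)W(z,x)$ vanishes for $\mu^3$-a.e.\ $(x,y,z)$. Since $X_1,\dots,X_n\sim \mu$ are iid, for each ordered triple of distinct indices $i,j,k\in\{1,\dots,n\}$ the probability that $G(n,W)$ contains the directed $3$-cycle $i\to j\to k\to i$ equals $\E[W(X_i,X_j)W(X_j,X_k)W(X_k,X_i)]=0$. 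A union bound over the finitely many triples shows that $G(n,W)$ contains no directed triangle almost surely, and then Theorem \ref{thm:acyclictournament} forces $G(n,W)$ to be a.s.\ transitive; since the transitive tournament on $n$ vertices is unique up to isomorphism, $G(n,W)\cong \sT_n$ almost surely. (Equivalently, transitive tournaments form a hereditary class and one could appeal directly to Theorem \ref{thm:hereditary} with $\cP=\{G\in \cT : t(\sC_3,G)=0\}$.)

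To conclude, recall from Lemma \ref{lem:intform} that $G(n,W)\to \Gamma$ almost surely in $\overline{\cD}$. Because the embedding $\tau$ into $[0,1]^{\cD}\times[0,1]$ depends only on the isomorphism class, the previous step yields $\tau(\sT_n)=\tau(G(n,W))\to \Gamma$. But Lemma \ref{lem:acycliconv} already establishes $\tau(\sT_n)\to \Gamma_{W_T}$, the tournament limit represented by $W_T$, and uniqueness of limits in the compact space $[0,1]^{\cD}\times[0,1]$ gives $\Gamma=\Gamma_{W_T}$; so $W_T$ represents $\Gamma$, as required. The argument is light: the only substantive step is moving from the integral identity to almost-sure transitivity of $G(n,W)$, which is immediate once one notes that $\mu^3$-null sets contribute $0$ to the law of any three-coordinate sample.
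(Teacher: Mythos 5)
Your proof is correct, and its overall skeleton matches the paper's: reduce everything to showing that $G(n,W)\cong \sT_n$ almost surely for every $n$, then combine the a.s.\ convergence $G(n,W)\to\Gamma$ with Lemma \ref{lem:acycliconv} and uniqueness of limits in the metrizable space $[0,1]^{\cD}\times[0,1]$. Where you differ is in how you justify the almost-sure transitivity of $G(n,W)$. The paper gets it by appealing to Janson's hereditary-property theorem (Theorem \ref{thm:hereditary}) applied to the hereditary class $\cP=\{G\in\cT : t(\sC_3,G)=0\}$; you instead argue directly, using Lemma \ref{lem:intform} to conclude that the nonnegative integrand $W(x,y)W(y,z)W(z,x)$ vanishes $\mu^3$-a.e., noting that the three edge indicators for a triple of distinct vertices are conditionally independent given the sample points so the probability of any fixed directed triangle is exactly this integral, and finishing with a union bound and Theorem \ref{thm:acyclictournament}. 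Your route is more self-contained and in fact sidesteps a small wrinkle in the paper's argument, namely that invoking Theorem \ref{thm:hereditary} requires first knowing $\Gamma\in\widehat{\cP}$, which does not follow from $t(\sC_3,\Gamma)=0$ quite as immediately as the paper's phrasing suggests; the cost is that you rely on the explicit integral representation of $t(\sC_3,\cdot)$ rather than on a general black-box theorem. Both arguments are sound, and you correctly flag the hereditary-class alternative as equivalent.
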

\begin{proof}
Suppose that $\Gamma$ is representable by the tournament kernel $W$ and recall the construction of the random graphs $G(n,W)$ in Section \ref{sec:graphlimits}. As mentioned, Diaconis and Janson \cite{DiaconisJanson} show that $G(n,W)\to W$ almost surely; see also \cite{Boeckner}. Since the family of tournament $\cP=\{G\in \cT \ : \ t(\sC_3,G)=0 \}$ of transitive tournaments is hereditary, by Theorem \ref{thm:hereditary} we know that $t(\sC_3,G(n,W))=0$ a.s. Therefore,  Lemma \ref{lem:conv} implies that $G(n,W)\to \mathbbm{1}_{\{x\leq y\}}$. But this means that $W$ is equivalent to $W_T(x,y)=\mathbbm{1}_{\{x\leq y\}}$, so $\Gamma$ can be represented by $W_T$.
\end{proof}

Inspired by Theorem \ref{thm:acyclictournament} we prove the following theorem that completely characterises transitive tournament kernels.

\begin{theorem} \label{thm:acyclic}
 Let $W:(\cS^2,\mu)\to [0,1]$ be a tournament kernel. Then the following statements are equivalent.
\begin{enumerate}[(1)]
 \item $W$ is transitive. \label{thm:acyclic1}
 \item $W$ is acyclic, i.e. $t(\sC_k,W)=0$ for all $k\geq 3$. \label{thm:acyclic2}
 \item $t(\sC_3,W)=0$. \label{thm:acyclic3}
 \item $W$ is equivalent to $W_T:([0,1]^2, \Leb)\to [0,1]$ given by $W_T(x,y)=\mathbbm{1}_{\{x\leq y\}}$.
 \label{thm:acyclic4} 
  \item $\int_{\cS}\left( \int_{\cS} W(x,y)d\mu(y)\right)^2 d\mu(x)=\frac{1}{3}$. \label{thm:acyclic5}
  \item For all $k\geq 1$, $t(\sP_k,W)=\frac{1}{k!}$. \label{thm:acyclic6}
  \item $t(\sP_3,W)=1/6$. \label{thm:acyclicP3}
  \item For all $k\geq 1$, $t(\sT_k,W)=\frac{1}{k!}$. \label{thm:acyclic7}
  \item $t(\sT_3,W)=1/6$. \label{thm:acyclicT3}
\end{enumerate}
\end{theorem}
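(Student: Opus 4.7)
The plan is to use condition (4) — that $W$ is equivalent to $W_T(x,y)=\mathbbm{1}_{\{x\leq y\}}$ on $([0,1]^2,\Leb)$ — as the hub of the theorem. The implication $(3)\Rightarrow(4)$ is supplied by Lemma \ref{lem:weakiso}, and in the reverse direction a short computation on the explicit kernel $W_T$ gives
\[
  t(\sC_k,W_T)=0,\qquad t(\sP_k,W_T)=t(\sT_k,W_T)=\tfrac{1}{k!},\qquad \int_0^1(1-x)^2\,dx=\tfrac{1}{3},
\]
which yields $(4)\Rightarrow(2),(5),(6),(8)$, while $(2)\Rightarrow(3)$, $(6)\Rightarrow(7)$ and $(8)\Rightarrow(9)$ are tautological. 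The equivalence $(1)\Leftrightarrow(3)$ is essentially definitional: since $t(\sC_3,W)=\int W(x,y)W(y,z)W(z,x)\,d\mu^3$, vanishing of this integral says exactly that for a.e.\ $(x,y,z)$ at least one of $W(x,y),W(y,z),W(z,x)$ is zero, and via $W(x,z)+W(z,x)=1$ a.e.\ this is exactly the transitivity of $W$.

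What remains is to close the loop by showing $(5),(7),(9)\Rightarrow(3)$. Setting $r(x):=\int_\cS W(x,y)\,d\mu(y)$ and repeatedly invoking $W(x,y)+W(y,x)=1$ a.e., I would establish the three identities
\[
  \int_\cS r(x)^2\,d\mu(x)=2\,t(\sT_3,W),\qquad t(\sP_3,W)=\tfrac{1}{2}-\int_\cS r(x)^2\,d\mu(x),\qquad t(\sP_3,W)=t(\sT_3,W)+t(\sC_3,W).
\]
The first is obtained by splitting $1=W(y,z)+W(z,y)$ inside $\int W(x,y)W(x,z)\,d\mu^3=\int r^2$ and relabelling the dummy variables $y\leftrightarrow z$; the second by Fubini, using $\int_\cS W(x,y)\,d\mu(x)=1-r(y)$ together with $\int r=\tfrac{1}{2}$; the third by splitting $1=W(x,z)+W(z,x)$ inside the definition of $t(\sP_3,W)$. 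Combining them yields
\[
  t(\sC_3,W)=\tfrac{1}{2}-\tfrac{3}{2}\int_\cS r^2\,d\mu=\tfrac{3}{2}\,t(\sP_3,W)-\tfrac{1}{4}=\tfrac{1}{2}-3\,t(\sT_3,W),
\]
so that each of $\int r^2=\tfrac{1}{3}$, $t(\sP_3,W)=\tfrac{1}{6}$ and $t(\sT_3,W)=\tfrac{1}{6}$ forces $t(\sC_3,W)=0$, closing the cycle.

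I do not expect any individual step to be a serious obstacle: the proof skeleton closely mirrors that of the corresponding finite-tournament result (Theorem \ref{thm:acyclictournament}), with Lemma \ref{lem:weakiso} providing the one genuinely new ingredient needed to promote $t(\sC_3,W)=0$ to the structural statement that $W$ is equivalent to $W_T$. The fiddliest part is the bookkeeping of null sets — making sure the exceptional sets coming from $W+W^T=1$ a.e., from transitivity, and from the vanishing of the integrands are handled consistently in the $(1)\Leftrightarrow(3)$ argument and in the Fubini rearrangements above; beyond that the proof is routine integration against $W_T$ together with the three identities.
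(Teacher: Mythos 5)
Your proposal is correct and follows essentially the same route as the paper: Lemma \ref{lem:weakiso} supplies $(3)\Rightarrow(4)$, direct computation on $W_T$ handles the implications out of $(4)$, and the closing of the loop rests on the same algebraic identities relating $t(\sC_3,W)$, $t(\sP_3,W)$, $t(\sT_3,W)$ and $\int_\cS r^2\,d\mu$ via $W(x,y)+W(y,x)=1$ a.e. (your three identities are exactly the relations $t(\sP_3,W)=\tfrac12-\int r^2$, $t(\sC_3,W)=t(\sP_3,W)-t(\sT_3,W)$ and $t(\sC_3,W)=\tfrac32 t(\sP_3,W)-\tfrac14$ used in the paper). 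All stated identities check out, so no gap.
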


\begin{proof}

It is trivial to show \ref{thm:acyclic2} $\Longrightarrow$ \ref{thm:acyclic3} $\Longleftrightarrow$ \ref{thm:acyclic1} and \ref{thm:acyclic6} $\Longrightarrow$ \ref{thm:acyclicP3} and \ref{thm:acyclic7} $\Longrightarrow$ \ref{thm:acyclicT3}.

\ref{thm:acyclic3} $\Longrightarrow$ \ref{thm:acyclic4}. This follows from the proof of Lemma \ref{lem:weakiso}. 

\ref{thm:acyclic4} $\Longrightarrow$ \ref{thm:acyclic2}. This follows from the fact that 
\begin{align}
\int_{[0,1]^k} \prod_{i=1}^k \mathbbm{1}_{\{x_i\leq x_{i+1}\}}\prod_{i=1}^k d\Leb(x_i)=0,
\end{align} 
where we make the identification $x_{k+1}=x_1$.

\ref{thm:acyclic4} $\Longrightarrow$ \ref{thm:acyclic5}, \ref{thm:acyclic6}, \ref{thm:acyclic7}. Easy to check by a direct computation.

\ref{thm:acyclic5} $\Longleftrightarrow$ \ref{thm:acyclicP3}. This follows by the direct computation 
 \begin{align}
\frac{1}{3}
&=\int_{\cS}\left( \int_{\cS} W(x,y)d\mu(y)\right)^2 d\mu(x) \\
&= \int_{\cS}\left( \int_{\cS} W(x,y)d\mu(y) \int_{\cS}W(x,z) d\mu(z) \right) d\mu(x) \\
&=\int_{\cS}\left( \int_{\cS} W(x,y)d\mu(y) \int_{\cS}(1-W(z,x)) d\mu(z) \right) d\mu(x) \\
&=\int_{\cS^2} W(x,y) d\mu(y)d\mu(x)  - \int_{\cS^3}W(z,x)W(x,y) d\mu(z) d\mu (y) d\mu(x) \\
&=\frac{1}{2}-t(\sP_3,W).
\end{align}


\ref{thm:acyclic3} $\Longleftrightarrow$ \ref{thm:acyclicP3} $\Longleftrightarrow$ \ref{thm:acyclicT3}. By using the relation $W(x,z)+W(z,x)=1$ which holds almost everywhere, one obtains
\begin{align}
 t(\sC_3,W)
&=\int_{\cS^3}W(x,y)W(y,z)W(z,x)d\mu(x) d\mu(y) d\mu(z) 
\\
&= \int_{\cS^3}W(x,y)W(y,z)d\mu(x) d\mu(y) d\mu(z)  \\
& \qquad - \int_{\cS^3}W(x,y)W(y,z)W(x,z)d\mu(x) d\mu(y) d\mu(z)  \\
&=t(\sP_3,W)-t(\sT_3,W).
\end{align}
Similarly one shows that $t(\sC_3,W)=-\frac{1}{4}+\frac{3}{2}t(\sP_3,W)$. Hence the three quantities $t(\sC_3,W)$, $t(\sT_3,W)$ and $t(\sP_3,W)$ are uniquely determined by each other, and the equivalence follows.
\end{proof}

%
%
%

The transitive kernels are important special cases of several classes of tournament kernels. It can be shown that any transitive tournament kernel necessarily is $0-1$ valued. Such kernels have have been studied in the undirected case and been been called \emph{random--free}, see e.g. \cite{HatamiNorine2013, LovaszSzegedy2010}. This name stems from the fact that the random graphs $G(n,W)$ depend only on the choice of random points $X_1,X_2,\dots, X_n$ without further randomness.

Transitive kernels are also special cases of so--called directed threshold graphs. These are digraphs $G$, with a vertex weight function $f:\{1,\dots, v(G)\}^2 \to \R^+$, for which $(i,j)\in E(G)$ if and only if $f(i)<f(j)$ and $f(i)+f(j)>t$ , where $t$ is some fixed constant. Choosing $t=0$ and $f(i)=i$ for each $i\in \{1,\dots, v(G)\}$ we retrieve the transitive tournaments as a special case of directed threshold graphs. Limits of directed threshold graphs where studied in \cite{Boeckner}, as an extension to the paper \cite{DiaconisHolmesJanson} that considered the limits of (undirected) threshold graphs.

The transitive kernel is also an example of a \emph{finitely forcible} kernel. These are kernels which are determined (up to equivalence) by specifying a finite number of homomorphism numbers, i.e. specifying that $t(F_i,W)=a_i$ for some finite set $\{(a_i,F_i) \}_{i=1}^n\subseteq \R\times \cD$. We have shown in Theorem \ref{thm:acyclic} that $\{(0,\sC_3)\}$, $\{(1/6,\sP_3)\}$ and $\{(1/6,\sT_3)\}$ are finite forcing families for the transitive kernel. Undirected finitely forcible kernels were studied by Lovász and Szegedy \cite{LovaszSzegedy2011}, and it is possible that some of their results extend to the setting of digraph limits or tournament limits.


\section{Direct sums and decompositions of tournament kernels}
\label{sec:kerneldecomp}

A \emph{direct sum} of tournament kernels consists of the following objects.
\begin{itemize}
 \item a countable set $\cQ$,
 \item a non--atomic probability space $(\cI,\mu_{\cI})$,
 \item non--negative constants $(\alpha_i)_{i\in \cQ}$ such that $\sum_{i\in \cQ}\alpha_i \leq 1$,
 \item tournament kernels $(W_i:(\cS_i,\mu_i)\to [0,1])_{i\in \cQ}$, and 
 \item an map $\eta:\cQ \cup \cI \to [0,1]$ such that $\eta|{\cQ}$ is injective and $\eta|_{\cI}$ is injective up to $\mu_{\cI}$--null sets.
\end{itemize}
The direct sum consisting of these objects is denoted $\left(\bdirsum{\alpha_iW_i}{i\in \cQ},\cI,\eta \right)$, and it is equivalent to the kernel $W:(\cS^2,\mu)\to [0,1]$, where
\begin{align}
 \cS=\cI \sqcup \left(\bigsqcup_{i\in \cQ}\cS_i \right),
\end{align}
and
\begin{align}
\mu=\sum_{i\in\cQ}\alpha_i\mu_i+\left(1-\sum_{i\in\cQ} \alpha_i \right)\mu_{\cI},
\end{align} 
defined by 
\begin{align}
 W(x,y) = 
\begin{cases} W_i(x,y) , &x,y\in \cS_i, \ i \in \cQ, \\
0, & \eta(x)<\eta(y),  \\
1, & \eta(x)>\eta(y).
\end{cases}
\end{align}
where we have abused notation and defined $\eta(x):=\eta(i)$ if $x\in \cS_i$, $i\in \cQ$.

Note that $W(x,y)$ can be defined arbitrarily on the set $\{(x,y)\in \cI^2 \ : \ \eta(x)=\eta(y)\}$, since this is a $\mu\times \mu$--null subset of $\cS^2$. Therefore $W$ is well--defined $\mu\times \mu$--a.e. We might think of the above as attaching kernels to each point of the countable set $\cQ$, with trivial kernels (consisting of a single point) attached to each point of $\cI$. The map $\eta$ should be thought of as a linear ordering of $\cQ\cup \cI$. 

Notation will be abused in a few ways. First, if $\sum_{i\in \cQ}\alpha_i=1$, then the measure $\mu$ has no support on $\cI$, so in this case we will drop $\cI$ from the notation altogether.
Second, particularly when $\sum_{i\in \cQ}\alpha_i=1$, we might drop $\eta$ from the notation and simply write $\bdirsum{\alpha_iW_i}{i\in \cQ}$. In this case, one might think of $\cQ$ as an ordered set, with the order being the one induced by $\eta$.

If $W$ and $W'$ are kernels and $r\in (0,1)$, we denote by $rW\dirsum{}{}(1-r)W'$ the direct sum with $\cQ=\{0,r\}$, $\alpha_0=r$, $\alpha_r=(1-r)$, $W_0=W$, $W_r=W'$ and $\eta : \cQ \to [0,1]$ the identity map. Note that $\alpha_0+(1-\alpha_r)=1$, so we may ignore $(\cI,\mu_{\cI})$ in this case.

\begin{remark} 
 If the probability spaces $(\cI,\mu_{\cI})$ and $(\cS_i, \mu_{i})$ are assumed to be standard, then there exists measure isomorphisms $(\cI,\mu_{\cI})\to ([0,1],\Leb)$ and $(\cS_i, \mu_{i}) \to ([0,1],\Leb)$. Rescaling using the weights $\alpha_i$ and $1-\sum_{i\in \cQ}\alpha_i$, the map $\eta$ provides a way of realising the disjoint union $\cI \sqcup \left(\bigsqcup_{i\in \cQ}\cS_i \right)$ as a partition of $[0,1]$. An example of this is shown in Figure \ref{fig:decomposedkernel}.\label{rem:embed}
\end{remark}

\begin{figure}
\centering 
\includegraphics[width=0.6\textwidth]{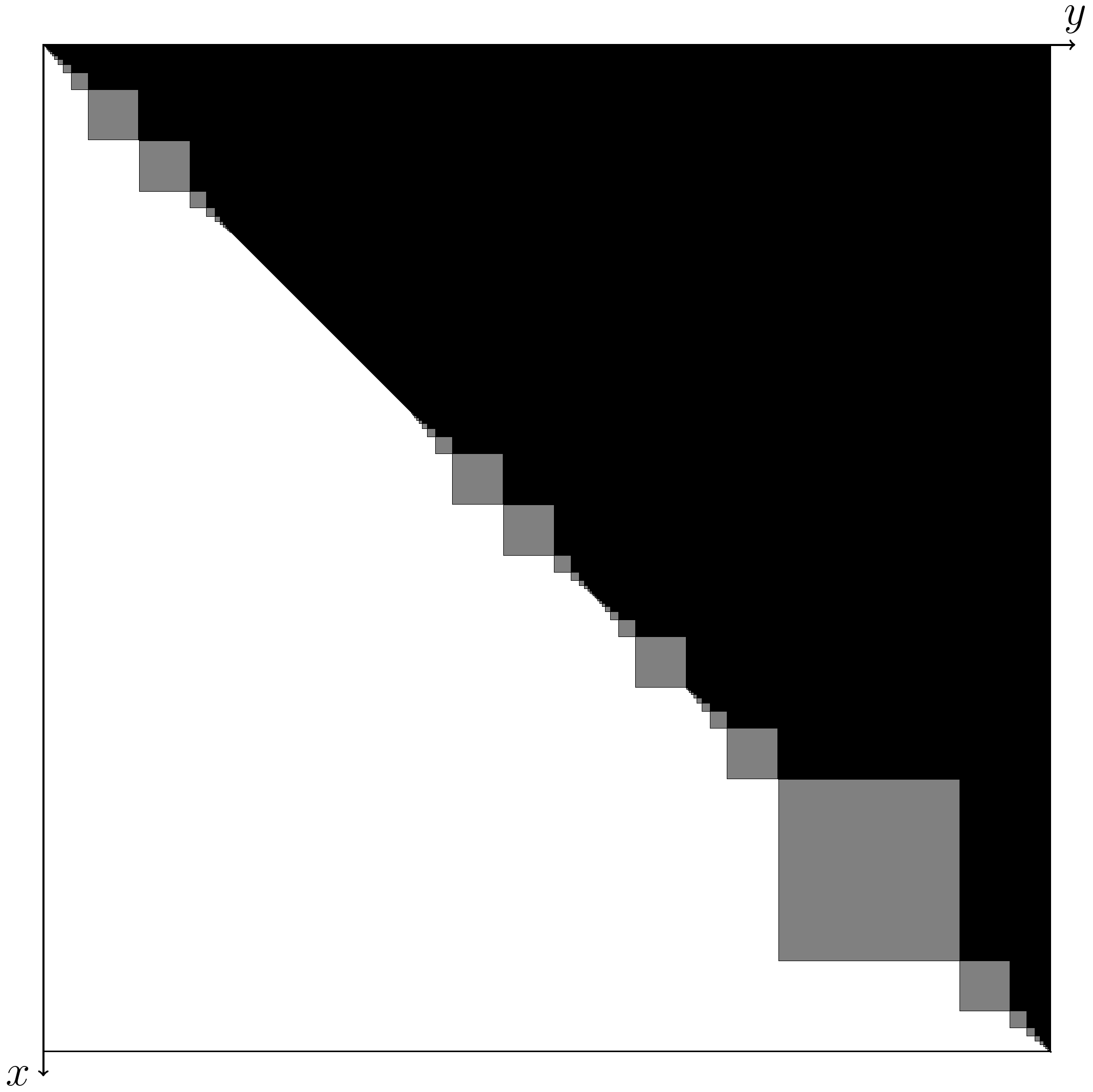}
\caption{A representation of the direct sum of a countable number of kernels. Each grey square can be seen as a kernel $[0,1]^2 \to [0,1]$, rescaled according to the weights $(\alpha_i)_{i\in \cQ}$. The kernel equals $1$ on the black region and $0$ on the white region. }
\label{fig:decomposedkernel}
\end{figure}

%
%
%

From now on, fix some arbitrary probability space $(\cS,\mu)$ and a tournament kernel $W:(\cS^2,\mu)\to [0,1]$. Our aim is to prove that $W$, $\cS$ and $\mu$ can be simultaneously decomposed into a direct sum like above. The proof follows the following steps. First we introduce an order relation $\prec$ on subsets of the set $\cS$. Using this we define a certain $\sigma$--algebra $\sigma(\cZ)$. This will define a probability space $(\cS,\sigma(\cZ),\mu)$, where the atoms  correspond to irreducible subkernels labelled by $\cQ$, and the non--atomic part to $\cI$. This will require some technical results, but after this is done we are able to show that the non--atomic part of $\sigma(\cZ)$ can be interlaced, via a map $\Lambda:\cS\to [0,1]$, in the ordering defined by the atoms of $\sigma(\cZ)$. This will give us our decomposition result. Note that the following notions typically are defined with respect to $W$, but we will suppress this dependence in our notation.

\begin{definition}
 Let $W$ be a tournament kernel on a probability space $(\cS,\mu)$. Let $A,B\subseteq \cS$ be measurable subsets with $\mu(A\cap B)=0$. We write $A\prec B$ if and only if $W(x,y)=1$ for $\mu$--a.e. $(x,y)\in A\times B$. Furthermore, we write $A\preceq B$ if and only if $A\prec B$ or $A=B$ $\mu$--a.e.
\end{definition}

We will often use without mention the fact that $\prec$ is closed under restriction to subsets, i.e. if $A'\subseteq A, B'\subseteq B$ and $A\prec B$, then $A'\prec B'$, and that $\prec$ is anti--symmetric in the sense that $A\prec B$ and $B\prec A$ if and only if $\mu(A)=0$ or $\mu(B)=0$.

\begin{definition}\label{def:irreducible}
 Let $W:\cS^2\to [0,1]$ be a tournament kernel. We say that $W$ is \emph{reducible} if there exists a measurable $B\subseteq \cS$ with $0<\mu(B)<1$ such that $B\prec \cS\setminus B$.
 Otherwise $W$ is \emph{irreducible}. 
\end{definition}

Equivalently, $W$ is reducible if and only if there exists $r\in (0,1)$ and tournament kernels $W_1,W_2$ such that $W=r W_1 \dirsum{}{}(1-r)W_2$.

Each irreducible tournament gives rise to a family of irreducible tournament kernels in the following way. Consider any irreducible tournament $G=(V(G),E(G))$, where $V(G)=\{1,2,\dots, n\}$, $n\geq 3$, and arbitrary tournament kernels $(W_i:[0,1]^2, \Leb) \to [0,1])_{i\in V(G)}$. Define the tournament kernel $W:([0,1]^2,\Leb)\to [0,1]$ by 
\begin{align}
 W(x,y) = 
\begin{cases} W_i\left(nx-i,ny-i \right) &\text{ if } (x,y)\in \bigcup_{i=1}^n \left(\frac{i-1}{n},\frac{i}{n} \right)^2 \\
 1 &\text{ if } x\in \left(\frac{i-1}{n},\frac{i}{n} \right), y \in \left(\frac{j-1}{n},\frac{j}{n} \right), i\neq j \text{ and }  ij \in E(G) \\
 0 & \text{ if } x\in \left(\frac{i-1}{n},\frac{i}{n} \right), y \in \left(\frac{j-1}{n},\frac{j}{n} \right), i\neq j \text{ and }  ij \notin E(G). \\
\end{cases}
\end{align}
This is well--defined $\mu\times \mu$--a.e., and it is not difficult to show that $W$ is an irreducible tournament kernel. Figure \ref{fig:irreduciblekernel} shows such an example where $G=\sC_3$.

\begin{figure}
\centering
\includegraphics{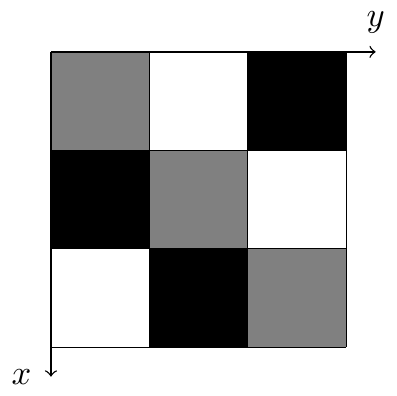}
\caption{Let $W:([0,1]^2,\mu)\to [0,1]$ be equal to $1$ on the black regions, equal to $0$ on the white regions, and equal to $1/2$ on the grey regions. This is an example of an irreducible tournament kernel. The values on the edges (constituting a null set) may be assigned arbitrarily. }
\label{fig:irreduciblekernel}
\end{figure}

\begin{definition}
 Let $\cZ$ be the set of all measurable $A\subseteq \cS$ for which there is a partition $\{B,C\}$ of $\mathcal{S}\setminus A$ with $B\prec A, A\prec C$ and $B\prec C$. When this happens we say that $(B,C)$ witnesses $A\in \cZ$. We allow $A=\emptyset, B=\emptyset$ or $C=\emptyset$. 
\end{definition}
Note that $\emptyset,\cS \in \cZ$ and that if $(B,C)$ witnesses that $A\in \cZ$, then also $B\in \cZ$ and $C\in \cZ$. The elements of $\cZ$ are sometimes called ``intervals'', see for instance \cite{CourcelleDelhomme2008}. Since $A\prec B \Rightarrow A'\prec B'$ if $A=A'$ and $B=B'$ a.e., we have that $A\in \cZ \Rightarrow A'\in \cZ$ if $A=A'$ a.e.. The next lemma shows that $\cZ$ is closed under countable intersections. 

\begin{lemma}\label{lem:semialg}
 $\cZ$ is a semi--algebra that is closed under countable intersection.
\end{lemma}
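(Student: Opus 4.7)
The plan is to verify the three axioms for a semi-algebra---containing $\emptyset$ and $\cS$, closure under finite intersection, and complements being finite disjoint unions of elements of $\cZ$---and then sharpen the intersection property from finite to countable. Two of these are essentially free from what has already been recorded: $\emptyset,\cS\in\cZ$ is noted, and for any $A\in\cZ$ with witness $(B,C)$ one has $\cS\setminus A=B\sqcup C$ with $B,C\in\cZ$ (witnessed respectively by $(\emptyset,A\cup C)$ and $(A\cup B,\emptyset)$, where the required $\prec$-relations are either vacuous or immediate from $B\prec A$, $B\prec C$ and $A\prec C$). The real content thus lies in exhibiting a witness for a countable intersection, which will simultaneously give finite intersections.

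Given $(A_n)_{n\geq 1}\subseteq\cZ$ with witnesses $(B_n,C_n)$, the natural candidate is $A=\bigcap_n A_n$, $B=\bigcup_n B_n$ and $C=\cS\setminus(A\cup B)$. Measurability and the disjoint identity $\cS\setminus A=B\sqcup C$ are immediate, so what remains is to verify the three $\prec$-relations of the proposed witness.

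The easy relation is $B\prec A$: since $B_n\prec A_n$ and $A\subseteq A_n$ for each $n$, the exceptional set $\{(x,y)\in B\times A : W(x,y)\neq 1\}$ is a countable union of $\mu$-null sets indexed by $n$, hence $\mu$-null. For $A\prec C$, the first step is to observe that any $x\in C$ lies in some $C_m$: indeed $x\notin A$ forces $x\notin A_m$ for some $m$, and $x\notin B\supseteq B_m$ together with the trichotomy $\cS=A_m\sqcup B_m\sqcup C_m$ (valid up to a null set) leaves only $x\in C_m$. Combined with $A\subseteq A_m$ and $A_m\prec C_m$, the bad set for $W(y,x)=1$ on $A\times C$ is contained in $\bigcup_m\bigl[(A_m\times C_m)\cap\{W\neq 1\}\bigr]$ and is therefore $\mu$-null.

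I expect the main obstacle to be the last relation, $B\prec C$, because the index $n$ with $x\in B_n$ need not agree with the index $m$ located above with $y\in C_m$. The trick is to apply the trichotomy for the index $n$ to the point $y$ as well: the case $y\in B_n$ is excluded by $y\in C$ together with $B_n\subseteq B$, so either $y\in A_n$ (whence $W(x,y)=1$ a.e.\ by $B_n\prec A_n$) or $y\in C_n$ (whence $W(x,y)=1$ a.e.\ by $B_n\prec C_n$). Aggregating the resulting countable collection of null exceptional sets over $n$ yields $B\prec C$, which completes the verification that $A\in\cZ$ and hence finishes the proof.
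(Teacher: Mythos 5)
Your proof is correct and follows essentially the same strategy as the paper's: the witness for $\bigcap_n A_n$ is built by taking the union of the $B_n$'s, and each of the three $\prec$-relations is reduced to the component relations via countable unions of null sets (your choice of witness $(B,\cS\setminus(A\cup B))$ differs from the paper's $(B\setminus C,\, \bigcup_n C_n)$ only in where the overlap is assigned, which is immaterial). The semi-algebra axioms are likewise handled the same way, via the observation that a witness pair for $A\in\cZ$ itself consists of elements of $\cZ$.
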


\begin{proof}
Let us first prove closure under countable intersection. Suppose $A_i\in \cZ$ for $i\geq 1$, and suppose that $(B_i,C_i)$ witnesses that $A_i\in \cZ$ for $i\geq 1$. Let $A=\bigcap_{i=1}^{\infty}A_i, B=\bigcup_{i=1}^{\infty}B_i$ and $C=\bigcup_{i=1}^{\infty}C_i$. We claim that $(B\setminus C,C)$ witnesses that $A\in \cZ$.

For any $i\geq 1$, we have that $B_i\prec A_i$, which implies $B_i\prec A$, so $B\prec A$. Hence $B\setminus C\prec A$. Similarly $A\prec C$.  Now, 
\begin{align}
B\setminus C
=\bigcup_{i=1}^{\infty}B_i \setminus \bigcup_{i=1}^{\infty}C_i 
=\bigcup_{i=1}^{\infty}B_i \cap (\cS \setminus \bigcup_{i=1}^{\infty}C_i) 
&=\bigcup_{i=1}^{\infty}B_i \cap \bigcap_{i=1}^{\infty}(\cS\setminus C_i) \\
&=\bigcup_{i=1}^{\infty}B_i \cap \bigcap_{i=1}^{\infty}(A_i\cup B_i) \\
&\subseteq \bigcap_{i=1}^{\infty}(A_i\cup B_i)\\
&\prec \cup_{i=1}^{\infty}C_i =C.
\end{align}
It remains to show that $\{B\setminus C, A, C \}$ is a partition of $\cS$ into disjoint sets. First, note that
\begin{align}
\cS \setminus A= \cS\setminus \bigcap_{i=1}^{\infty}A_i=\bigcup_{i=1}^{\infty}(\cS\setminus A_i)=\bigcup_{i=1}^{\infty}(B_i\cup C_i)=B\cup C.
\end{align}
This implies that they cover $\cS$, and moreover that $A\cap C=\emptyset$ and $A\cap B=\emptyset$. Hence also $A\cap (B\setminus C)=\emptyset$. The obvious fact $(B\setminus C)\cap C=\emptyset$ now shows the elements of the partition are pairwise disjoint. 
%

We mentioned already that $\emptyset,\cS \in \cZ$, and the above shows also that $\cZ$ is closed under intersection. To verify that $\cZ$ is a semi--algebra it remains to show that the set difference of any of its elements is a finite union of union of disjoint elements in $\cZ$. Suppose $(A_1,A_2)$ and $(B_1,B_2)$ witness that $A,B\in \cZ$ respectively. Then one can verify that $B\setminus A = (B\cap A_1)\cup (B\cap A_2)$, which is a union of pairwise disjoint elements in $\cZ$. This completes the proof.
\end{proof}


The following lemma allows us to approximate arbitrary $\sigma(\cZ)$--measurable events by a finite union of pairwise disjoint elements in $\cZ$. We denote by $A\triangle B := (A\cup B)\setminus (A\cap B)$ the symmetric difference between $A$ and $B$.

 \begin{lemma}\label{lem:approx}
Given any $B\in \sigma(\cZ)$ and any $\varepsilon>0$, there exists pairwise disjoint $A_1,\dots, A_n\in \cZ$ such that $\mu(B \bigtriangleup \bigcup_{i=1}^nA_i)<\varepsilon$.
\end{lemma}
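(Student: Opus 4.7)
The plan is to apply the standard measure-theoretic fact that any element of a $\sigma$-algebra can be approximated in symmetric difference by the algebra that generates it. By Lemma \ref{lem:semialg}, $\cZ$ is a semi-algebra, so a standard construction shows that the collection
\begin{equation}
\cA:=\left\{\bigcup_{i=1}^n A_i \ : \ n\geq 1, \ A_1,\dots, A_n\in \cZ \text{ pairwise disjoint}\right\}
\end{equation}
is precisely the algebra generated by $\cZ$. In particular $\sigma(\cA)=\sigma(\cZ)$, and the conclusion of the lemma is exactly that every $B\in\sigma(\cZ)$ can be approximated by some $A\in \cA$.

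Concretely, I would verify that $\cA$ is an algebra by noting that finite unions of $\cA$-elements are again finite disjoint unions of $\cZ$-elements (use intersections in $\cZ$ to disjointify, exploiting that $\cZ$ is closed under intersection), and complements of $\cZ$-elements lie in $\cA$ by the semi-algebra property (Lemma \ref{lem:semialg}), so complements of $\cA$-elements also lie in $\cA$ by De Morgan and the intersection-closure.

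Next I would let
\begin{equation}
\cM := \{B\in \sigma(\cZ) \ : \ \text{for every }\varepsilon>0\text{ there is }A\in \cA\text{ with } \mu(B\bigtriangleup A)<\varepsilon\},
\end{equation}
and check that $\cM$ is a $\sigma$-algebra containing $\cA$. Containment of $\cA$ is immediate. Closure under complements follows from $\mu(B^c\bigtriangleup A^c)=\mu(B\bigtriangleup A)$. Closure under countable unions uses a standard truncation argument: if $B=\bigcup_{n\geq 1}B_n$ with $B_n\in \cM$, choose $N$ large enough that $\mu\bigl(B\setminus \bigcup_{n=1}^N B_n\bigr)<\varepsilon/2$ (possible by continuity of $\mu$ from below), then approximate each $B_n$, $n\le N$, within $\varepsilon/(2N)$ by some $A_n\in\cA$; the union $\bigcup_{n=1}^N A_n$ lies in $\cA$ (since $\cA$ is an algebra) and approximates $B$ within $\varepsilon$. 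Thus $\cM\supseteq \sigma(\cA)=\sigma(\cZ)$, giving the claim.

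There is no real obstacle here; the only point requiring a bit of care is the routine verification that finite disjoint unions of semi-algebra elements form an algebra, and this is where Lemma \ref{lem:semialg} is used essentially. Everything else is a word-for-word application of the approximation lemma standard in any measure theory text.
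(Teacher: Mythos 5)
Your proof is correct and is essentially the same argument the paper relies on: the paper simply cites the proof of Theorem 1.18 in Lieb--Loss, which is precisely the standard semi-algebra-to-algebra-to-$\sigma$-algebra approximation argument you have written out (using Lemma \ref{lem:semialg} in the same essential way). No issues.
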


\begin{proof}
This follows from \cite[proof of Theorem 1.18]{LiebLoss2001}; it suffices that $\cZ$ is a semi--algebra, which follows from Lemma \ref{lem:semialg}.
\end{proof}

An element $A\in \sigma(\cZ)$ is said to be an \emph{atom} if there does not exist a set $A'\in \sigma(\cZ)$ with $A'\subseteq A$ and $0<\mu(A')<\mu(A)$. By a general result, the probability space $(\cS, \sigma(\cZ), \mu)$ has a decomposition into countably many atoms $S_1,\dots, S_N$ for some $N=0,1,\dots, \infty,$ and a non--atomic part $\cI$ (which we may define as the complement of the union of all atoms in $\cS$). 

We will study the atomic and non--atomic part of $(\cS,\sigma(\cZ),\mu)$, starting with the former. First, we show that any atom must be an element of $\cZ$; this is a more general result which follows from Lemma \ref{lem:semialg} and Lemma \ref{lem:approx}. Second, we show that the restriction of $W$ to any atom results in an irreducible subkernel. Third, we show that $\preceq$ induces a linear order on the atoms. After these three results we turn our attention to the non--atomic part $\cI$.

\begin{lemma}\label{lem:atom}
Let $A$ be an atom in $(\cS,\sigma(\cZ),\mu)$. Then $A\in \cZ$.
\end{lemma}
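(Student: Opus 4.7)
The plan is to combine the approximation result in Lemma \ref{lem:approx} with the fact that $\cZ$ is closed under countable intersections (Lemma \ref{lem:semialg}) and under a.e.-equivalence. The strategy is to show that $A$ agrees a.e.\ with an intersection of elements of $\cZ$, each of which tightly contains $A$.

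First, I would fix $\varepsilon\in(0,\mu(A))$ and invoke Lemma \ref{lem:approx} to produce pairwise disjoint $A_1,\dots,A_n\in\cZ$ with $\mu\bigl(A\triangle\bigcup_{i=1}^n A_i\bigr)<\varepsilon$. Since every $A\cap A_i$ lies in $\sigma(\cZ)$ and is contained in the atom $A$, the atom property forces $\mu(A\cap A_i)\in\{0,\mu(A)\}$ for each $i$. Disjointness gives $\sum_i\mu(A\cap A_i)=\mu\bigl(A\cap\bigcup_iA_i\bigr)\leq\mu(A)$, while the approximation forces $\mu\bigl(A\cap\bigcup_iA_i\bigr)>\mu(A)-\varepsilon>0$. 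Hence exactly one index $i^\ast$ satisfies $\mu(A\cap A_{i^\ast})=\mu(A)$, i.e.\ $A\subseteq A_{i^\ast}$ a.e.; in particular $\mu(A_{i^\ast}\setminus A)<\varepsilon$.

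Applying this with $\varepsilon_k=\min(1/k,\mu(A)/2)$ yields a sequence $(A_k^\ast)_{k\geq 1}$ in $\cZ$ with $A\subseteq A_k^\ast$ a.e.\ and $\mu(A_k^\ast\setminus A)<1/k$. Set $A^\ast=\bigcap_{k\geq 1}A_k^\ast$. By Lemma \ref{lem:semialg}, $A^\ast\in\cZ$. Moreover $A\subseteq A^\ast$ a.e., and $\mu(A^\ast\setminus A)\leq\mu(A_k^\ast\setminus A)<1/k$ for every $k$, so $\mu(A^\ast\setminus A)=0$ and therefore $A=A^\ast$ a.e. Since $\cZ$ is invariant under a.e.-equivalence (as noted immediately after the definition of $\cZ$), it follows that $A\in\cZ$.

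There is no substantial obstacle beyond bookkeeping: the only subtle point is the atom argument that forces \emph{exactly} one of the $A_i$ to contain $A$ a.e., which is needed in order for the intersection $A^\ast$ over the approximating sequence to still agree with $A$ a.e.\ rather than shrink to something strictly smaller. Everything else is a clean application of the two preceding lemmas.
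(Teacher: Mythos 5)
Your proof is correct and follows essentially the same route as the paper's: approximate the atom by finitely many disjoint elements of $\cZ$, use the atom property to isolate the unique piece containing $A$ a.e., intersect over $k$ using closure of $\cZ$ under countable intersections, and conclude by a.e.-invariance of $\cZ$. Your version is in fact slightly more careful, since you explicitly require $\varepsilon<\mu(A)$ so that at least one $A\cap A_i$ has positive (hence full) measure, a point the paper passes over silently.
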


\begin{proof}
Let $A$ be an atom in $(\cS,\sigma(\cZ),\mu)$. By Lemma \ref{lem:approx}, for any $k>1$ there exist some $n=n(k)$ and mutually disjoint $A^{(k)}_1,\dots, A^{(k)}_{n}\in \cZ$ such that $\mu\left(A \triangle \bigcup_{i=1}^{\infty} A^{(k)}_i\right) < 1/k$. Since the $A^{(k)}_i$ are disjoint and $A$ is an atom, $\mu(A^{(k)}_i\cap A)=\mu(A)$ for exactly one $i$, while $\mu(A^{(k)}_i\cap A)=0$ for all other $i$. Assume without loss of generality that $\mu(A^{(k)}_1\cap A)=\mu(A)$. Then
\begin{align*}
 \mu\left(A \triangle A_1^{(k)} \right) \leq  \mu\left(A \triangle \bigcup_{i=1}^{\infty} A^{(k)}_i\right) < 1/k.
\end{align*}
By closure under countable intersections, $A':=\bigcap_{k=1}^{\infty}A_1^{(k)}$ is an element of $\cZ$. One verifies that $\mu(A\triangle A')=0$ by a limit argument. Since $A$ is a.e. equal to an element of $\cZ$, we have $A\in \cZ$, as desired.
\end{proof}

\begin{lemma}
 Let $A$ be an atom of $(\cS,\sigma(\cZ),\mu)$. Then the restriction $W|_{A}:\left(A^2, \frac{1}{\mu(A)}\mu\right)\to [0,1]$ is an irreducible tournament kernel. 
\end{lemma}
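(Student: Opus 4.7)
My plan is to proceed by contradiction: assume $W|_A$ is reducible, and use the reducing set together with the witness that $A \in \cZ$ to construct a strictly smaller $\sigma(\cZ)$-measurable subset of $A$ of positive measure, contradicting the assumption that $A$ is an atom.

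In more detail, I would first record that $W|_A$ is indeed a tournament kernel on $\bigl(A^2, \tfrac{1}{\mu(A)}\mu|_{A\times A}\bigr)$, since the a.e. identity $W(x,y)+W(y,x)=1$ restricts to $A\times A$. If $W|_A$ were reducible, there would be a measurable $B\subseteq A$ with $0<\mu(B)<\mu(A)$ and $B\prec A\setminus B$ (with respect to the ambient kernel $W$, since $\prec$ only depends on pointwise values of $W$). By Lemma \ref{lem:atom}, $A\in \cZ$, so there is a partition $(P,Q)$ of $\cS\setminus A$ with $P\prec A$, $A\prec Q$ and $P\prec Q$.

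The key step is to verify that $(P,\,(A\setminus B)\cup Q)$ witnesses $B\in \cZ$. Closure of $\prec$ under passage to subsets immediately gives $P\prec B$ (from $P\prec A$ and $B\subseteq A$) and $B\prec Q$ (from $A\prec Q$ and $B\subseteq A$). Combining $B\prec A\setminus B$ with $B\prec Q$ yields $B\prec (A\setminus B)\cup Q$, while $P\prec A\setminus B$ (from $P\prec A$) together with $P\prec Q$ yields $P\prec (A\setminus B)\cup Q$. Thus $B\in \cZ\subseteq \sigma(\cZ)$, and $B$ is a $\sigma(\cZ)$-measurable subset of $A$ with $0<\mu(B)<\mu(A)$, contradicting the atomicity of $A$.

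I do not anticipate any serious obstacle here; the only small bookkeeping concern is being careful that the relation $\prec$ in the definition of reducibility of $W|_A$ is literally the restriction of the ambient $\prec$ (since $W|_A$ and $W$ agree on $A\times A$, and $\prec$ is defined purely through a.e. values of the kernel), so the local reducing set $B$ can be reinserted into the global witness $(P,Q)$ for $A$ without any loss. The whole argument is then just combinatorics of the order relation $\prec$ applied to the disjoint pieces $P$, $B$, $A\setminus B$, $Q$ of $\cS$.
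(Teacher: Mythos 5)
Your proof is correct and follows essentially the same argument as the paper: assume $W|_A$ is reducible, combine the reducing partition of $A$ with the witness that $A\in\cZ$ to exhibit a proper non--null $\sigma(\cZ)$--measurable subset of $A$, contradicting atomicity. The only cosmetic difference is that you show the lower piece $B$ lies in $\cZ$ via the witness $(P,(A\setminus B)\cup Q)$, whereas the paper shows the upper piece lies in $\cZ$ via the witness $(P\cup B, Q)$; the two are mirror images of one another.
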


\begin{proof}
Suppose for contradiction that $W|_A$ is reducible. Then there exist a partition of $A$ into non--null disjoint $A_1,A_2\subset A$ such that $A_1\prec A_2$. But $A$ is an element of $\cZ$, so there exist $B,C$ such that $(B,C)$ witnesses that $A\in \cZ$. It follows that $(B\cup A_1,C)$ witnesses that $A_2\in \cZ$, contradicting the assumption that $A$ is an atom.
\end{proof}

\begin{lemma}\label{lem:kernelorder}
The relation $\preceq$ is a linear order on the atoms $(\cS_i)_{i=1}^N$ of $(\cS,\sigma(\cZ),\mu)$.
\end{lemma}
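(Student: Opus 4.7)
The plan is to establish the four axioms of a linear order: reflexivity (immediate from the definition of $\preceq$), antisymmetry, transitivity, and totality. The key observation used throughout is that, by Lemma \ref{lem:atom}, every atom lies in $\cZ$, so for any atom $A$ there is a witness $(B,C)$ partitioning $\cS$ with $B\prec A$, $A\prec C$, and $B\prec C$; and that any atom $A'\in \sigma(\cZ)$ distinct from $A$ must, up to a $\mu$-null set, lie entirely inside $B$ or entirely inside $C$.

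For totality I would argue as follows. Fix two atoms $A_1,A_2$ and let $(B_1,C_1)$ witness $A_1\in \cZ$, so $\cS = B_1\sqcup A_1\sqcup C_1$ (a.e.). Since each of $A_2\cap B_1$, $A_2\cap A_1$, $A_2\cap C_1$ lies in $\sigma(\cZ)$ and their union is $A_2$, the atomicity of $A_2$ forces exactly one of these to have full measure in $A_2$. If $\mu(A_2\cap B_1)=\mu(A_2)$ then $A_2\subseteq B_1$ a.e., and $B_1\prec A_1$ gives $A_2\prec A_1$. Symmetrically, if $\mu(A_2\cap C_1)=\mu(A_2)$ then $A_1\prec A_2$. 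Finally, if $\mu(A_2\cap A_1)=\mu(A_2)$, then $A_2\subseteq A_1$ a.e., and atomicity of $A_1$ forces $\mu(A_1)=\mu(A_2)$, hence $A_1=A_2$ a.e. In all three cases $A_1\preceq A_2$ or $A_2\preceq A_1$.

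Antisymmetry is then almost immediate: if $A_1\prec A_2$ and $A_2\prec A_1$, the antisymmetry of $\prec$ (noted just after its definition) forces $\mu(A_1)=0$ or $\mu(A_2)=0$, which is impossible for atoms; so one of the equalities $A_1=A_2$ a.e.\ in the definition of $\preceq$ must hold. For transitivity, suppose $A_1\prec A_2$ and $A_2\prec A_3$, and let $(B_2,C_2)$ witness $A_2\in \cZ$. Applying the trichotomy of totality to the pair $(A_1,A_2)$: since $A_1\prec A_2$ forces $\mu(A_1\cap A_2)=0$, we must have $A_1\subseteq B_2$ a.e.\ or $A_1\subseteq C_2$ a.e.; the latter would give $A_2\prec A_1$ (because $A_2\prec C_2$), which combined with $A_1\prec A_2$ contradicts the non-nullity of the atoms. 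Hence $A_1\subseteq B_2$ a.e., and the identical argument applied to $A_2\prec A_3$ gives $A_3\subseteq C_2$ a.e. The relation $B_2\prec C_2$ from the witness then yields $A_1\prec A_3$.

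The main obstacle—really the only one, since once the trichotomy is in place everything else falls out—is ensuring that when an atom $A_2$ is split by the partition $(B_1,A_1,C_1)$ coming from $A_1$'s witness, the intersections are genuine $\sigma(\cZ)$-measurable subsets of $A_2$ so that atomicity applies. This is guaranteed because $B_1,A_1,C_1\in \cZ\subseteq \sigma(\cZ)$ and $\sigma(\cZ)$ is closed under intersection. After that, the argument is purely a matter of bookkeeping which side of each witness each atom lies on.
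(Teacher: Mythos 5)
Your proof is correct and follows essentially the same strategy as the paper's: use Lemma \ref{lem:atom} to obtain a witness partition for one atom, then apply atomicity of the other atom to place it (up to a null set) on one side of that partition, from which totality, antisymmetry and transitivity all follow. The only cosmetic difference is that your transitivity argument works directly with the witness of the middle atom rather than by contradiction, which if anything is slightly cleaner.
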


\begin{proof}
\textbf{Antisymmetry: } Suppose $\cS_i\prec \cS_j$ and $\cS_j\prec \cS_i$. Then $\mu(\cS_i)=0$ or $\mu(\cS_j)=0$, contradicting the fact that $\cS_i,\cS_j$ are atoms.

 \textbf{Totality: } We have $\cS_i\preceq \cS_i$ by definition. Suppose $\cS_i,\cS_j$ be distinct atoms. There exist partitions $\{A_1,A_2\}$ and $\{B_1,B_2\}$ of $\cS\setminus \cS_i$ and $\cS\setminus \cS_j$ respectively, such that $(A_1,A_2)$ witnesses $\cS_i\in \cZ$ and $(B_1,B_2)$ witnesses $\cS_j\in \cZ$. Since $\cS_i$ is an atom we have that $\mu(\cS_i\cap B_1)=0$ or $\mu(\cS_i\cap B_2)=0$. Suppose the former. Then $\mu(\cS_i\cap B_2)=\mu(\cS_i)$, i.e. $\cS_i\subseteq B_2$ a.e. But then $\cS_j\prec B_2 \Rightarrow \cS_j\prec \cS_i$. 

\textbf{Transitivity: } Let $\cS_i,\cS_j,\cS_k$ be distinct atoms and suppose $\mathcal{S}_i\prec \mathcal{S}_j$ and $\mathcal{S}_j\prec \mathcal{S}_k$. Suppose for contradiction that $\mathcal{S}_k\prec \mathcal{S}_i$. Then there exists a non--null partition $B,C\subseteq \mathcal{S} \setminus \mathcal{S}_i$ such that $\mathcal{S}_i\prec C$, $B\prec \mathcal{S}_i$, $B\prec C$. But then $\mathcal{S}_j\subseteq C$ a.e. and $\mathcal{S}_k\subseteq B$ a.e., whence $\cS_k\prec \cS_j$, a contradiction. Hence $\cS_i \prec \cS_k$.

\end{proof}

This completes our investigation of the atoms of $(\cS,\sigma(\cZ),\mu)$; we turn now our attention to the non--atomic part $\cI$. Ultimately our aim is to show that $t(\sC_3,W|_{\cI})=0$, i.e. that $W$ restricted to $\cI$ is transitive. Mainly for notational brevity, introduce a measure $\nu$ defined by
\begin{align}
 \nu(A\times B \times C):= \int_{A\times B \times C}W(x,y)W(y,z)W(z,x)d\mu(x)d\mu(y)d\mu(z) 
\end{align}
for any $A,B,C\in \cZ$. This extends to a measure on the product space $\prod_{i=1}^3 (\cS,\sigma(\cZ))$ which is absolutely continuous with respect to $\mu^3$, and $W(x,y)W(y,z)W(z,x)$ is the Radon--Nikodym derivative $d\nu / d\mu^3$. This measure is defined so that $t(\sC_3,W)=\nu(\cS^3)$, so it will suffice to show that $\nu(\cS^3)=0$.

For arbitrary measurable $A,B,C\subseteq \cS$, it holds trivially that $\nu(A\times B\times C)\leq \mu(A)\mu(B)\mu(C)$. 
If $A,B,C\in \cZ$ we can do slightly better. Namely, if $A_1,A_2,A_3\in \cZ$ are such that $\mu(A_i \cap A_j)=0$ for some $i,j=1,2,3$ with $i\neq j$, then one can show that $\nu(A_1\times A_2\times A_3)=0$. Using this fact along with a partition of $A\cup B\cup C$ into mutually disjoint elements of $\cZ$, it follows that 
\begin{align}
 \nu(A\times B\times C)\leq \mu(A\cap B\cap C)^3 \label{eq:nuineq1}
\end{align}
for any $A,B,C\in \cZ$. 

We can now prove that the restriction of $W$ to the non--atomic part $\cI$ is transitive. The idea behind the proof of this result is the following. First partition $\cI$ into many small mutually disjoint measurable subsets, each of which can be approximated like in Lemma \ref{lem:approx}. This allows us to cover $\cI$ by many small elements of $\cZ$, in such a way that their overlap is small. Using \eqref{eq:nuineq1} we see that the main contribution to $\nu(\cI^3)$ must happen ``along the diagonal'', which has small measure. Taking finer and finer partitions it follows that $\nu(\cI^3)=0$, which implies $t(\sC_3,W|_{\cI})=0$.

\begin{lemma}\label{lem:trans}
Let $\cI$ be the non--atomic part of $(\cS,\sigma(\cZ),\mu)$. Then $t(\sC_3,W|_{\cI})=0$.
\end{lemma}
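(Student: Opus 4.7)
The aim is to show $\nu(\cI^3) = 0$, which is equivalent to $t(\sC_3, W|_{\cI}) = 0$ after normalising by $\mu(\cI)^3$ (the case $\mu(\cI) = 0$ is trivial). My strategy follows the hint preceding the lemma: inequality \eqref{eq:nuineq1} concentrates the measure $\nu$ near the ``diagonal'' when viewed through sets in $\cZ$, while non-atomicity of $\cI$ in $(\cS, \sigma(\cZ), \mu)$ makes this diagonal small.

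Fix $\varepsilon > 0$. Since $\mu$ restricted to $\cI$ is non-atomic, I would partition $\cI$ into $N$ pairwise disjoint $\sigma(\cZ)$-measurable pieces $B_1, \ldots, B_N$, each of measure at most $\varepsilon$. By Lemma \ref{lem:approx}, each $B_i$ admits an approximation $\tilde{A}_i = \bigsqcup_{a} A_{i,a}$ that is a finite disjoint union of elements of $\cZ$ with $\mu(B_i \triangle \tilde{A}_i) < \delta$ for a parameter $\delta > 0$ to be chosen at the end. Since $d\nu/d\mu^3 \le 1$, the symmetric-difference bound gives
\begin{align*}
|\nu(B_i \times B_j \times B_k) - \nu(\tilde{A}_i \times \tilde{A}_j \times \tilde{A}_k)| \le 3\delta,
\end{align*}
so expanding $\nu(\cI^3) = \sum_{i,j,k} \nu(B_i \times B_j \times B_k)$ introduces a total approximation error of at most $3 N^3 \delta$.

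Next I would extend \eqref{eq:nuineq1} to finite disjoint unions: expanding
\begin{align*}
\nu(\tilde{A}_i \times \tilde{A}_j \times \tilde{A}_k) = \sum_{a,b,c} \nu(A_{i,a} \times A_{j,b} \times A_{k,c}),
\end{align*}
applying \eqref{eq:nuineq1} termwise, and using the elementary fact $\sum x_n^3 \le (\sum x_n)^3$ for nonnegative reals together with the disjointness of the pieces within each $\tilde{A}_i$, yields
\begin{align*}
\nu(\tilde{A}_i \times \tilde{A}_j \times \tilde{A}_k) \le \mu(\tilde{A}_i \cap \tilde{A}_j \cap \tilde{A}_k)^3.
\end{align*}
When $i = j = k$ this is $\mu(\tilde{A}_i)^3 \le (\varepsilon + \delta)^2 \mu(\tilde{A}_i)$, which sums to at most $(\varepsilon + \delta)^2 (1 + N\delta)$. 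When $i, j, k$ are not all equal, the disjointness of the $B_i$ forces $\mu(\tilde{A}_i \cap \tilde{A}_j \cap \tilde{A}_k) \le 2\delta$, so each such term is at most $8\delta^3$ and these contribute at most $8 N^3 \delta^3$ in total.

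Combining all bounds,
\begin{align*}
\nu(\cI^3) \le (\varepsilon + \delta)^2 (1 + N\delta) + 8 N^3 \delta^3 + 3 N^3 \delta.
\end{align*}
Choosing, say, $\delta = \varepsilon/N^4$ and then letting $\varepsilon \to 0$ forces $\nu(\cI^3) = 0$, and hence $t(\sC_3, W|_{\cI}) = 0$. I expect the main obstacle to be careful bookkeeping of the three independent scales (piece size $\varepsilon$, approximation accuracy $\delta$, and number of pieces $N$), together with the disjoint-union extension of \eqref{eq:nuineq1}; once these are in place the diagonal-concentration argument closes cleanly.
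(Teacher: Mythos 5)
Your proposal is correct and follows essentially the same route as the paper's proof: partition $\cI$ into small pieces by non--atomicity, approximate each by a finite disjoint union of elements of $\cZ$ via Lemma \ref{lem:approx}, and use \eqref{eq:nuineq1} to confine the mass of $\nu$ to the diagonal, which is negligible. In fact your bookkeeping of the cross terms (the $3N^3\delta$ substitution error and the $8N^3\delta^3$ off--diagonal contribution) makes explicit the "various trivial bounds on lower order cross terms" that the paper deliberately leaves out.
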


\begin{proof}
For notational convenience we assume that $(\cS,\sigma(\cZ),\mu)$ is non--atomic, so that $\cS=\cI$, noting that the same proof carries through when restricting to the non--atomic subspace $\cI$, if necessary. Since $(\cS,\sigma(\cZ),\mu)$ is non--atomic, there exists pairwise disjoint measurable $B_1,\dots, B_n\in \sigma(\cZ)$ such  $\mu(B_i)=1/n$ for every $i=1,\dots, n$. 

By Lemma \ref{lem:approx}, for each $i=1,\dots, n$, there exists some $m_i\in \N$ and a set $A_i:=\bigcup_{\ell=1}^{m_i}A_{\ell}^{(i)}$, where $\left(A_{\ell}^{(i)}\right)_{i=1}^{m_i}\subseteq \cZ$ are pairwise disjoint,  satisfying $\mu(A_i \bigtriangleup B_i)<1/n^4$. 
Let $m=\max\{m_i \ : \ i=1,\dots n\}$, and for any $m_i< \ell \leq m$, define $A_{\ell}^{(i)}=\emptyset$, so that we may write $A_i=\bigcup_{\ell=1}^mA_{\ell}^{(i)}$ for each $i=1,\dots, n$.

Using the above, \eqref{eq:nuineq1}, the union bound and various trivial bounds on lower order cross terms (but still leaving out many details and not worrying about the tightness of the bound), we obtain
\begin{align}
 \nu(\cS^3) = \nu\left(\left(\bigcup_{i=1}^n B_i \right)^3\right) 
&\leq \nu\left(\left(\bigcup_{i=1}^n A_i \right)^3\right) + o_n(1) \\
&=\nu\left(\left(\bigcup_{i=1}^n\bigcup_{\ell=1}^m A_{\ell}^{(i)} \right)^3\right)  +o_n(1) \\
&\leq \sum_{i=1}^n \sum_{\ell_1,\ell_2,\ell_3=1}^m\nu(A_{\ell_1}^{(i)}\times A_{\ell_2}^{(i)}\times A_{\ell_3}^{(i)})+o_n(1) \\
&= \sum_{i=1}^n \sum_{\ell=1}^m \mu\left(A_{\ell}^{(i)}\right)^3+o_n(1) \\
&\leq \sum_{i=1}^n \left(\sum_{\ell=1}^m\mu\left(A_{\ell}^{(i)}\right)\right)^3+o_n(1) \\
&\leq \sum_{i=1}^n \left(\frac{1}{n}+\frac{1}{n^4}\right)^3 +o_n(1) \\
&=o_n(1),
\end{align}
where $f=o_n(1)$ means that $f(n)\to 0$ as $n\to \infty$. Since $n$ is arbitrary, it must be that $t(\sC_3,W)=\nu(\cS^3)=0$.
\end{proof}

Since transitivity is a hereditary property, it follows that $W$ restricted to any non--null subset of $\cI$ is also transitive. 

We now turn to presenting a canonical embedding of the the atoms $\cS_i$ and the non--atomic part $\cI$ inside $[0,1]$. Associate to every $x\in \cS$ a set 
\begin{align}
 R_x=\{j \in \cQ \ : \ W(x,y)=1 \text{ for } \mu\text{--a.e. } y \in \cS_j  \},
\end{align}
noting that, for almost every $x\in \cI$ and all $i\in \cQ$, either $W(x,y)=1$ for almost every $y\in \cS_i$ or $W(x,y)=0$ for almost every $y\in \cS_i$. Also note that for any $i\in \cQ$ and a.e. $x\in \cS_i$ we have $R_x=\{j\in \cQ \ : \ \cS_i \prec \cS_j\}$. 

Define $\Lambda : \cS\to [0,1]$ by
\begin{align}
 \Lambda(x) = \int_{\cI}W(x,y)d\mu(y) + \sum_{j\in R_x}\alpha_j.
\end{align}
The function $\Lambda$ is a.e. constant on each $\cS_i$, and for a.e. $(x,y)\in \cS_i\times \cS_j$, $i\neq j$, we have $\Lambda(x)>\Lambda(y)$ if and only if $\cS_i\prec \cS_j$. Define $\eta:\cQ\cup \cI \to [0,1]$ by
\begin{align}
 \eta(i) = 
\begin{cases}
1-\Lambda(i) & i\in \cI, \\
1-\frac{1}{\mu(\cS_{i})}{\int_{\cS_{i}}}\Lambda(x)d\mu(x)-\mu(\cS_i) & i\in \cQ.
\end{cases}
\end{align}
The map $\eta$ has the following properties.
\begin{itemize}
 \item $\eta|_{\cQ}$ is injective.
 \item The intervals $[\eta(i),\eta(i)+\mu(\cS_i))$, $i\in \cQ$ are disjoint.
 \item $\eta(i)<\eta(j)$ if and only if $\cS_i\prec \cS_j$.
 \item $\eta|_{\cI}$ is injective up to $\mu_{\cI}$--null sets.
 \item $\eta|_{\cI}$ is supported up to $\mu_{\cI}$--null sets on $[0,1]\setminus \bigcup_{i\in \cQ}[\eta(i),\eta(i)+\mu(\cS_i))$
 \item The sets $\{[\eta(i),\eta(i)+\mu(\cS_i))\}_{i\in \cQ} \cup \eta(\cI)$ form a partition (up to $\mu_{\cI}$--null sets of $[0,1]$). 
\end{itemize}
Note that the last property implies several of the other. The map $\eta$ provides a canonical embedding of $\cQ\cup \cI$ into $[0,1]$. Note that $\eta|_{\cI}:(\cI,\mu)\to (\eta(\cI),\Leb)$ is in fact a measure--preserving bijection, so taking any measure--preserving bijections $(\cS_i,\mu)\rightarrow ([\eta(i),\eta(i)+\mu(\cS_i),\Leb)$ for $i\in \cQ$ would give us a measure--preserving bijection (up to null sets) of $(\cS,\mu)$ into $([0,1],\Leb)$. 

The following theorem summarises the above results.

\begin{theorem}\label{thm:Wdecomp}
 Let $W:(\cS^2,\mu)\to [0,1]$ be a tournament kernel. Then $\cS$ has a partition $\cI \sqcup \bigsqcup_{i\in\cQ}\cS_i$ (where either $\cI$ or $\cQ$ may be empty), the measure $\mu$ can be written as 
\begin{align}
 \mu = \left(1-\sum_{i\in \cQ}\mu(\cT_i)\right)\mu_{\cI} + \sum_{i\in \cQ}\mu(\cT_i)\mu_i 
\end{align}
and $W$ as 
\begin{align}
 W=\left(\bdirsum{\mu(\cS_i)W_i}{i\in \cQ},\cI,\eta \right),
\end{align}
where $\eta: \cQ \cup \cI \to [0,1]$ is defined like above. Furthermore, the following statements hold (with the obvious modificiations if $\cQ$ or $\cI$ is empty).
\begin{enumerate}[(i)]
 \item $\mu(\cS_i)>0$ for each $i\in \cQ$. 
 \item The measures $\mu_{i}$ are the probability measures induced by $\mu$ on $\cS_i$, i.e. $\mu_i(A)=\frac{\mu(A \cap \cS_i)}{\mu(\cS_i)}$ for each measurable $A\subseteq \cS$. 
 \item The measure $\mu_{\cI}$ is the probability measure induced by $\mu$ on $\cI$ and it is non--atomic.
 \item The restriction $\eta|_{\cQ}:\cQ \to [0,1]$ is injective and the restriction $\eta|_{\cI}$ is injective (up to null sets) and supported on $[0,1]\setminus \bigcup_{i\in \cQ} [\eta(i),\eta(i)+\alpha_i)$.
 \item For $i\in \cQ$, the tournament kernel $W_i:(\cS_i,\mu_i)\to [0,1]$ is irreducible and equal to $W|_{\cS_i}:(\cS_i,\frac{1}{\mu(\cS_i)}\mu)\to [0,1]$. 
 \item For any measurable $A\subseteq \cI$, the restriction $W|_{A}:(A,\frac{1}{\mu_{\cI}(A)}\mu_{\cI})\to [0,1]$ is transitive.
\end{enumerate}
\end{theorem}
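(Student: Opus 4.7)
The plan is to assemble this theorem as a direct consequence of the structural results proved earlier in the section, so the proof is largely a matter of bookkeeping once the pieces are in place. First I would invoke the standard decomposition of the probability space $(\cS,\sigma(\cZ),\mu)$ into its atomic and non--atomic parts: let $(\cS_i)_{i \in \cQ}$ enumerate the atoms (countably many, since a probability space can have only countably many disjoint atoms of positive measure), and set $\cI := \cS \setminus \bigsqcup_{i \in \cQ} \cS_i$. By definition, atoms have positive measure, giving (i). The measures $\mu_i$ and $\mu_{\cI}$ in (ii) and (iii) are the natural conditional probability measures on the corresponding pieces, and the decomposition of $\mu$ stated in the theorem is merely their weighted recombination; the non--atomicity of $\mu_{\cI}$ is built into the definition of $\cI$.

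Next I would turn to the identification of $W$ with the claimed direct sum. By Lemma \ref{lem:atom}, each atom $\cS_i$ lies in $\cZ$, so for each $i \in \cQ$ there exist sets $B_i, C_i$ with $B_i \prec \cS_i \prec C_i$ and $B_i \prec C_i$. The lemma showing that $\preceq$ linearly orders the atoms then implies that for $i \ne j$ either $\cS_i \prec \cS_j$ or $\cS_j \prec \cS_i$, so $W$ takes the constant value $0$ or $1$ almost everywhere on $\cS_i \times \cS_j$. Moreover, for $i \in \cQ$ and a.e.\ $x \in \cI$, the point $x$ lies either in $B_i$ or in $C_i$ (up to null sets), so $W(x,y)$ is a.e.\ constant in $y \in \cS_i$; this is exactly what the definition of $R_x$ and $\Lambda$ was built to capture. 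The properties of $\eta$ listed in (iv) were verified in the paragraph preceding the theorem statement, and together they show that $\eta$ places the atoms and the non--atomic points in $[0,1]$ so that cross--component values of $W$ are determined by the order $\eta(x) \lessgtr \eta(y)$, matching the direct sum construction.

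For (v), the lemma that the restriction of $W$ to any atom is irreducible is exactly what is needed, applied to each $\cS_i$ with its conditional measure $\mu_i$. The equality $W_i = W|_{\cS_i}$ is simply the choice of $W_i$ in the direct sum decomposition. For (vi), Lemma \ref{lem:trans} gives $t(\sC_3, W|_{\cI}) = 0$, which by the equivalence $(3) \Leftrightarrow (1)$ of Theorem \ref{thm:acyclic} says that $W|_{\cI}$ is transitive; since transitivity, defined pointwise as $W(x,y), W(y,z) > 0 \Rightarrow W(x,z) = 1$ a.e., is inherited by restriction to any measurable $A \subseteq \cI$, statement (vi) follows.

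The main subtle point is checking that the integral formula defining $\Lambda$, and hence $\eta$, really does embed the atoms and the non--atomic part in $[0,1]$ in a consistent order--preserving way, since $\Lambda$ mixes contributions from $\cI$ (via the integral) with the discrete weights $\alpha_j$; this is where the careful definition of $R_x$ and the a.e.\ $\{0,1\}$--valuedness of $W$ on $\cI \times \cS_i$ are essential. Everything else is a direct citation of the earlier lemmas, so once the embedding is verified, the theorem assembles with no further work.
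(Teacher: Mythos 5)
Your proposal is correct and matches the paper's approach exactly: the paper presents Theorem \ref{thm:Wdecomp} as a summary of the preceding results (the atomic/non--atomic decomposition of $(\cS,\sigma(\cZ),\mu)$, Lemma \ref{lem:atom}, the irreducibility of $W$ restricted to atoms, Lemma \ref{lem:kernelorder}, Lemma \ref{lem:trans}, and the listed properties of $\eta$), with no further argument given. Your assembly of these pieces, including the observation that transitivity is hereditary under restriction for part (vi), is precisely the intended justification.
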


We should think of this theorem as a \emph{reordering} via the map $\eta$ of the original set $\cS$. In this sense Theorem \ref{thm:Wdecomp} gives the existence of a relabelling of the points of $\cS$, such that $W$ is equal to a direct sum in the ``natural way''. Theorem \ref{thm:tourdecomp1} says something similar. Given a \emph{labeled} graph, there is a relabeling of the vertices giving rise to the decomposition in Theorem \ref{thm:tourdecomp1} in a ``natural way''. Indeed, the proof technique used to prove Theorem \ref{thm:Wdecomp} could also have been used to prove Theorem \ref{thm:tourdecomp1}, with the slight difference that it suffices to consider just a $\sigma$--algebra and its atoms (no measures need to be involved). For $\sigma$--algebras an \emph{atom} would be defined as a set in the $\sigma$--algebra which does not contain any proper subset also in the $\sigma$--algebra. In that case the atoms would correspond to the irreducible subtournaments. 

In Theorem \ref{thm:tourdecomp1} and Corollary \ref{cor:tourdecomp} we saw that the irreducible components of size $1$ could sometimes be merged to form larger transitive components. One might ask to what extent this is possible here as well. Is it the case that the non--atomic part $\cI$ can be written as a countable union of non--null sets in $\cZ$, which can then be interlaced into the ordering $(\cQ,\eta)$ as transitive kernels? If this question is answered in the affirmative, then one could afford to forget about $\cI$ in Theorem \ref{thm:Wdecomp}. For instance, the following two direct sums are equivalent.
\begin{enumerate}
 \item Let $\cQ=\{0,3/4\}$, $\alpha_0=1/4$, $\alpha_{3/4}=1/4$, $\cI=(1/4,1/2)$, $\mu_{\cI}=4\Leb$ and let $W_0,W_{3/4}$ be arbitrary tournament kernels. Let $\eta$ be the identity map.
 \item Let $\cQ=\{0,1/4,3/4\}$, $\alpha_0=1/4$, $\alpha_{1/4}=1/2$, $\alpha_{3/4}=1/4$, and let $W_0,W_{3/4}$ be as above, with $W_{1/4}$ being the transitive kernel. Let $\eta$ be the identity map.
\end{enumerate}
In the second case $\alpha_0+\alpha_{1/4}+\alpha_{3/4}=1$, so we did not need to define $(\cI,\mu_{\cI})$. This example makes it clear that introducing $(\cI,\mu_{\cI})$ is sometimes ``unnecessary''. However, the following example of a Cantor--type tournament kernel shows that there is a direct sum for which $\sum_{i\in \cQ}\alpha_i<1$, but $\cI$ is not a countable union of elements from $\cZ$. In fact, the set $\cI$ contains no non--null element of $\cZ$ as a subset. This demonstrates that, in general, one cannot hope to forget about $(\cI,\mu_{\cI})$ by decomposing it into a countable number of parts which can be interlaced in the ordered set $\eta(\cQ)$. 

\begin{example}\label{ex:cantor}
Let $C_0=[0,1]$. Proceeding inductively, construct $C_{n+1}$ by removing from $C_n$ subintervals of width $1/2^{2(n+1)}$ from the middle of each of its $2^{n}$ intervals. Let $\cI=\bigcap_{n=0}^{\infty}C_n$. This is an uncountable and nowhere dense set with measure $1/2$ known as a Smith--Volterra--Cantor set. Its complement $[0,1]\setminus \cI$ is a countably infinite collection of disjoint intervals. Let $\cQ$ be the set of midpoints of these intervals, and for $i\in \cQ$, let $a_i<b_i$ be the endpoints of the interval containing $i$. Let $\alpha_i=b_i-a_i$ be the width of the interval. Then $\cQ$ is a countable subset of $[0,1]$ and $\cI\subseteq \overline{\cQ}\setminus \cQ$. 

Let $W':([0,1]^2,\Leb)\to [0,1]$ be some arbitrary irreducible tournament kernel. For each $i\in \cQ$, let $W_i:([a_i,b_i]^2,\mu_i)\to [0,1]$, where $\mu_i$ is the probability measure on $[a_i,b_i]$ induced by Lebesgue measure, be the kernel defined by $W_i(x,y)=W'\left(\frac{x-a_i}{b_i-a_i},\frac{y-a_i}{b_i-a_i} \right)$. Furthermore, let $\mu_{\cI}$ be the measure on $\cI$ defined by $\mu_{\cI}(\cdot)=\Leb(\cI\cap \cdot)/\Leb(\cI)$. Then let $\mu=\sum_{i\in \cQ}\alpha_i\mu_i + \frac{1}{2}\mu_{\cI}$ and define $W:([0,1]^2,\mu)\to [0,1]$ as the direct sum
\begin{align}
\left(\bdirsum{\alpha_iW_i}{i\in \cQ},\cI,\eta \right),
\end{align}
where $\eta:\cQ \cup \cI \to [0,1]$ is defined by 
\begin{align}
 \eta(i)=\begin{cases} a_i, & i\in \cQ \\ i, & i\in \cI.\end{cases}
\end{align}
This example can be generalized to construct direct sums of kernels for which $\sum_{i\in \cQ}\alpha_i$ takes any value in $(0,1)$. 
\end{example}
One could still refine Theorem \ref{thm:Wdecomp} by interlacing in the ordering $(\cQ,\eta)$ the maximal subsets $B\subseteq \cI$ such that $B\in \cZ$. The restriction of $W$ to any such $B$ is, as mentioned, a transitive tournament kernel. However, we refrain from doing this, since it does not bring any technical advantages. Instead we turn to providing formulae for induced densities of direct sums of tournament kernels, analogous to Theorem \ref{lem:tourdens}. We do this in two steps; first we prove a formula under the assumption that $\sum_{i\in \cQ}\alpha_i=1$ (so there is no non--atomic part); second, we extend this to the case $\sum_{i\in \cQ}\alpha_i<1$.

Given a finite tournament $F$ and a countable (ordered) set $\cQ$, denote by $\cP(F,Q)$ the set of all decompositions of $F$ into direct sums $\bdirsum{F_i}{i\in \cQ}$. Note that at most finitely many of the $F_i$ can be non--empty.

\begin{theorem}\label{thm:densvee1}
 Let $W=\left(\bdirsum{\alpha_iW_i}{i\in \cQ},\eta \right)$ be a direct sum of tournament kernels such that $\sum_{i\in \cQ}\alpha_i=1$. For any finite tournament $F$,
\begin{align}
 t_{\text{ind}}\left(F,W\right)=\sum_{\cP(F,\cQ)} \prod_{i\in\cQ} \alpha_{i}^{v(F_i)}t_{\text{ind}}(F_i,W_{i}).
\end{align}
\end{theorem}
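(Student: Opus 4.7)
The plan is to adapt the combinatorial argument behind Theorem \ref{lem:tourdens} to the measure-theoretic setting, exploiting the fact that sampling from $\mu$ is equivalent to first selecting a component $i\in \cQ$ with probability $\alpha_i$ and then sampling from $\mu_i$. The hypothesis $\sum_{i\in\cQ}\alpha_i=1$ is used precisely to guarantee that there is no non-atomic remainder to worry about, so that $\mu=\sum_{i\in\cQ}\alpha_i\mu_i$.

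First, I would apply Lemma \ref{lem:intform} to write
\begin{align}
t_{\text{ind}}(F,W)=\int_{\cS^{v(F)}}\prod_{(k,\ell)\in E(F)}W(x_k,x_\ell)\,d\mu(x_1)\cdots d\mu(x_{v(F)}).
\end{align}
Expanding $d\mu^{v(F)}$ into the convex combination $\sum_{i}\alpha_i d\mu_i$ in each coordinate and applying Fubini yields
\begin{align}
t_{\text{ind}}(F,W)=\sum_{\phi\colon V(F)\to\cQ}\Bigl(\prod_{k\in V(F)}\alpha_{\phi(k)}\Bigr)\,I_\phi,
\end{align}
where $I_\phi=\int_{\prod_k \cS_{\phi(k)}}\prod_{(k,\ell)\in E(F)}W(x_k,x_\ell)\prod_k d\mu_{\phi(k)}(x_k)$. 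The sum ranges over all colorings of $V(F)$ by elements of $\cQ$, and since $F$ is finite only finitely many colors appear in any given $\phi$.

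Next, I would identify which colorings contribute. If $\phi(k)\neq\phi(\ell)$, then on the product space $\cS_{\phi(k)}\times\cS_{\phi(\ell)}$ the kernel $W(x_k,x_\ell)$ is $\mu$-a.e.\ equal to $0$ or to $1$ depending on the $\eta$-ordering of $\phi(k)$ and $\phi(\ell)$. Consequently $I_\phi=0$ unless every inter-component edge of $F$ under $\phi$ is oriented consistently with this $\eta$-ordering; equivalently, setting $F_i:=F[\phi^{-1}(i)]$, the decomposition $F=\bdirsum{F_i}{i\in\cQ}$ holds in the sense of Definition \ref{def:tourdirsum}. These are precisely the decompositions enumerated by $\cP(F,\cQ)$, and each element of $\cP(F,\cQ)$ arises from exactly one such $\phi$.

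For a valid $\phi$ the integrand factorizes across the components: edges inside $\cS_i$ contribute $\prod_{(k,\ell)\in E(F_i)}W_i(x_k,x_\ell)$, and every inter-component factor is a.e.\ equal to $1$, so Fubini plus a second appeal to Lemma \ref{lem:intform} gives $I_\phi=\prod_{i\in\cQ}t_{\text{ind}}(F_i,W_i)$. Combined with $\prod_{k}\alpha_{\phi(k)}=\prod_{i\in\cQ}\alpha_i^{v(F_i)}$ and the bijection between valid $\phi$ and $\cP(F,\cQ)$, this produces the stated formula. The only mildly delicate point is the handling of the $\mu\times\mu$-null set on which $W$ between distinct components is left undefined; this is immediate from the direct-sum definition, after which the whole argument is routine bookkeeping parallel to the finite case of Theorem \ref{lem:tourdens}.
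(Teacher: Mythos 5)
Your proposal is correct and follows essentially the same route as the paper: the paper phrases the computation probabilistically (conditioning on which component $\cS_i$ each sample point $X_v$ lands in, via the law of total expectation), which is precisely your expansion of the product measure $\mu^{v(F)}=\bigl(\sum_i\alpha_i\mu_i\bigr)^{v(F)}$ into a sum over colorings $\phi\colon V(F)\to\cQ$. Your identification of the contributing colorings with elements of $\cP(F,\cQ)$ and the factorization of $I_\phi$ match the paper's argument step for step.
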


\begin{proof}
Recall that  
\begin{align}t_{\text{ind}}(F,W)
&= \E \left[\prod_{(v,w)\in E(F)} W(X_v,X_w) \right].
\end{align}
where the expectation is taken with respect to the measure $\mu=\sum_{i\in \cQ}\alpha_i\mu_i$. That is, with probability $\alpha_i$, the random variable $X_v$ is chosen from $\cT_i$ according to the measure $\mu_i$, independently for all $v\in V(F)$. Let $f(X_v)$ denote the random variable given by $f(X_v)=i$ if $X_v$ is chosen from $\cT_i$. 

There is a contribution to the expectation if and only if the sets $\{v\in V(F) \ : \ f(X_v)=i\}$ induce a decomposition $F=\bdirsum{F_i}{i\in \cQ}$ with $V(F_i)=\{v\in V(F) \ : \ f(X_v)=i\}$. To compute the expectation, we therefore first fix a decomposition $F=\bdirsum{F_i}{i\in \cQ}$ and count the contribution to the expectation with respect to this fixed decomposition. Summing over all such decompositions (and implicitly using the law of total expectation) then gives the desired result. 

So fix a decomposition $F=\bdirsum{F_i}{i\in \cQ}$. The probability that $f(X_v)=i$ for all $v\in V(F_i)$ is, by independence, equal to $\prod_{i\in \cQ}\alpha_i^{v(F_i)}$. Conditional on this event, again due to independence, the contribution to the expectation is 
\begin{align}
  \prod_{i\in \cQ}\E \left[\prod_{(v,w)\in E(F_i)} W(X_v,X_w) \right] = \prod_{i\in \cQ}t_{\text{ind}}(F_i,W_i).
\end{align}
Hence the contribution to the expectation with respect to the decomposition $F=\bdirsum{F_i}{i\in \cQ}$ is
\begin{align}
 \prod_{i\in \cQ} \alpha^{v(F_i)}_i t_{\text{ind}}(F_i,W_i).
\end{align}
Summing over all possible decompositions finishes the proof.
\end{proof}

In the next theorem we extend this result to direct sums with $\sum_{i\in \cQ}\alpha_i<1$. First we introduce some notation. For any $a,b\in \cQ \cup \cI$, define 

\begin{itemize}
 \item $\cQ_{(a,b)}=\{j\in\cQ  \ : \eta(a)<\eta(j)<\eta(b)\}$,
 \item $\alpha_{(a,b)}=\sum_{j\in \cQ_{(a,b)}}\alpha_j$, and
 \item $W_{(a,b)} = \left(\bdirsum{(\alpha_i/\alpha_{(a,b)}) W_i}{i\in \cQ_{(a,b)}}, \eta|_{Q_{(a,b)}}\right)$.
\end{itemize}
 Define finally $\Delta \cI^p = \{(r_1,r_2,\dots, r_p) \ : \ \eta(r_1)<\eta(r_2)<\dots < \eta(r_p) \}$.

For any finite tournament $F$ and any $p\geq 1$, let $\cP(F,p)$ denote the set of decompositions $F=F_{0}\dirsum{}{} K(v_1) \dirsum{}{} F_{1} \dirsum{}{} K(v_2) \dirsum{}{} \dots \dirsum{}{} K(v_p)\dirsum{}{} F_{p}$ where each $F_{i}$ is a tournament (possibly empty or possibly further decomposable)) and $K(v_i)$ denotes the singleton with vertex $v_i$ and empty edge set. This sort of decomposition can be obtained by choosing $p$ vertices among the singletons in the decomposition in Theorem \ref{thm:tourdecomp1}, followed by lumping together the remaining parts appropriately.

\begin{theorem}\label{thm:densvee2}
Let $W=\left(\bdirsum{\alpha_iW_i}{i\in \cQ},\cI,\eta \right)$ be a direct sum of tournament kernels such that $\sum_{i\in \cQ}\alpha_i\leq 1$. For any finite tournament $F$, 
\begin{align}\label{eq:vee1}
t_{\text{ind}}(F,W)&=\alpha^{v(F)}t_{ind}\left(F,\bdirsum{\frac{\alpha_i}{\alpha}W_i}{i\in \cQ} \right) \\
&\qquad + \sum_{p=1}^{v(F)}\sum_{F_p}\int_{\Delta\cI^p} \binom{v(F)}{p}(1-\alpha)^p\prod_{i=0}^{p} \alpha_{(r_i,r_{i+1})}^{v(F_{i})} t_{\text{ind}}\left(F_{i}, W_{(r_i,r_{i+1})}\right) \prod_{i=1}^p \mu_{\cI}(dr_i).
\end{align}
\end{theorem}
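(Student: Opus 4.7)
The plan is to extend Theorem~\ref{thm:densvee1} to the case $\alpha := \sum_{i\in\cQ}\alpha_i < 1$ by decomposing the ambient measure and conditioning on which sample points fall in the non-atomic part $\cI$. Starting from the integral representation
\[
t_{\text{ind}}(F, W) = \E\Big[\prod_{(u, v) \in E(F)} W(X_u, X_v)\Big],
\]
with $X_u$ i.i.d.\ from $\mu$, I would split $\mu = \alpha\mu_\cQ + (1-\alpha)\mu_\cI$, where $\mu_\cQ := \alpha^{-1}\sum_{i\in\cQ}\alpha_i\mu_i$ is the renormalized atomic measure, and expand $d\mu^{v(F)}$ as a sum over subsets $S \subseteq V(F)$ indexing which samples fall in $\cI$, with weight $\alpha^{v(F)-|S|}(1-\alpha)^{|S|}$. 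Grouping terms by $p = |S|$, the case $p=0$ reduces immediately via Theorem~\ref{thm:densvee1} to $\alpha^{v(F)} t_{\text{ind}}\bigl(F, \bdirsum{(\alpha_i/\alpha)W_i}{i\in\cQ}\bigr)$, producing the first term of the identity.

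For $p \geq 1$, the key structural observation is that $\mu_\cI$ is non-atomic and $\eta|_\cI$ is injective up to null sets, so the $p$ I-samples almost surely have distinct $\eta$-values and can be conditioned on an ordered tuple $(r_1, \dots, r_p) \in \Delta\cI^p$. Because $W(x, y)$ takes values in $\{0, 1\}$ whenever $x$ and $y$ lie in distinct blocks of the direct sum (with values forced by the comparison of $\eta(x)$ and $\eta(y)$), the integrand $\prod_{(u,v) \in E(F)} W(X_u, X_v)$ vanishes unless the I-samples (in $\eta$-order) together with the atomic labels of the non-I-samples jointly realize a decomposition
\[
F = F_0 \dirsum{}{} K(v_1) \dirsum{}{} F_1 \dirsum{}{} \cdots \dirsum{}{} K(v_p) \dirsum{}{} F_p \in \cP(F, p),
\]
in which each $K(v_j)$ is matched to an I-sample and each $F_j$ consists of precisely the non-I-samples whose atomic labels lie in the slot $\cQ_{(r_j, r_{j+1})}$.

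Conditional on a fixed decomposition and a fixed $(r_1, \dots, r_p)$, the integrand factorizes across the $p+1$ slots: within slot $(r_j, r_{j+1})$ each non-I-sample independently lands in the atoms of $\cQ_{(r_j, r_{j+1})}$ with probability $\alpha_{(r_j, r_{j+1})}$, and conditional on this event Theorem~\ref{thm:densvee1} applied to the renormalized direct sum $W_{(r_j, r_{j+1})}$ gives the density of $F_j$ as $t_{\text{ind}}(F_j, W_{(r_j, r_{j+1})})$. This produces the factor $\prod_{j=0}^p \alpha_{(r_j, r_{j+1})}^{v(F_j)} t_{\text{ind}}(F_j, W_{(r_j, r_{j+1})})$. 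After multiplying by the $(1-\alpha)^p$ weight for the I-samples, integrating over $\Delta\cI^p$ against $\prod_{j=1}^p \mu_\cI(dr_j)$, and summing over both the $\binom{v(F)}{p}$ ways to designate which $p$ sample indices in $V(F)$ are the I-vertices and the decomposition types in $\cP(F, p)$, one obtains the claimed formula.

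The main obstacle is the careful combinatorial bookkeeping needed to regroup the sum over index subsets $S \subseteq V(F)$ of size $p$ correctly into the stated expression $\binom{v(F)}{p}\sum_{F_p \in \cP(F, p)}$, ensuring neither over- nor under-counting. One also has to handle the boundary slots $(r_0, r_1)$ and $(r_p, r_{p+1})$ by interpreting $r_0$ and $r_{p+1}$ as implicit extremes of the $\eta$-range so that the slots partition $\cQ$, and to verify that a.s.\ injectivity of $\eta|_\cI$ genuinely renders conditioning on the ordered tuple $(r_1, \dots, r_p)$ well-defined. Once these are in place, the factorization across slots and the reduction to Theorem~\ref{thm:densvee1} within each slot are essentially routine.
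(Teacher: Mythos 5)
Your overall strategy is exactly the one the paper intends (the paper omits this proof and points to Theorems \ref{thm:densvee1} and \ref{thm:glimit2}): expand $\mu^{v(F)}$ over the subsets $S\subseteq V(F)$ of samples landing in $\cI$, observe that a nonzero contribution forces $S$ to be a set of singletons in a decomposition $F=F_0\dirsum{}{}K(v_1)\dirsum{}{}\cdots\dirsum{}{}K(v_p)\dirsum{}{}F_p$, condition on the ordered tuple $(r_1,\dots,r_p)\in\Delta\cI^p$ (legitimate since $\mu_{\cI}$ is non--atomic and $\eta|_{\cI}$ is a.e.\ injective), factorize across slots, and apply Theorem \ref{thm:densvee1} within each slot. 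All of that is sound, and the normalization $\alpha^{v(F)-p}\cdot\prod_j(\alpha_{(r_j,r_{j+1})}/\alpha)^{v(F_j)}=\prod_j\alpha_{(r_j,r_{j+1})}^{v(F_j)}$ works out as you indicate.

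The genuine problem is in the final bookkeeping step, which you flag as the main obstacle and then resolve incorrectly: you propose to sum \emph{both} over the $\binom{v(F)}{p}$ choices of which sample indices are the $\cI$--vertices \emph{and} over the decompositions in $\cP(F,p)$. This double counts. An element of $\cP(F,p)$ already records which $p$ vertices of $F$ are the marked singletons $v_1,\dots,v_p$; conversely, a subset $S\subseteq V(F)$ with $|S|=p$ contributes at all only if it is the singleton set of some decomposition, and in that case the decomposition is \emph{unique} (the order of the $v_i$ is forced by the transitive subtournament they induce, and the slot of each remaining vertex $w$ is forced by the directions of the edges between $w$ and $S$). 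So the sum over subsets $S$ \emph{is} the sum over $\cP(F,p)$, and no additional factor $\binom{v(F)}{p}$ may be introduced. Carried out correctly, your argument yields the identity \emph{without} the binomial coefficient. Indeed, the stated formula fails a sanity check: for $F=\sT_2$ one computes $t_{\text{ind}}(\sT_2,W)=\tfrac12$ for every tournament kernel, whereas the right--hand side of \eqref{eq:vee1} evaluates to $\tfrac{\alpha^2}{2}+2\alpha(1-\alpha)+\tfrac{(1-\alpha)^2}{2}=\tfrac12+\alpha(1-\alpha)$; dropping the $\binom{v(F)}{p}$ restores equality. (The same spurious factor appears in the paper's statement and in the sketch of Theorem \ref{thm:glimit2}, where the unconditional probability $\binom{v(F)}{p}(1-\alpha)^p\alpha^{v(F)-p}$ of the event $|S|=p$ is multiplied by a conditional probability that is computed for a \emph{specific} $S$ without the compensating factor $1/\binom{v(F)}{p}$.) So: keep your argument, but replace the last step by a sum over subsets $S$ of size $p$, identify the nonvanishing terms with $\cP(F,p)$, and conclude with weight $(1-\alpha)^p$ only.
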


For notational convenience, we have implicitly defined $\cQ_{(r_0,r_1)}=\{j\in \cQ \ : \ \eta(j)<\eta(r_1)\}$ and $\cQ_{(r_p,r_{p+1})}=\{j\in \cQ \ : \ \eta(r_{p})<\eta(j)\}$ to deal with the boundary cases in the product.

\begin{proof}
 We omit this proof. The idea is very similar to the proofs of Theorem \ref{thm:densvee1} and Theorem \ref{thm:glimit2}.
\end{proof}
 
\section{Direct sums of tournament limits}
Throughout this section, suppose we have the following objects:
\begin{itemize}
 \item a countable set $\cQ$,
 \item a non--atomic probability space $(\cI,\mu_{\cI})$,
 \item non--negative constants $(\alpha_i)_{i\in \cQ}$ such that $\sum_{i\in \cQ}\alpha_i\leq 1$, 
 \item tournament limits $(\Gamma_i)_{i\in \cQ}$, and 
 \item an map $\eta:\cQ \cup \cI \to [0,1]$ such that $\eta|{\cQ}$ is injective and $\eta|_{\cI}$ is injective up to $\mu_{\cI}$--null sets.
\end{itemize}
The reader will note that this list of objects is identical to the list of objects required to define the direct sum of tournament kernels in Section \ref{sec:kerneldecomp}, with the exception that the tournament kernels $(W_i)_{i\in \cQ}$ have been replaced by tournament limits $(\Gamma_i)_{i\in \cQ}$.

The aim of this section is two--fold. First, we wish to define the direct sum $\left(\bdirsum{\alpha_i\Gamma_i}{i\in \cQ},\cI,\eta \right)$ directly, without direct reference to any particular choice of tournament kernels representing $(\Gamma_i)_{i\in \cQ}$.  Second, we would like that the notions of direct sums for kernels and limits agree, i.e. to show that the direct sum $\left(\bdirsum{\alpha_iW_i}{i\in \cQ},\cI,\eta \right)$ represents $\left(\bdirsum{\alpha_i\Gamma_i}{i\in \cQ},\cI,\eta \right)$, whenever $W_i$ represents $\Gamma_i$ for each $i\in \cQ$. Naturally, our definition of the direct sum of tournament limits is made so that we can achieve the second aim, so in a sense we are merely choosing the ``correct'' definition in order to make the theory sensible.

We first define the direct sum of tournament limits in the case that $\sum_{i\in \cQ}\alpha_i=1$. The main idea of the following proof is to use the set $((\alpha_i,\Gamma_i))_{i\in \cQ}$ and the injective map $\eta:\cQ\to [0,1]$ to construct a suitably chosen sequence $(G_m)_{m\in \N}$ of direct sums of tournaments, such that the numbers $t(F,G_m)$ converge for any finite $F$. We then define the direct sum to be the unique limit of $G_m$.

\begin{theorem}\label{thm:glimit1}
 Let $(\Gamma_i)_{i\in \cQ}\subseteq \widehat{\cT}$ be a set of tournament limits and let $(\alpha_i)_{i\in \cQ}$ be a set of positive numbers such that $\sum_{i\in \cQ}\alpha_i=1$. Then there is a unique tournament limit $\Gamma$ such that, for any finite tournament $F$,
 \begin{align}
 t_{\text{ind}}(F,\Gamma)=\sum_{\cP(F,\cQ)}\prod_{i\in \cQ}\alpha_i ^{v(F_i)}t_{\text{ind}}(F_i,\Gamma_{i}).
\end{align} 
Define 
\begin{align}
 \left(\bdirsum{\alpha_i\Gamma_i}{i\in \cQ},\eta \right):= \Gamma.
\end{align}
\end{theorem}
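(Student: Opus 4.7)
Uniqueness is immediate: a tournament limit is determined by its induced homomorphism densities, so at most one $\Gamma\in\widehat{\cT}$ can satisfy the required formula. It remains to show existence.

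The plan is to construct an explicit sequence of finite tournaments $(G_m)_{m\geq 1}$ whose induced densities converge to the prescribed right-hand side, and to take $\Gamma$ to be its limit in $\overline{\cT}$. For each $i\in\cQ$ pick a sequence of finite tournaments $(H_i^{(n)})_{n\geq 1}$ with $v(H_i^{(n)})\to\infty$ and $H_i^{(n)}\to\Gamma_i$. Enumerate $\cQ=\{q_1,q_2,\dots\}$ (the case $|\cQ|<\infty$ is a specialisation), set $\cQ_m=\{q_1,\dots,q_m\}$, and choose integers $N_m\to\infty$ growing so rapidly that $\alpha_{q_k}N_m\to\infty$ for each fixed $k$. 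Take $n_{k,m}:=\lfloor\alpha_{q_k}N_m\rfloor$, absorb any leftover vertices needed to reach $v(G_m):=\sum_{k\leq m}n_{k,m}$ into a single block whose relative size tends to $0$, and set
\begin{align}
 G_m:=\bdirsum{H_{q_k}^{(n_{k,m})}}{k\leq m}
\end{align}
ordered by the restriction of $\eta$ to $\cQ_m$. By construction, $v(G_m)\to\infty$ and $v(H_{q_k}^{(n_{k,m})})/v(G_m)\to\alpha_{q_k}$ for every fixed $k$.

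For any fixed finite tournament $F$, Theorem~\ref{lem:tourdens} gives
\begin{align}
 t_{\text{ind}}(F,G_m)=\frac{1}{(v(G_m))_{v(F)}}\sum_{\cP(F,\cQ_m)}\prod_{k\leq m}(v(H_{q_k}^{(n_{k,m})}))_{v(F_{q_k})}\,t_{\text{ind}}(F_{q_k},H_{q_k}^{(n_{k,m})}).
\end{align}
Since $v(F)$ is fixed while $v(G_m)\to\infty$, each falling-factorial ratio $(v(H_{q_k}^{(n_{k,m})}))_{v(F_{q_k})}/(v(G_m))_{v(F_{q_k})}$ tends to $\alpha_{q_k}^{v(F_{q_k})}$, and by choice of $(H_i^{(n)})$ each factor $t_{\text{ind}}(F_{q_k},H_{q_k}^{(n_{k,m})})$ tends to $t_{\text{ind}}(F_{q_k},\Gamma_{q_k})$. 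The target series
\begin{align}
 \sum_{\cP(F,\cQ)}\prod_{i\in\cQ}\alpha_i^{v(F_i)}\,t_{\text{ind}}(F_i,\Gamma_i)
\end{align}
is dominated by $\sum_{\cP(F,\cQ)}\prod_i\alpha_i^{v(F_i)}\leq\bigl(\sum_i\alpha_i\bigr)^{v(F)}=1$, hence absolutely convergent, so a dominated-convergence-style term-by-term passage to the limit is justified. This shows $t_{\text{ind}}(F,G_m)$ converges to the prescribed right-hand side, and since this holds for every finite tournament $F$ together with $v(G_m)\to\infty$, $(G_m)$ converges in $\overline{\cT}$ to a tournament limit $\Gamma\in\widehat{\cT}$ with exactly the required induced densities.

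The main technical obstacle is the simultaneous bookkeeping in the choice of $(N_m)$, $(n_{k,m})$ and the placement of rounding residuals: a diagonal construction must ensure (a) each constituent $H_{q_k}^{(n_{k,m})}$ is large enough that its densities already approximate those of $\Gamma_{q_k}$ to within $o(1)$, (b) the total vertex count $v(G_m)$ diverges, and (c) the relative proportions converge to $\alpha_{q_k}$ despite $\cQ$ being potentially infinite and the truncated sum $\sum_{k\leq m}\alpha_{q_k}$ only tending to $1$. Once this is in place, the dominating bound above converts termwise limit-passing into convergence of the full sum.
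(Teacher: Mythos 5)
Your proof is correct and follows essentially the same route as the paper's: approximate by finite direct sums of finite tournaments, apply Theorem~\ref{lem:tourdens}, and pass to the limit, the only organizational difference being that the paper first settles finite $\cQ$ and then handles countable $\cQ$ by taking the limit of the truncated tournament limits $\Gamma_{(m)}$ in the closed space $\overline{\cT}$ (via monotone convergence of $\alpha_{(m)}^{-v(F)}\sum_{\cP(F,\cQ_m)}$, where $\alpha_{(m)}=\sum_{k\leq m}\alpha_{q_k}$), whereas you diagonalize both truncations into a single sequence of finite tournaments. One small imprecision: to interchange the limit in $m$ with the sum over decompositions you must dominate the \emph{approximating} terms uniformly in $m$ by a summable sequence (e.g.\ by $C\prod_i\alpha_i^{v(F_i)}$ with $C=\bigl(\alpha_{(1)}\bigr)^{-v(F)}$, using $n_{k,m}/v(G_m)\leq\alpha_{q_k}/\alpha_{(m)}+o(1)$ and $(n)_k/(N)_k\leq (n/N)^k$), not merely observe that the target series is dominated by $1$ --- but this bound is immediate from your setup, so the gap is cosmetic.
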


\begin{proof}
The proof follows Theorem 4.2 of Janson \cite{Janson2008}, which gives the corresponding result for graph limits. Below we implicitly give $\cQ$ the ordering induced by $\eta$.

\textbf{$\cQ$ finite. }First let $\cQ$ be finite. For each $i\in \cQ$, let $(G_{i,m})_{m\in \N}$ be a sequence of tournaments such that
\begin{align}
 G_{i,m}&\to \Gamma_i, & i\in \cQ \\
 \frac{v(G_{i,m})}{\sum_{j\in \cQ}v(G_{j,m})}&\to \alpha_i, & i\in \cQ
\end{align}
as $m\to \infty$. The graph sequence $G_{i,m}$ may be chosen deterministically or by taking initial segments of the random graph $G(\infty,\Gamma)$, which will give us almost sure convergence to $\Gamma$. We define $G_m=\bdirsum{G_{i,m}}{i\in \cQ}$.

Recall that $(n)_k\sim n^k$ if $k$ is fixed and $n\to \infty$ and that $\sum_{i\in \cQ} v(F_i)=v(F)$. By Lemma \ref{lem:tourdens} we have
\begin{align}
 t_{\text{ind}}\left(F,G_m\right)
&=\frac{1}{(v(G_m))_{v(F)}}\sum_{\cP(F,\cQ)}\prod_{i\in \cQ}v((G_{i,m}))_{v(F_i)}t_{\text{ind}}(F_i,G_{i,m}) \\
&\sim \frac{1}{v(G_m)^{v(F)}}\sum_{\cP(F,\cQ)}\prod_{i\in \cQ}v(G_{i,m})^{v(F_i)}t_{\text{ind}}(F_i,G_{i,m}) \\
&=\sum_{\cP(F,\cQ)}\prod_{i\in \cQ}\left(\frac{v(G_{i,m})}{v(G_m)}\right)^{v(F_i)}t_{\text{ind}}(F_i,G_{i,m}) \\
&\to \sum_{\cP(F,\cQ)}\prod_{i\in \cQ}\alpha_{i} ^{v(F_i)}t_{\text{ind}}(F_i,\Gamma_{i})
\end{align}
as $m\to \infty$. Note that the sum is finite (since $\cQ$ is finite), so we may interchange limit and summation as in the last step. Therefore the numbers $ t_{\text{ind}}\left(F,G_m\right)$ converge for any finite tournament $F$, so $(G_m)_{m\in \N}$ converges to some tournament limit $\Gamma$. We denote this tournament limit by $\left(\bdirsum{\alpha_i\Gamma_i}{i\in \cQ},\eta \right)$. This proves the statement for $\cQ$ finite.

\textbf{$\cQ$ countably infinite. }Now suppose that $\cQ$ be countably infinite. Fix a bijection $\Psi: \N \to \cQ$, and for $m\in \N$, define $\cQ_m:=\Psi\left(\{1,2,\dots, m\} \right)\subseteq \cQ$ and $\Gamma_{(m)} := \left(\dirsum{(\alpha_i/\alpha_{(m)})\Gamma_i}{i\in \cQ_m},\eta|_{\cQ_m} \right)$, where $\alpha_{(m)}=\sum_{i\in \cQ_m}\alpha_i$. The tournament limits $\Gamma_{(m)}$ are well--defined by the first part of the proof. We claim that the sequence $(\Gamma_{(m)})_{m=1}^{\infty}$ is convergent in the closed space $\widehat{\cT}$. In fact, by monotone convergence and the fact that $\alpha_{(m)}\to 1$ as $m\to \infty$, it holds that, for any finite tournament $F$,
\begin{align}
 t_{\text{ind}}(F,\Gamma_{(m)})
&=\sum_{\cP(F,\cQ_m)}\prod_{i\in \cQ_m}\left(\frac{\alpha_i}{\alpha_{(m)}}\right) ^{v(F_i)}t_{\text{ind}}(F_i,\Gamma_{i}) \\
&\to \sum_{\cP(F,\cQ)}\prod_{i\in \cQ}\alpha_{i} ^{v(F_i)}t_{\text{ind}}(F_i,\Gamma_{i})
\end{align}
as $m\to \infty$. This proves that the sequence $(\Gamma_{(m)})_{m\geq 1}\subseteq \widehat{\cT}$ is convergent in the closed space $\overline{\cT}$, so it converges to some limit object $\Gamma \in \overline{\cT}$. It follows from the description in Section 2 that the space $\widehat{\cT}$ is closed in $\overline{\cT}$, so this implies that $\Gamma \in \widehat{\cT}$, i.e. $\Gamma$ is a tournament limit. Moreover, it is unique since tournament limits are uniquely defined by their homomorphism densities. Define
\begin{align}
 \left(\bdirsum{\alpha_i\Gamma_i}{i\in \cQ},\eta \right) := \lim_{m\to \infty}\Gamma_{(m)}=\Gamma
\end{align}
\end{proof}

We now extend this definition to the case when $\sum_{i\in \cQ}\alpha_i<1$. We recycle the notation used in Theorem \ref{thm:densvee1} and do not restate it here. However, we should mention that the tournament limit $\Gamma_{(a,b)} := \left(\bdirsum{(\alpha_i/\alpha_{(a,b)}) \Gamma_i}{i\in \cQ_{(a,b)}}, \eta|_{Q_{(a,b)}}\right)$ is well--defined by Theorem \ref{thm:glimit1}.


\begin{theorem}\label{thm:glimit2}
Let $(\Gamma_i)_{i\in \cQ}\subseteq \widehat{\cT}$ be a set of tournament limits and let $(\alpha_i)_{i\in \cQ}$ be a set of positive numbers such that $0<\sum_{i\in \cQ}\alpha_i\leq 1$. Then there is a unique tournament limit $\Gamma$ such that, for any finite tournament $F$,
\begin{align}
t_{\text{ind}}(F,\Gamma)=&\alpha^{v(F)}t_{\text{ind}}\left(F,\bdirsum{\frac{\alpha_i}{\alpha} \Gamma_i}{i\in \cQ} \right) \\
&\quad +\sum_{p=1}^{v(F)}\sum_{\cP(F,p)}\int_{\Delta \cI^p}\binom{v(F)}{p}(1-\alpha)^p \prod_{i=0}^{p} \alpha_{(r_i,r_{i+1})}^{v(F_{i})} t_{\text{ind}}\left(F_{i}, \Gamma_{(r_i,r_{i+1})}\right) \prod_{i=1}^p \mu_{\cI}(dr_i). \label{eq:glimit2}
\end{align}
Define  
\begin{align}
 \left(\bdirsum{\alpha_i\Gamma_i}{i\in \cQ},\cI,\eta \right):= \Gamma.
\end{align}
\end{theorem}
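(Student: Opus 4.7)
The plan is to construct $\Gamma$ via the kernel-level decomposition from Section \ref{sec:kerneldecomp} and then translate Theorem \ref{thm:densvee2} into the required limit-level identity; uniqueness will follow from the fact that a tournament limit is determined by its vector of induced homomorphism densities.

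First, for each $i\in\cQ$, I would choose any tournament kernel $W_i:(\cS_i,\mu_i)\to[0,1]$ that represents $\Gamma_i$; such a $W_i$ exists by Theorem \ref{thm:equiv}. The direct-sum kernel
\begin{align}
W := \left(\bdirsum{\alpha_iW_i}{i\in\cQ},\cI,\eta\right)
\end{align}
is then a well-defined tournament kernel on $(\cS,\mu)$ by the construction in Section \ref{sec:kerneldecomp}, and by Lemma \ref{lem:intform} it represents a unique tournament limit $\Gamma:=\Gamma_W\in\widehat{\cT}$. This $\Gamma$ is my candidate for $\left(\bdirsum{\alpha_i\Gamma_i}{i\in\cQ},\cI,\eta\right)$.

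Next, I would invoke Theorem \ref{thm:densvee2} to expand $t_{\text{ind}}(F,W)$ in terms of densities against the sub-kernels $\bdirsum{(\alpha_i/\alpha)W_i}{i\in\cQ}$ and $W_{(r_i,r_{i+1})}$. By Theorem \ref{thm:glimit1}, these sub-kernels are representatives of the limits $\bdirsum{(\alpha_i/\alpha)\Gamma_i}{i\in\cQ}$ and $\Gamma_{(r_i,r_{i+1})}$ respectively (both well-defined since the renormalised weights sum to $1$ on each index set). Consequently each kernel-density factor appearing in Theorem \ref{thm:densvee2} coincides with the corresponding limit-density factor in \eqref{eq:glimit2}, and $t_{\text{ind}}(F,\Gamma)=t_{\text{ind}}(F,W)$ equals the right-hand side of \eqref{eq:glimit2}. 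It is then automatic that this value depends only on the data $(\alpha_i,\Gamma_i,\cI,\eta)$ and not on the particular choice of representatives $W_i$, since every factor on the right-hand side has this property.

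For uniqueness, I recall from Section \ref{sec:prelim} that the map $\tau$ sending a digraph (or digraph limit) to its vector of homomorphism densities is injective on $\overline{\cD}$, and that the quantities $t_{\text{ind}}(F,\cdot)$ and $t(F,\cdot)$ determine each other on tournaments. Hence any tournament limit is uniquely determined by the collection $\bigl(t_{\text{ind}}(F,\cdot)\bigr)_{F\in\cT}$, so any $\Gamma'\in\widehat{\cT}$ satisfying \eqref{eq:glimit2} must coincide with $\Gamma$. The main conceptual obstacle in this argument lies inside Theorem \ref{thm:densvee2}, whose proof is omitted in the paper but mirrors the random-sampling combinatorics of Theorem \ref{thm:densvee1}, now keeping track of which vertices of $F$ land in the non-atomic part $\cI$ and inserting the integration over $\Delta\cI^p$ accordingly; once that kernel-level formula is accepted, the present theorem reduces to a direct transfer of identities combined with the standard uniqueness principle for dense graph limits.
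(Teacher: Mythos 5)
Your proposal is correct, but it is organised differently from the paper's proof, and the comparison is worth spelling out. The paper proves Theorem \ref{thm:glimit2} by constructing an explicit \emph{random} approximating sequence $(G_m)$ of finite tournaments (deterministic blocks $G_{i,m}\to\Gamma_i$ together with extra vertices labelled by $\mu_{\cI}$--uniform points of $\cI$), and then carries out the conditioning computation showing $\E[t_{\text{ind}}(F,G_m)]$ converges to the right--hand side of \eqref{eq:glimit2}; only afterwards does it compare with Theorem \ref{thm:densvee2} to identify the limit as the deterministic $\Gamma_W$ and to upgrade convergence of expectations to convergence in probability via Diaconis--Janson. You bypass the random sequence entirely: you set $\Gamma:=\Gamma_W$ from the start, expand $t_{\text{ind}}(F,W)$ by Theorem \ref{thm:densvee2}, and convert each kernel factor $t_{\text{ind}}(F_i,W_{(r_i,r_{i+1})})$ into the corresponding limit factor using the already--established $\sum_i\alpha_i=1$ correspondence between Theorems \ref{thm:densvee1} and \ref{thm:glimit1}; uniqueness and independence of the choice of representatives then follow from injectivity of the density vector. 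This is logically sound and shorter. What it buys is economy; what it gives up is that essentially all of the combinatorial work is deferred to Theorem \ref{thm:densvee2}, whose proof the paper omits with the remark that it resembles the proof of Theorem \ref{thm:glimit2} itself --- so the paper's probabilistic computation is, in effect, the one place where that calculation actually gets done, and it additionally yields the approximation statement $G_m\xrightarrow{p}\Gamma$, which your argument does not produce. If you intend your version to stand alone, you should supply the proof of Theorem \ref{thm:densvee2} (a direct integral/conditioning computation over which blocks of $\cS$ the sampled points land in, parallel to Theorem \ref{thm:densvee1} but tracking the $p$ points falling in $\cI$ and integrating over $\Delta\cI^p$); otherwise the dependency structure becomes circular.
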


Essentially the idea of the proof is similar to that of Theorem \ref{thm:glimit1}, with the crucial difference that we now construct a \emph{random} sequence $(G_m)_{m=1}^{\infty}$ of tournaments. We first show that the numbers $\E[t_{ind}(F,G_m)]$ converge as $m\to \infty$ to the right hand side of \eqref{eq:glimit2}, for each tournament $F$. According to Theorem 3.1 of \cite{DiaconisJanson} (extended to digraph and tournament limits), this is equivalent to the convergence of $G_m$ in distribution to a \emph{random} tournament limit. However, by Theorem \ref{thm:densvee2}, there exists a tournament kernel $W$ (which is a direct sum) such that $\E[t_{ind}(F,G_m)]\to t_{ind}(F,W)$. Now let $\Gamma$ be the unique tournament limit representing $W$, so that $t_{ind}(F,\Gamma)=t_{ind}(F,W)$ and $\E[t_{ind}(F,G_m)]\to t(F,\Gamma)$ for any tournament $F$. This implies, by Theorem 3.1 or Corollary 3.2 of \cite{DiaconisJanson}, that $G_m$ converges to the \emph{non--random} tourmanent limit $\Gamma$ in distribution (and hence in probability). By uniqueness of limits, we may define the direct sum $\left(\bdirsum{\alpha_i\Gamma_i}{i\in \cQ},\cI,\eta \right)$ to be this $\Gamma$.

\begin{proof}
Give $\cQ$ and $\cI$ the ordering induced by $\eta$.

For each $i\in \cQ$, let $(G_{i,m})_{m\in \N}$ be a sequence of tournaments such that $G_{i,m}\to \Gamma_i$ as $m\to \infty$. Let $(G_{\ast,m})_{m\in \N}$ be a sequence of sets. These sequences can be constructed so as to satisfy
\begin{align}
 \frac{v(G_{i,m})}{\sum_{j\in \cQ}v(G_{j,m})+|G_{\ast,m}|}&\to \alpha_i, & i\in \cQ \\
\frac{|G_{\ast,m}|}{\sum_{j\in \cQ}v(G_{j,m})+|G_{\ast,m}|}&\to 1-\sum_{i\in \cQ}\alpha_i=1-\alpha
\end{align}
as $m\to \infty$. We may take deterministic tournament sequences, or sequences which converge almost surely to the correct tournament limits. Also we may take $v(G_{i,0})=0$ for all $i\in \cQ$, so that the number $\sum_{i\in \cQ}v(G_{i,m})+|G_{\ast,m}|$ is finite for any $m\in \N$.

Define $f:G_{\ast,m}\sqcup \bigsqcup_{i\in \cQ} V(G_{i,m}) \to \cQ \cup \cI$ as follows. For each $i\in \cQ$ and $v\in V(G_{i,m})$, let $f(v)=i$, while for each $v\in V(G_{\ast,m})$, let $f(v)$ be a $\mu_{\cI}$--uniform random point in $\cI$. (It is because we choose the labels $\mu_{\cI}$--uniformly that we only obtain convergence of the expected value.) Let $G_m$ be the random tournament with vertex set 
\begin{align}
 V(G_m)=G_{\ast,m}\sqcup \bigsqcup_{i\in \cQ} V(G_{i,m})
\end{align}
and edge set
\begin{align}
 E(G_m)&=\{(v,w)\in V(G_m)^2\ : \ f(v)<f(w), \text{ or } f(v)=f(w) \text{ and } (v,w)\in E(G_{f(v)})\}.
\end{align}
Note that $f(v)=f(w)$ occurs with probability $0$ if $f(v),f(w)\in \cI$ (since the probability space $(\cI,\mu_{\cI})$ is non--atomic). Also note that the sets $V(G_m)$ may be taken to be non--random, so the randomness lies in the edge sets $E(G_m)$.

As mentioned, we are interested in determining the limit of $\E[t_{ind}(F,G_m)]$ as $m\to \infty$ for any finite tournament $F$. By the discussion in Section \ref{sec:prelim}, it holds that
\begin{align}
 \E[t_{ind}(F,G_m)]&=\E[\pP[\Phi:V(F)\to V(G_m) \text{ is a homomorphism that preserves non--adjacency}|G_m]] \\
& = \pP[\Phi:V(F)\to V(G_m) \text{ is a homomorphism that preserves non--adjacency}]
\end{align}
where $\Phi:V(F)\to V(G_m)$ is chosen at random among all injective maps $V(F)\to V(G_m)$. It should be mentioned that there are two layers of randomness here -- firstly $G_m$ is random, and secondly we choose, conditional on the $G_m$, a random injective map $V(F)\to V(G_m)$. We shall compute this expectation by repeated use of conditioning and the law of total expectation.

For each $v\in V(F)$, either $\Phi(v)\in \sqcup_{i\in \cQ}V(G_{i,m})$ (equivalently $(f\circ \Phi)(v)\in \cQ$) or $\Phi(v)\in G_{\ast,m}$ (equivalently $(f\circ \Phi)(v)\in \cI$). Fix some $p\in \{0,1,\dots, v(F)\}$ and condition on the event that
\begin{align}
 |\Phi^{-1}(f^{-1}(\cI))|=p \label{eq:Ap}
\end{align}
The probability of this event can be shown to tend to
\begin{align}
 \binom{v(F)}{p}(1-\alpha)^p \alpha^{v(F)-p}
\end{align}
as $m\to \infty$. The subgraph of $G_m$ induced by the vertices $\Phi(\Phi^{-1}(f^{-1}(\cI)))$ is transitive, so $\Phi$ preserves adjacency and non--adjacency only if the vertices $\Phi^{-1}(f^{-1}(\cI))$ induce a transitive subgraph of $F$. That is, if $\Phi^{-1}(f^{-1}(\cI))=\{v_1,v_2,\dots, v_p\}$, where the indices are chosen so that $(v_i,v_j)\in E(F)$ for any $1\leq i < j \leq p$, then $\Phi$ can only preserves adjacency and non--adjacency if also
\begin{align}
 (f\circ \Phi)(v_1) < (f\circ \Phi)(v_2) < \cdots < (f\circ \Phi)(v_p),  \label{eq:Iverts}
\end{align}

Since the set $\Phi^{-1}(f^{-1}(\cI))$ must induce a transitive subgraph of $F$, the vertices $v_1,\dots, v_p\in V(F)$ must appear as singletons in the decomposition given in Theorem \ref{thm:tourdecomp1}. That is, the vertices $v_1,\dots v_p$ must induce a decomposition of the form $F=F_0\dirsum{}{} K(v_1) \dirsum{}{} F_1 \dirsum{}{} \cdots \dirsum{}{} K(v_p) \dirsum{}{} F_{p}$, where each $F_i$ is an induced subtournament of $F$ (possibly empty) and each $K(v_i)$ is the singleton graph with vertex $v_i$. Given this, for $\Phi$ to preserve adjacency and non--adjacency, it is also necessary that 
\begin{align}
 (f\circ \Phi )(V(F_0))<(f\circ \Phi )(v_1)<\dots < (f\circ \Phi )(v_p) < (f\circ \Phi )(V(F_p)), \label{eq:Qverts}
\end{align}
where if $X,Y$ are sets, we by $X<Y$ mean that $x<y$ for all $x\in X, y\in Y$. The probability of this event, conditional on the events in \eqref{eq:Ap} and \eqref{eq:Iverts}, and on the values $r_i:=(f\circ \Phi)(v_i)$, $i=1,\dots, p$, is
\begin{align}
& \frac{1}{\left(\sum_{i\in \cQ} v(G_{i,m}) \right)_{v(F)-p}} \prod_{i=0}^{p}\left(\sum_{j\in (r_i,r_{i+1})\cap \cQ}v(G_{j,m}) \right)_{v(F_i)} \\
&\qquad \sim \frac{\left(v(G_m) \right)^{v(F)-p}}{\left(\sum_{i\in \cQ} v(G_{i,m}) \right)^{v(F)-p}}\prod_{i=0}^{p}\left(\sum_{j\in (r_i,r_{i+1})\cap \cQ}\frac{v(G_{j,m})}{v(G_m)} \right)^{v(F_i)} \\
& \qquad \qquad \to \frac{1}{\alpha ^{v(F)-p}}\prod_{i=0}^p \alpha_{(r_i,r_{i+1})}^{v(F_i)}, \label{eq:glimit4}
\end{align}
as $m\to \infty$. 

Finally, conditional on the events in \eqref{eq:Ap}, \eqref{eq:Iverts} and \eqref{eq:Qverts}, for $\Phi$ to preserve adjacency and non--adjacency, it is necessary and sufficient that it does so on each restriction $\Phi|_{V(F_i)} : V(F_i)\to V(G_{(r_{i-1},r_i),m})$, where $G_{(a,b),m}$ denotes the induced subgraph of $G_m$ with vertex set $\{v\in V(G_m) \ : \ a<f(b)<c, f(b)\in \cQ \}$. This occurs with (conditional) probability
\begin{align}
 \prod_{i=0}^p t_{ind}(F_i,G_{(r_i,r_{i+1}),m})\to \prod_{i=0}^pt_{ind}(F_i,\Gamma_{(r_i,r_{i+1})})
\end{align}
as $m\to \infty$. 

It remains to integrate over the possible values of $r_i=(f\circ \Phi)(v_i)$, $i=1,\dots, p$, sum out over all decompositions $\cP(F,p)$ and sum out over all $p=0,\dots, v(F)$, and multiply together the above probabilities. Isolating the term for $p=0$ we obtain
\begin{align}\label{eq:glimit5}
  \E[t_{\text{ind}}(F,G_m)]&\to \alpha^{v(F)}t_{\text{ind}}\left(F,\bdirsum{\frac{\alpha_i}{\alpha} \Gamma_i}{i\in \cQ} \right) \\
&\quad +\sum_{p=1}^{v(F)}\sum_{\cP(F,p)}\binom{v(F)}{p}(1-\alpha)^p\int_{\Delta\cI^p} \prod_{i=0}^{p} \alpha_{(r_i,r_{i+1})}^{v(F_{i})} t_{\text{ind}}\left(F_{i}, \Gamma_{(r_i,r_{i+1})}\right) \prod_{i=1}^p \mu_{\cI}(dr_i)
\end{align}

Denote by $\dag$ the right hand side of \eqref{eq:glimit5}. Comparing this to \eqref{eq:vee1}, we see that if we take $W_i$ to be any tournament kernel representing $\Gamma_i$ for $i\in \cQ$, that the direct sum $W=\left(\bdirsum{\alpha_iW_i}{i\in \cQ},\cI,\eta \right)$ satisfies $t_{ind}(F,W)=\dag$. Now let $\Gamma$ be the unique (deterministic) tournament limit representing $W$, so that $t_{ind}(F,\Gamma)=\dag$. By Corollary 3.2 of \cite{DiaconisJanson}, it follows from $\E[t_{ind}(F,G_m)]\to t(F,\Gamma)$, for any tournament $F$, that $G_m\xrightarrow{p} \Gamma$. Now define
\begin{align}
 \left(\bdirsum{\alpha_i\Gamma_i}{i\in \cQ},\cI,\eta \right):= \Gamma,
\end{align}
and note that this is well--defined by uniqueness of limits and the fact that $\Gamma$ is non--random.

\end{proof}

We mention a few special cases of the formulae in Theorem \ref{thm:glimit1} and Theorem \ref{thm:glimit2}.

\begin{itemize}
 \item If $F$ is irreducible, then it has only the trivial decomposition $F=F$, so the formula in Theorem \ref{thm:glimit1} reduces to
\begin{align}
t_{ind}(F,\Gamma)=\sum_{i\in \cQ}\alpha_i^{v(F)}t_{ind}(F,\Gamma_i). 
\end{align}

\item If $\alpha=\sum_{i\in \cQ}\alpha_i=1$, then the only term that remains in \eqref{eq:glimit2} is the first, so we have
\begin{align}
t_{\text{ind}}(F,\Gamma)=\alpha^{v(F)}t_{\text{ind}}\left(F,\bdirsum{\frac{\alpha_i}{\alpha} \Gamma_i}{i\in \cQ} \right) = t_{\text{ind}}\left(F,\bdirsum{\alpha_i \Gamma_i}{i\in \cQ} \right),
\end{align}
to which Theorem \ref{thm:glimit1} is applicable.

\item If the decomposition of $F$ in Theorem \ref{thm:tourdecomp1} has no singleton components, then the set $\cP(F,p)$ consists only of the trivial decomposition $F=F$, so again the only term that remains in \eqref{eq:glimit2} is the first and so
\begin{align}
t_{\text{ind}}(F,\Gamma)=\alpha^{v(F)}t_{\text{ind}}\left(F,\bdirsum{\frac{\alpha_i}{\alpha} \Gamma_i}{i\in \cQ} \right).
\end{align}

\item If $\alpha=\sum_{i\in \cQ}\alpha_i=0$, then always $\alpha_{(r_i,r_{i+1})}=0$, so, by convention,
\begin{align}
 \alpha_{(r_i,r_{i+1})}^{v(F_i)} = \begin{cases} 0 & \text{ if } v(F_i)>0 \\ 1 & \text{ if } v(F_i)=0. \end{cases}
\end{align}
Hence only the term with $p=V(F)$ can give a non--zero contribution to the sum (else there is some $F_i$ with $v(F_i)>0$, introducing a zero factor to each summand). Therefore
\begin{align}
t_{\text{ind}}(F,\Gamma)=\sum_{\cP(F,v(F))}\int_{\Delta\cI^{v(F)}}\prod_{i=1}^{v(F)}\mu_{\cI}(dr_i)= \begin{cases} \frac{1}{k!}, & \text{ if } F=\sT_k \\ 0, & \text{ otherwise,} \end{cases}
\end{align}
where the final equality follows since $F$ can only have a decomposition into $v(F)$ singletons if $F$ is transitive. In particular, note that $\sum_{i\in \cQ}\alpha_i=0$ implies that $\Gamma$ is transitive (see Theorems \ref{thm:acyclic} and \ref{thm:transeq}).
\end{itemize}

This completes our definition of direct sums of tournament limits. The following result shows that the notions of direct sums agree for tournament limits and tournament kernels.

\begin{theorem}\label{thm:existence}
For each $i\in \cQ$, let $W_i$ be some tournament kernel representing $\Gamma_i$. Then $\left(\bdirsum{\alpha_iW_i}{i\in \cQ},\cI,\eta \right)$ represents $\left(\bdirsum{\alpha_i\Gamma_i}{i\in \cQ},\cI,\eta \right)$.
\end{theorem}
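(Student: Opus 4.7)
The statement asserts that the notion of direct sum for tournament kernels is compatible with the notion of direct sum for tournament limits, in the sense that kernel--representation is preserved by the two constructions. Since tournament limits are determined by their induced homomorphism densities $t_{\text{ind}}(F,\cdot)$ over all finite tournaments $F$, the plan is simply to verify that $t_{\text{ind}}(F,W) = t_{\text{ind}}(F,\Gamma)$ for every finite tournament $F$, where $W = \left(\bdirsum{\alpha_iW_i}{i\in \cQ},\cI,\eta \right)$ and $\Gamma = \left(\bdirsum{\alpha_i\Gamma_i}{i\in \cQ},\cI,\eta \right)$. Both quantities are given by explicit formulas (Theorem \ref{thm:densvee2} and Theorem \ref{thm:glimit2} respectively), so the heart of the proof is a term-by-term comparison.

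First I would treat the case $\sum_{i\in\cQ}\alpha_i = 1$, where $\cI$ contributes nothing and $W$ is handled by Theorem \ref{thm:densvee1} and $\Gamma$ by Theorem \ref{thm:glimit1}. Both formulas are sums over the same index set $\cP(F,\cQ)$, with each term being $\prod_{i\in\cQ}\alpha_i^{v(F_i)}t_{\text{ind}}(F_i,\cdot_i)$, where the ``$\cdot_i$'' is $W_i$ on one side and $\Gamma_i$ on the other. Since by hypothesis $W_i$ represents $\Gamma_i$, we have $t_{\text{ind}}(F_i,W_i) = t_{\text{ind}}(F_i,\Gamma_i)$ for each finite $F_i$, so the two formulas agree term by term and the equality of $t_{\text{ind}}(F,W)$ and $t_{\text{ind}}(F,\Gamma)$ follows.

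For the general case $\sum_{i\in\cQ}\alpha_i = \alpha \leq 1$, I would invoke Theorem \ref{thm:densvee2} and Theorem \ref{thm:glimit2}. Both formulas consist of a leading $\alpha^{v(F)}$-term involving the renormalized direct sum $\bdirsum{(\alpha_i/\alpha)W_i}{i\in\cQ}$ (respectively $\bdirsum{(\alpha_i/\alpha)\Gamma_i}{i\in\cQ}$), together with an integral over $\Delta\cI^p$ summed over $p$ and over decompositions in $\cP(F,p)$, whose integrand involves factors $t_{\text{ind}}(F_i, W_{(r_i,r_{i+1})})$ respectively $t_{\text{ind}}(F_i, \Gamma_{(r_i,r_{i+1})})$. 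Note that the sub-objects $W_{(r_i,r_{i+1})}$ and $\Gamma_{(r_i,r_{i+1})}$ are themselves direct sums over $\cQ_{(r_i,r_{i+1})}\subseteq\cQ$ with normalized weights summing to $1$. By the first case already handled, $W_{(r_i,r_{i+1})}$ represents $\Gamma_{(r_i,r_{i+1})}$, so $t_{\text{ind}}(F_i, W_{(r_i,r_{i+1})}) = t_{\text{ind}}(F_i, \Gamma_{(r_i,r_{i+1})})$ for almost every $(r_1,\ldots,r_p) \in \Delta\cI^p$. A parallel remark applies to the leading term. Integrand-by-integrand equality then yields equality of the two expressions, hence $t_{\text{ind}}(F,W) = t_{\text{ind}}(F,\Gamma)$ for every finite tournament $F$, which proves that $W$ represents $\Gamma$.

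The only conceptual subtlety is the order in which the two cases are handled: the general case reduces to the $\alpha=1$ case when evaluating the subkernels $W_{(r_i,r_{i+1})}$ and sublimits $\Gamma_{(r_i,r_{i+1})}$, so it is essential to dispatch the $\alpha=1$ case first (for arbitrary, possibly countably infinite index sets). Apart from this ordering, the proof is essentially a mechanical comparison of the two formulas, the real computational content having already been absorbed into Theorems \ref{thm:densvee1}--\ref{thm:densvee2} and \ref{thm:glimit1}--\ref{thm:glimit2}.
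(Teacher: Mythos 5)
Your proposal is correct and is essentially the paper's own argument: the paper dispatches this theorem in one line by noting that it ``follows immediately from the correspondence between the formulae in Theorems \ref{thm:densvee1}--\ref{thm:densvee2} and \ref{thm:glimit1}--\ref{thm:glimit2}.'' You have simply made that term-by-term comparison explicit, including the (correct) observation that the general case reduces to the $\alpha=1$ case when handling the sub-sums $W_{(r_i,r_{i+1})}$ and $\Gamma_{(r_i,r_{i+1})}$.
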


This follows immediately from the correspondence between the formulae in Theorems \ref{thm:densvee1} and \ref{thm:densvee2} and the formulae in Thereoms \ref{thm:glimit1} and \ref{thm:glimit2}.
\section{Irreducible and transitive limits}
In this section, we seek to extend Theorem \ref{thm:irreducibletour} to the setting of tournament kernels. This will allow us to show how the irreducibility (to be made precise) of a tournament limit corresponds to the irreducibility of a representing tournament kernel and the irreducibility the induced random infinite tournament. 

\begin{definition}
 For any $B\subseteq \cS$ with $0<\mu(B)<1$, define the outneighbourhood of $B$ by 
\begin{align}
 N(B)=\{y\in \cS \ : \ \int_{B}W(x,y)d\mu(x)>0\}.
\end{align}
and define $N^m(B)=N(N^{m-1}(B))$, where $N^{1}(B)=N(B)$. For a singleton $x\in \cS$, define its outneighbourhood as 
\begin{align}
N(\{x\})=\{y \in \cS \ : \ W(x,y)>0 \}.
\end{align}
If $\mu(N(\{x\})$, we define $N^m(\{x\})=N^{m-1}(N(\{x\}))$ for $m>1$.
\end{definition}

The defining relation $W(x,y)+W(y,x)=1$ implies that $\mu(N(\{x\}))>0$ for almost all $x\in \cS$. We say that the kernel $W$ is \emph{strongly connected} if, for almost all $x\in \cS$, the set $A(x)=\bigcup_{i=1}^{\infty}N^i(\{x\})$ satisfies $\mu(A(x))=1$. Note also that $A\subseteq B$ implies $N(A)\subseteq N(B)$

In light of Theorem \ref{thm:irreducibletour}, we prove the following equivalence theorem for irreducible tournament kernels.

\begin{theorem} \label{thm:irreducibleW}
Let $W$ be a tournament kernel. The following statements are equivalent.
\begin{enumerate}
  \item $W$ is irreducible. \label{thm:irreducibleW1}
  \item $W$ is strongly connected. \label{thm:irreducibleW2}
  \item There does not exist a measurable subset $B\subseteq \cS$ with $0<\mu(B)< 1$ such that $\mu(N(B)\setminus B)=0$.\label{thm:irreducibleW3}
  \item There does not exist a measurable subset $B\subseteq \cS$ with $0<\mu(B)<1$ such that $\int_{B\times \cS}W(x,y)d\mu(x)d\mu(y)=\frac{\mu(B)^2}{2}$.\label{thm:irreducibleW4}
\end{enumerate}
\end{theorem}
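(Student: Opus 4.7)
The plan is to handle the algebraic equivalences \ref{thm:irreducibleW1}$\Leftrightarrow$\ref{thm:irreducibleW3}$\Leftrightarrow$\ref{thm:irreducibleW4} quickly by Fubini and the defining relation, and then focus the real work on the dynamic equivalence \ref{thm:irreducibleW1}$\Leftrightarrow$\ref{thm:irreducibleW2}. For \ref{thm:irreducibleW1}$\Leftrightarrow$\ref{thm:irreducibleW3}, the condition $\mu(N(B)\setminus B)=0$ translates via Fubini to $W=0$ a.e.\ on $B\times(\cS\setminus B)$, and by $W(x,y)+W(y,x)=1$ this is the same as $(\cS\setminus B)\prec B$. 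Replacing $B$ by its complement (which still has measure strictly between $0$ and $1$) shows such a $B$ exists iff $W$ is reducible. For \ref{thm:irreducibleW3}$\Leftrightarrow$\ref{thm:irreducibleW4}, I would split $\int_{B\times\cS}W = \int_{B\times B}W + \int_{B\times(\cS\setminus B)}W$, use the defining relation on $B\times B$ to get $\int_{B\times B}W=\mu(B)^2/2$, and observe that the equality in \ref{thm:irreducibleW4} is then equivalent to $\int_{B\times(\cS\setminus B)}W=0$, exactly the condition in \ref{thm:irreducibleW3}.

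The heart of the theorem is \ref{thm:irreducibleW1}$\Leftrightarrow$\ref{thm:irreducibleW2}. The key structural fact I would exploit is that $N$ distributes over countable unions, $N(\bigcup_i A_i)=\bigcup_i N(A_i)$, since $\int_{\bigcup_i A_i}W(\cdot,y)\,d\mu$ is positive iff some $\int_{A_i}W(\cdot,y)\,d\mu$ is. To prove \ref{thm:irreducibleW1}$\Rightarrow$\ref{thm:irreducibleW2}, I would first show that $\mu(N(\{x_0\}))>0$ for a.e.\ $x_0$: otherwise $S_0=\{x:\mu(N(\{x\}))=0\}$ would have positive measure, and on $S_0$ we would have $W(x,\cdot)=0$ a.e., hence $W(\cdot,x)=1$ a.e., giving $(\cS\setminus S_0)\prec S_0$ and contradicting irreducibility (or the defining identity if $\mu(S_0)=1$). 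Then $A(x_0)=\bigcup_{i\ge 1}N^i(\{x_0\})$ has positive measure and, by distributivity, $N(A(x_0))=\bigcup_{i\ge 2}N^i(\{x_0\})\subseteq A(x_0)$; condition \ref{thm:irreducibleW3} (already known to equal \ref{thm:irreducibleW1}) then forces $\mu(A(x_0))=1$, so $W$ is strongly connected.

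For \ref{thm:irreducibleW2}$\Rightarrow$\ref{thm:irreducibleW1} I would argue by contrapositive. Given a witness $B$ of reducibility, $W(y,x)=0$ for a.e.\ $(y,x)\in(\cS\setminus B)\times B$, so for a.e.\ $y\in\cS\setminus B$, $N(\{y\})\cap B$ is null. By induction on $i$, using distributivity and the same vanishing of $W$, $N^i(\{y\})\subseteq\cS\setminus B$ up to null for all $i$; hence $A(y)\subseteq\cS\setminus B$ up to null, so $\mu(A(y))\le 1-\mu(B)<1$ on the positive-measure set $\cS\setminus B$, contradicting strong connectedness.

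The main technical obstacle is the careful bookkeeping of null sets, both in the inductive step of \ref{thm:irreducibleW2}$\Rightarrow$\ref{thm:irreducibleW1} and in the preliminary verification that $\mu(N(\{x_0\}))>0$ for a.e.\ $x_0$ before the distributivity argument in \ref{thm:irreducibleW1}$\Rightarrow$\ref{thm:irreducibleW2} can run. The argument goes through because $N$ is defined through a positivity condition on an integral and is therefore invariant under modifications of its argument on null sets, so ``almost-containment'' in $\cS\setminus B$ is preserved under each application of $N$ and the countable-union structure of $A(x_0)$ does not amplify the null-set errors.
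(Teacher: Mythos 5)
Your proposal is correct and follows essentially the same route as the paper: the equivalences with \ref{thm:irreducibleW3} and \ref{thm:irreducibleW4} reduce via Fubini and the relation $W(x,y)+W(y,x)=1$ to the vanishing of $W$ on $B\times(\cS\setminus B)$, the implication towards strong connectedness rests on $N(A(x))\subseteq A(x)$ (the paper uses this implicitly where you state the countable-union distributivity explicitly), and the converse is the same induction keeping $N^i(\{y\})$ inside the complement of the reducing set. The only difference is cosmetic: you arrange the implications slightly differently and prove \ref{thm:irreducibleW3}$\Leftrightarrow$\ref{thm:irreducibleW4} directly rather than through \ref{thm:irreducibleW1}.
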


\begin{proof}
\begin{description}
 \item[$(\ref{thm:irreducibleW1}) \Longrightarrow (\ref{thm:irreducibleW3})$]  
Suppose $W$ is irreducible and, for contradiction, that there exists some $B\subseteq \cS$ with $0<\mu(B)<1$ such that $\mu(N(B)\setminus B)=0$. Then, for $\mu$--all $y\notin N(B)$ we have $\int_{B}W(x,y)d\mu(x)=0$, whence $\int_{B\times (\cS\setminus N(B))}W(x,y)d\mu(x)d\mu(y)=0$. But then $W(x,y)=0$ for almost all $(x,y)\in B\times (\cS\setminus N(B))$. But $B\times (\cS\setminus B) \subseteq B\times (\cS\setminus N(B))$ (up to zero measure), so $W$ is reducible; a contradiction.

 \item[$(\ref{thm:irreducibleW1}) \Longleftrightarrow (\ref{thm:irreducibleW4})$] 
We prove the contrapositives. Suppose that there is such a set $B$. Then
\begin{align}
 \int_{B^2}W(x,y)d\mu(x)d\mu(y) =\frac{\mu(B)^2}{2}=\int_{B\times \cS}W(x,y)d\mu(x)d\mu(y),
\end{align}
which implies that $W(x,y)=0$ for almost all $(x,y)\in B\times (\cS\setminus B)$. Thus $W$ is reducible. The other direction is identical.
                                                                                                                                           
\item[$(\ref{thm:irreducibleW3}) \Longrightarrow (\ref{thm:irreducibleW2})$]
We prove the contrapositive statement. First, for almost every $x\in \cS$ we have $\mu(A(x))\geq \mu(N(\{x\}))>0$. Suppose $W$ is not strongly connected. Then there exists some non--null set of $x\in \cS$ for which $0<\mu(A(x))<1$. It follows from the definition of $A(x)$ that $\mu(N(A(x))\setminus A(x)=0$ for each such $x$. Take $B=A(x)$.

\item[$(\ref{thm:irreducibleW2}) \Longrightarrow (\ref{thm:irreducibleW1})$]
We prove the contrapositive statement, so suppose $W$ is reducible. Since $W$ is reducible, there exists $B\subseteq \cS$ with $0<\mu(B)<1$ such that $W(x,y)=0$ for a.e. $(x,y)\in B\times \cS\setminus B$. We prove by induction that there exists a non--null set of $x\in \cS$ and  such that $\mu(N^i(x)\setminus B)=0$ for all $i\geq 1$. For almost all $x\in B$ we have $\mu(N(x))>0$ and $\mu(N(x)\setminus B)=0$, so the base case holds. For the induction step, suppose $\mu(N^{i}(x)\setminus B)=0$. Then

\begin{align}
 \mu(N^{i+1}(x)\setminus B)&=\mu(N(N^i(x))\setminus B) \\
&=\mu(\{y\in \cS \ : \ \int_{N^i(x)}W(x,y)d\mu(x)>0 \}\setminus B) \\
&\leq \mu(\{y\in \cS \ : \ \int_{B}W(x,y)d\mu(x)>0 \}\setminus B) \\
&=\mu(N(B)\setminus B)=0,
\end{align}
which completes the induction step. But this implies that $\mu\left(\bigcup_{i=1}^{\infty}N^{i}(x)\setminus B \right)=0$, so $0<\mu\left(\bigcup_{i=1}^{\infty}N^{i}(x)\right)\leq \mu(B)<1$.
 \end{description}
\end{proof}

We now define what we mean by irreducible and transitive tournament limits, and show that these are well--defined properties, in the sense that a tournament limit is irreducible (transitive) if and only if its representing kernel is irreducible (transitive) if and only if its induced infinite random graph is irreducible (transitive).

 Given $0<r<1$ and two tournament limits $\Gamma,\Gamma'$ , we define $r \Gamma \dirsum{}{} (1-r)\Gamma'$ as the direct sum $\bdirsum{\alpha_i\Gamma_i}{i\in \{0,r\}}$ where $(\alpha_0,\alpha_r)=(r,1-r),(\Gamma_0,\Gamma_r)=(\Gamma,\Gamma')$ (and $\eta=\id : \{0,r\}\to [0,1]$).

\begin{definition}
 A tournament limit $\Gamma$ is said to be \emph{reducible} if there exists $r>0$ and tournament limits $\Gamma_1,\Gamma_2$ such that $\Gamma=r\Gamma_1 \dirsum{}{} (1-r)\Gamma_2$.
\end{definition}

The proof of the following theorem is similar to a few results from \cite{Janson2008}, so we omit the proof and refer the reader to the relevant sections of that paper. 
\begin{theorem} \label{thm:eqirr}
 Let $\Gamma$ be a tournament limit represented by a tournament kernel $W$. Then the following are equivalent.
\begin{enumerate}[(a)]
 \item $\Gamma$ is irreducible. \label{thm:eqirr1}
  \item $W$ is irreducible. \label{thm:eqirr2}
  \item $G(\infty,\Gamma)=G(\infty,W)$ is a.s. irreducible. \label{thm:eqirr3}
\end{enumerate}
\end{theorem}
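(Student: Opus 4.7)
The plan is to establish $(b) \Leftrightarrow (c)$ first, and then use it to bridge $(a) \Leftrightarrow (b)$. The implication $(b) \Rightarrow (a)$ is immediate from Theorem \ref{thm:existence}: if $W = rW_1 \dirsum{}{} (1-r)W_2$, then the same formal direct sum at the limit level gives $\Gamma = r\Gamma_1 \dirsum{}{} (1-r)\Gamma_2$, where $\Gamma_i$ is the tournament limit represented by $W_i$. Similarly, $(b) \Rightarrow (c)$ is a direct construction: writing $\cS = \cS_1 \sqcup \cS_2$ with $\mu(\cS_k) = r_k > 0$ and $W \equiv 1$ a.e.\ on $\cS_1 \times \cS_2$, the strong law of large numbers ensures that the i.i.d.\ sample $(X_i)_{i \geq 1}$ puts infinitely many points in each $\cS_k$ almost surely, and all edges between the two resulting vertex classes go the same way a.s. Hence $G(\infty, W)$ inherits a nontrivial direct-sum decomposition, and is a.s.\ reducible.

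For $(a) \Rightarrow (c)$, I would exploit that the law of $G(\infty, \Gamma) = G(\infty, W)$ depends only on $\Gamma$: given a decomposition $\Gamma = r\Gamma_1 \dirsum{}{} (1-r)\Gamma_2$, pick any representing kernels $W_i$ of $\Gamma_i$, so that by Theorem \ref{thm:existence} the kernel $W' = rW_1 \dirsum{}{} (1-r)W_2$ also represents $\Gamma$ and yields the same random tournament in distribution; applying $(b) \Rightarrow (c)$ to $W'$ then shows that $G(\infty, \Gamma)$ is a.s.\ reducible. Combined with $(c) \Rightarrow (b)$ this also yields $(a) \Rightarrow (b)$, closing the triangle.

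The main obstacle, and the only nontrivial direction, is $(c) \Rightarrow (b)$, or equivalently by contrapositive: $W$ irreducible implies $G(\infty, W)$ is a.s.\ irreducible. My approach is through Theorem \ref{thm:irreducibleW}, which lets me replace irreducibility of $W$ by strong connectivity, i.e.\ $\mu(A(x)) = 1$ for $\mu$-a.e.\ $x \in \cS$, where $A(x) = \bigcup_{m \geq 1} N^m(\{x\})$. I would then show that almost surely, for any two vertices $i \neq j$ of $G(\infty, W)$ there is a finite directed path from $i$ to $j$; by Theorem \ref{thm:irreducibletour} this is equivalent to a.s.\ irreducibility of the countable random tournament. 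Fixing the labels $X_i, X_j$ and an integer $m$ with $X_j \in N^m(\{X_i\})$, the task reduces to showing that the remaining i.i.d.\ sample $(X_k)_{k \neq i,j}$ a.s.\ supplies intermediate vertices whose labels lie in appropriate bridging sets $N^{\ell}(\{X_i\}) \cap (N^{m-\ell})^{-1}(\{X_j\})$ and for which the $m$ random edges are realised in the correct direction. A Borel--Cantelli or second-moment argument across sufficiently many candidate $m$-tuples, followed by a countable union over pairs $(i,j)$, gives strong connectivity a.s.\ (a symmetric argument handles paths from $j$ to $i$). The delicate point is the transition between the \emph{analytic} strong connectivity of $W$ (chains of $\cS$-points with $W > 0$) and the \emph{discrete} strong connectivity of $G(\infty, W)$ (chains of actual vertices with realised edges); ensuring that these chains are realised almost surely, rather than merely with positive probability, is precisely where the argument mirrors the graphon analogue in Janson \cite{Janson2008} that the paper refers to.
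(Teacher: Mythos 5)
Your argument is correct and is essentially the paper's approach: the paper's own proof simply cites Theorems 1.16 and 1.19 (via Lemma 5.2) of Janson \cite{Janson2008}, and your chain-realization argument for ``$W$ irreducible $\Rightarrow$ $G(\infty,W)$ a.s.\ irreducible'' (passing through strong connectivity via Theorem \ref{thm:irreducibleW} and a block/second-moment argument to realise the bridging chains) is precisely the content hidden behind that citation. One presentational caveat: each implication you name is in fact the contrapositive of a \emph{different} implication than the one you prove (e.g.\ what you call $(b)\Rightarrow(a)$, namely ``$W$ reducible implies $\Gamma$ reducible'', is really $(a)\Rightarrow(b)$), but the implications you actually establish --- $(a)\Rightarrow(b)$, $(b)\Rightarrow(c)$ and $(c)\Rightarrow(a)$ --- still close the cycle, so the proof is logically complete.
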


\begin{proof}
 \begin{description}
  \item[$\ref{thm:eqirr1}\Longleftrightarrow \ref{thm:eqirr3}$] This corresponds to Theorem 1.19 of \cite{Janson2008}, which uses Lemma 5.2 of the same paper. The only difference is that we need directed paths in both directions between any pair of vertices, but the same proof works in our situation.
  \item[$\ref{thm:eqirr1}\Longleftrightarrow \ref{thm:eqirr2}$] This corresponds to Theorem 1.16 of \cite{Janson2008}.  
 \end{description}
\end{proof}

If $\Gamma$ is reducible, it does not follow that $G(n,\Gamma)$ is reducible. However, one can show that it is reducible with probability at least $1-e^{-\Omega(n)}$. Similarly, if $\Gamma$ is irreducible, it does not follow that $G(n,\Gamma)$ is irreducible. For instance, let $\Gamma$ be the tournament limit represented by the kernel in Figure \ref{fig:irreduciblekernel} with the subkernels on the diagonal being the transitive kernels. In this case $G(n,\Gamma)$ is reducible with probability at least $3(2/3)^n>0$.



Having shown that tournament kernels and tournament limits agree on the notion of irreducibility, we show that the same holds for transitivity. In this case however, the equivalence extends to the finite random graphs $G(n,\Gamma)$. 

\begin{definition}
 A tournament limit $\Gamma$ is said to be \emph{transitive} if $\Gamma=r\Gamma\dirsum{}{} (1-r)\Gamma$ for all $r\in [0,1]$.
\end{definition}

\begin{theorem}\label{thm:transeq}
 Let $\Gamma$ be a tournament limit represented by a tournament kernel $W$. Then the following are equivalent.
\begin{enumerate}[(i)]
 \item $\Gamma$ is transitive. \label{thm:transeq1}
 \item $W$ is transitive. \label{thm:transeq2}
 \item $G(\infty,\Gamma)=G(\infty,W)$ is a.s. transitive. \label{thm:transeq3}
 \item For each $n\geq 1$, $G(n,\Gamma)=G(n,W)$ is a.s. transitive. \label{thm:transeq4}
\end{enumerate}
\end{theorem}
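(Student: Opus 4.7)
The plan is to use (ii) as a hub, leveraging Theorem \ref{thm:acyclic} which characterises transitivity of $W$ via the vanishing of $t(\sC_3, W)$ and via equivalence to $W_T$. First I would dispatch the three ``random graph'' equivalences (ii) $\Leftrightarrow$ (iii) $\Leftrightarrow$ (iv), which follow from previously established machinery, and then handle the substantive (i) $\Leftrightarrow$ (ii).

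For (ii) $\Rightarrow$ (iv): by Lemma \ref{lem:weakiso}, $W$ is equivalent to $W_T$, and $G(n, W_T)$ just orders $n$ i.i.d.\ $\mathrm{Uniform}[0,1]$ samples, hence is a.s.\ transitive. For (iv) $\Rightarrow$ (iii): transitivity is a property of triples of vertices, so a countable tournament is transitive iff every induced $3$-vertex subtournament is transitive, which reduces the infinite case to the finite case. For (iii) $\Rightarrow$ (ii): $G(3, W)$ being a.s.\ transitive implies $t_{\text{ind}}(\sC_3, W) = \pP[G(3,W) = \sC_3] = 0$, so $W$ is transitive by Theorem \ref{thm:acyclic}.

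For (ii) $\Rightarrow$ (i): using $W$ equivalent to $W_T$, observe that for any $r \in (0,1)$ the kernel $r W_T \dirsum{}{} (1-r) W_T$ is again acyclic, hence by Theorem \ref{thm:acyclic} equivalent to $W_T$ and therefore to $W$. By Theorem \ref{thm:existence}, $r\Gamma \dirsum{}{} (1-r)\Gamma$ is represented by $rW \dirsum{}{} (1-r)W$, which is equivalent to $W$, and so $r\Gamma \dirsum{}{} (1-r)\Gamma = \Gamma$. This is precisely transitivity of $\Gamma$.

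The main obstacle, and the only step requiring genuine computation, is (i) $\Rightarrow$ (ii). Taking $r = 1/2$ in the definition of transitivity of $\Gamma$ gives $\Gamma = (1/2)\Gamma \dirsum{}{} (1/2)\Gamma$. Since $\sC_3$ is irreducible (no partition of its three vertices respects the cyclic orientation), the only elements of $\cP(\sC_3, \{0, 1/2\})$ are the two trivial decompositions placing $\sC_3$ entirely on one side and $\emptyset$ on the other. The induced-density formula of Theorem \ref{thm:glimit1} then collapses to
\begin{align}
t_{\text{ind}}(\sC_3, \Gamma) = 2 \cdot (1/2)^3 \cdot t_{\text{ind}}(\sC_3, \Gamma) = \tfrac{1}{4} t_{\text{ind}}(\sC_3, \Gamma),
\end{align}
forcing $t_{\text{ind}}(\sC_3, \Gamma) = t(\sC_3, W) = 0$, and Theorem \ref{thm:acyclic} completes the proof.
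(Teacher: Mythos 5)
Your proposal is correct and follows essentially the same route as the paper: (i)$\Rightarrow$(ii) via the density identity $t_{\text{ind}}(\sC_3,\Gamma)=\bigl(r^3+(1-r)^3\bigr)t_{\text{ind}}(\sC_3,\Gamma)$ from Theorem \ref{thm:glimit1} (the paper keeps $r$ general where you take $r=1/2$), (ii)$\Rightarrow$(i) via the representation by $W_T$ together with Theorem \ref{thm:existence} and uniqueness of limits, and the random-graph equivalences via the $t(\sC_3,\cdot)=0$ characterisation in Theorem \ref{thm:acyclic}. The only cosmetic difference is the orientation of the cycle through (ii), (iii), (iv).
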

\begin{proof}
\ref{thm:transeq1} $\Longrightarrow$ \ref{thm:transeq2}.
 By Theorem \ref{thm:glimit1} (using the fact that there is no non--trivial decomposition of $\sC_3$) we have $t_{ind}(\sC_3,\Gamma)=r^3 t_{ind}(\sC_3,\Gamma)+(1-r)^3t_{ind}(\sC_3,\Gamma)$ for any $r\in [0,1]$. This can only hold if $t_{ind}(\sC_3,\Gamma)=0$, which implies that $t_{ind}(\sC_3,W)=0$. Theorem \ref{thm:acyclic} now says that $W$ is transitive. 

\ref{thm:transeq2} $\Longrightarrow$ \ref{thm:transeq1}.
 Take any $r\in [0,1]$. By Theorem \ref{thm:acyclic} we may assume $W=\mathbbm{1}_{\{x\leq y\}}:([0,1]^2,\Leb)\to [0,1]$. For this choice it holds that $W=r W \dirsum{}{} (1-r)W$. Hence $\Gamma$ is also represented by $r W \dirsum{}{} (1-r)W$, but so is $r \Gamma \dirsum{}{}(1-r)\Gamma$ by Theorem \ref{thm:existence}. Uniqueness of tournament limits gives that $\Gamma=r \Gamma \dirsum{}{}(1-r)\Gamma$.

\ref{thm:transeq3} $\Longrightarrow$ \ref{thm:transeq4}.
Trivial.

\ref{thm:transeq4} $\Longrightarrow$ \ref{thm:transeq2}. Recall that $G(n,\Gamma)\to \Gamma$. Since $G(n,\Gamma)$ is a.s. transitive, Lemma \ref{lem:acycliconv} implies that $\Gamma$ is represented by $W_T:([0,1]^2,\Leb)\to [0,1]$ given by $W_T=\mathbbm{1}_{\{x\geq y\}}$. Therefore $W$ and $W_T$ are equivalent, so $W$ is transitive by Theorem \ref{thm:acyclic}.

\ref{thm:transeq2} $\Longrightarrow$ \ref{thm:transeq3}. 
Use the fact that $W$ is equivalent to $W_T$.
\end{proof}

Since any transitive tournament limit is represented by the kernel $\mathbbm{1}_{\{x\geq y\}}$, the following corollary follows.

\begin{corollary}
There is a unique transitive tournament limit.
\end{corollary}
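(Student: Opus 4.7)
The plan is to chain together the equivalences established earlier. Let $\Gamma$ be any transitive tournament limit, and let $W$ be a tournament kernel representing $\Gamma$ (such a $W$ exists by Theorem \ref{thm:equiv}).

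First I would invoke Theorem \ref{thm:transeq}, specifically the implication \ref{thm:transeq1}$\Rightarrow$\ref{thm:transeq2}, to conclude that $W$ is a transitive tournament kernel. Then I would apply Theorem \ref{thm:acyclic}, specifically the implication \ref{thm:acyclic1}$\Rightarrow$\ref{thm:acyclic4}, to conclude that $W$ is equivalent to the specific kernel $W_T:([0,1]^2,\Leb)\to[0,1]$ defined by $W_T(x,y)=\mathbbm{1}_{\{x\leq y\}}$. Since equivalent kernels represent the same tournament limit by definition, this forces $\Gamma=\Gamma_{W_T}$.

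As $\Gamma$ was an arbitrary transitive tournament limit, every transitive tournament limit coincides with $\Gamma_{W_T}$, proving uniqueness. Existence is immediate since $W_T$ is itself a transitive kernel (it is even literally of the form in \ref{thm:acyclic4}), so $\Gamma_{W_T}$ is a transitive tournament limit by Theorem \ref{thm:transeq}, \ref{thm:transeq2}$\Rightarrow$\ref{thm:transeq1}.

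There is essentially no obstacle here, since all the real work has been done in Theorems \ref{thm:acyclic} and \ref{thm:transeq}; the corollary is a bookkeeping consequence of the fact that transitivity for limits reduces to transitivity for kernels, and all transitive kernels are equivalent to the single canonical kernel $W_T$.
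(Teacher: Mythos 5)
Your argument is correct and is exactly the route the paper takes: the corollary is stated as an immediate consequence of the fact that every transitive tournament limit is, via Theorems \ref{thm:transeq} and \ref{thm:acyclic}, represented by the single canonical kernel $W_T$. Your write-up just makes the paper's one-line justification explicit, including the (easy) existence half.
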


\section{Decompositions of tournament limits}

Recall that each tournament kernel $W:\cS^2 \to [0,1]$ has a unique decomposition into irreducible components with a ``transitive`` set interlaced between these components. Uniqueness of the decomposition of tournament limits does not (easily) follow from the uniqueness of the decomposition of tournament kernels. This is because each tournament limit can be represented by many different tournament kernels, and it is not clear that two equivalent tournament kernels have ``the same'' decomposition. We use Theorem \ref{thm:tourdecomp1} along with the uniqueness of the distribution of $G(\infty,\Gamma)$ to get around this problem.

\begin{theorem}\label{thm:limitdecomp}
Each tournament limit $\Gamma \in \widehat{\cT}$ can be decomposed as a direct sum $\left(\bdirsum{\alpha_i\Gamma_i}{i\in \cQ},\cI, \mu_{\cI}, \eta \right)$ where $\cQ$ is countable, $(\cI,\mu_{\cI})$ is a non--atomic probability space, each $\Gamma_i$, $i\in \cQ$ is irreducible, and $\eta:\cQ\cup \cI\to [0,1]$ is such that $\{[\eta(i),\eta(i)+\alpha_i)\}_{i\in \cQ}\cup \eta(\cI)$ is a partition of $[0,1]$ (up to null sets) and $\eta|_{\cI}$ is injective (up to null sets).

This decomposition is unique in the following sense. If $\left(\bdirsum{\alpha'_i\Gamma'_i}{i\in \cQ'},\cI',\mu'_{\cI'},\eta' \right)$ is another decomposition of $\Gamma$, then there exists a bijection $f:\cQ \to \cQ'$ such that $\eta(i)<\eta(j)$ if and only if $\eta'(f(i))<\eta'(f(j))$ and such that $\alpha_i=\alpha'_{f(i)}$, $\Gamma_i=\Gamma'_{f(i)}$ and 
\begin{align}
 \mu_{\cI}\left(\{j\in \cI \ : \ \eta(j)<\eta(i) \} \right) = \mu'_{\cI'}\left(\{j\in \cI' \ : \ \eta(j)<\eta(f(i)) \} \right)
\end{align}
for any $i\in \cQ$.
\end{theorem}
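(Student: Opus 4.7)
The plan is to reduce both existence and uniqueness to results already established for tournament kernels and countable tournaments, using the random infinite tournament $G(\infty,\Gamma)$ as the canonical bridge for uniqueness. Existence will be an immediate consequence of the kernel-level machinery: I take any tournament kernel $W$ representing $\Gamma$ and apply Theorem \ref{thm:Wdecomp} to obtain a direct-sum decomposition of $W$ into irreducible sub-kernels $W_i$, $i\in\cQ$, with weights $\alpha_i=\mu(\cS_i)$, a non-atomic transitive part on $\cI$, and the embedding $\eta$. Letting $\Gamma_i$ be the tournament limit represented by $W_i$, Theorem \ref{thm:eqirr} guarantees that each $\Gamma_i$ is irreducible, and Theorem \ref{thm:existence} states that the kernel decomposition represents precisely $\left(\bdirsum{\alpha_i\Gamma_i}{i\in\cQ},\cI,\mu_\cI,\eta\right)$, which therefore equals $\Gamma$.

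For uniqueness, given any decomposition $D=\left(\bdirsum{\alpha_i\Gamma_i}{i\in\cQ},\cI,\mu_\cI,\eta\right)$, I will construct $G(\infty,\Gamma)$ as follows: sample i.i.d.\ labels $Y_n\in\cQ\cup\cI$ with $\pP[Y_n=i]=\alpha_i$ for $i\in\cQ$ and $Y_n$ conditionally drawn from $\mu_\cI$ given $Y_n\in\cI$; place an independent copy of $G(\infty,\Gamma_i)$ on each class $\{n:Y_n=i\}$; and orient every edge between distinct labels according to the $\eta$-order. This is the construction underlying Theorem \ref{thm:existence}, so the resulting random tournament has the law of $G(\infty,\Gamma)$. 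Applying Theorem \ref{thm:tourdecomp1} to a sample yields an almost-surely unique ordered decomposition of $G(\infty,\Gamma)$ into irreducible components. Since $\alpha_i>0$ and $\Gamma_i$ is irreducible, each class $\{n:Y_n=i\}$ is almost surely infinite and strongly connected by Theorem \ref{thm:eqirr}, and since all edges between distinct labels are one-directional, no such class can merge with another class or with a singleton. Meanwhile, the vertices with $Y_n\in\cI$ are a.s.\ pairwise distinct in $\eta$-value by non-atomicity of $\mu_\cI$, hence each forms a singleton component. Therefore the non-singleton irreducible components of $G(\infty,\Gamma)$ are in $\eta$-order-preserving bijection with $\cQ$.

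Carrying out the same construction for a second decomposition $D'$ with labels $Y'_n$ yields an analogous bijection from $\cQ'$ onto the \emph{same} intrinsically-defined set of non-singleton components of $G(\infty,\Gamma)$; composing these gives the required order-preserving bijection $f:\cQ\to\cQ'$. The three remaining equalities will follow from the strong law of large numbers applied to the construction: (i) $\frac{1}{N}|\{n\le N:Y_n=i\}|\to\alpha_i$ a.s., and since this proportion is measurable in $G(\infty,\Gamma)$ alone, $\alpha_i=\alpha'_{f(i)}$; (ii) the induced subtournament on $\{n:Y_n=i\}$ has the law of $G(\infty,\Gamma_i)$ under $D$ and of $G(\infty,\Gamma'_{f(i)})$ under $D'$, whence $\Gamma_i=\Gamma'_{f(i)}$; (iii) $\frac{1}{N}|\{n\le N:Y_n\in\cI,\,\eta(Y_n)<\eta(i)\}|\to(1-\alpha)\mu_\cI(\{j\in\cI:\eta(j)<\eta(i)\})$ a.s., and since $\alpha=\sum_i\alpha_i=\sum_i\alpha'_i=\alpha'$ by (i), the claimed measure identity follows.

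The main obstacle will be the structural claim that the classes $\{n:Y_n=i\}$ are precisely the non-singleton components in the decomposition from Theorem \ref{thm:tourdecomp1}: one must rule out that such a class could be split, absorb a singleton, or merge with another class into a larger irreducible component of $G(\infty,\Gamma)$. This rests on the directional block structure enforced by $\eta$ between distinct labels, together with the almost-sure strong connectedness of $G(\infty,\Gamma_i)$ supplied by Theorem \ref{thm:eqirr}.
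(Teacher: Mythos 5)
Your proposal is correct and follows essentially the same route as the paper: existence via Theorem \ref{thm:Wdecomp} combined with Theorems \ref{thm:eqirr} and \ref{thm:existence}, and uniqueness by realising $G(\infty,\Gamma)$ from the direct-sum structure, identifying the label classes with the canonical irreducible components of Theorem \ref{thm:tourdecomp1} (infinite/non-singleton for $\cQ$, singletons for $\cI$), and applying the strong law of large numbers. The only cosmetic difference is in recovering the interlacing measure: you use the asymptotic density of singleton components preceding a given component, whereas the paper computes the probability that vertex $1$ dominates that component; both read the same quantity off the law of $G(\infty,\Gamma)$.
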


Note that the last equality uniquely determines how $\eta(\cI)$ is interlaced with $\eta(\cQ)$.

\begin{proof}
 \textit{Existence of decomposition.}
This follows by taking the decomposition in Theorem \ref{thm:Wdecomp} (with the map $\eta$ coming from there) and applying Theorem \ref{thm:existence}.


 \textit{Uniqueness.}
 We first prove that $\Gamma$ uniquely determines the pairs $(\alpha_i,\Gamma_i)_{i\in \cQ}$ up to order isomorphism of $\eta(\cQ)$. 

By the existence part of this proof, $\Gamma$ is represented by the tournament kernel $W=\left(\bigoplus_{i\in \cQ}\alpha_i,W_i,\cI,\mu_{\cI} \right)$, where $W_i$ represents $\Gamma_i$ for all $i\in \cQ$. Moreover, the $W_i$ are irreducible by Theorem \ref{thm:eqirr}. 

Recall that $\Gamma$ determines the distribution of $G(\infty,\Gamma)=G(\infty,W)$, and recall the construction of this random graph in Section 2. Let $V_i=\{k \ : \ X_k\in \cS_i \}$, $i\in \cQ$, and let $V_{\cI}=\{k \ : \ X_k\in \cI\}$. By the strong law of large numbers, 
\begin{align}
 &\lim_{n\to \infty} \frac{|V_i\cap [n]|}{n}=\pP[X_1\in \cS_i]=\alpha_i, \qquad \qquad i\in \cQ \\
 &\lim_{n\to \infty} \frac{|V_{\cI}\cap [n]|}{n}=\pP[X_1\in \cI]=1-\sum_{i\in \cQ}\alpha_i.
\end{align}
In particular, this implies that $|V_i|=\infty$ a.s. for any $i\in \cQ$, and provided $\sum_{i\in \cQ}\alpha_i<1$, that $|V_{\cI}|=\infty$ a.s. For any $i\in \cQ$, let $G_i$ be the induced subgraph $G(\infty,W)|_{V_i}$. Its vertices are chosen independently from $\cS_i$ according to $\mu_i$, whence $G(\infty,W)|_{V_i} \stackrel{d}{=} G(\infty,W_i) = G(\infty,\Gamma_i)$ (after relabeling of the vertices). Each $G_i$, $i\in \cQ$, is therefore irreducible by Theorem \ref{thm:eqirr}, so we obtain a decomposition 
\begin{align}
 G(\infty,\Gamma)=\bigoplus_{i\in \eta(\cQ) \cup \eta(V)}G_i
\end{align}
where $V=\{X_k \ : \ k\in V_{\cI} \}$, such that each $G_i$, $i\in \cQ$, is irreducible and has infinitely many vertices and each $G_i$, $i\in V$, consists of a single vertex. By Theorem \ref{thm:tourdecomp1}, this decomposition is unique up to order isomorphism of $\eta(\cQ \cup V)$. In particular, forgetting about $V$ (this presents no problem, since $V$ and $\cQ$ are different in the sense that $\cQ$ enumerates infinite components, while $V$ enumerates finite components), the set $\eta(\cQ)$ is unique up to order isomorphism. Hence $(\alpha_i, \Gamma_i)_{i\in \cQ}$ is uniquely determined by $\Gamma$, up to order isomorphism of the set $\eta(\cQ)$. 

Suppose now that we have two decompositions 
\begin{align}
 \left(\bigoplus_{i\in \cQ}\alpha_i\Gamma_i,\cI, \mu_{\cI},\eta \right) 
\qquad \text{and}
\qquad 
\left(\bigoplus_{i\in \cQ'}\alpha'_i\Gamma'_i,\cI',\mu'_{\cI'},\eta'\right)
\end{align}
both equal to some tournament limit $\Gamma$. By the above there exists a bijection $f:\cQ \to \cQ'$ such that $\alpha'_{f(i)}=\alpha_i$ and $\Gamma'_{f(i)}=\Gamma_i$ for all $i\in \cQ$. Moreover, $f$ preserves order in the sense that $\eta(i)<\eta(j)$ if and only if $\eta'(f(i))<\eta'(f(j))$. 
These decompositions induce decompositions of $G(\infty,\Gamma)$ like above. For any $i\in \cQ$, denote by $A_i$ the event that $(v_1,v)\in E(G(\infty,\Gamma))$ for all $v\in V(G_i)$, and for any $j\in \cQ'$ let $A'_j$ denote the event that $(v_1,v)\in E(G(\infty,\Gamma))$ for all $v\in V(G'_j)$. In both these cases $v_1$ denotes the vertex labelled $1$ in $G(\infty,\Gamma)$. By the construction of $G(\infty,\Gamma)$, it follows that
\begin{align}
 \pP[A_i]&=\sum_{\substack{j\in \cQ \\ \eta(j)<\eta(i)}}\alpha_j + (1-\alpha)\mu_{\cI}\{j\in \cI \ : \ \eta(j)<\eta(i) \} \\
 \pP[A'_{f(i)}]&=\sum_{\substack{j\in \cQ' \\ \eta'(j)<\eta'(f(i))}}\alpha'_j + (1-\alpha')\mu'_{\cI'}\{j\in \cI' \ : \ \eta'(j)<\eta'(f(i)) \},
\end{align}
where $\alpha=\sum_{i\in \cQ}\alpha_i$ and $\alpha'=\sum_{i\in \cQ'}\alpha_i'$. We have shown already that $\alpha=\alpha'$. Since $f$ is a bijection that preserves order of the pair $(\eta,\eta')$, it holds that 
\begin{align}
 \sum_{\substack{j\in \cQ' \\ \eta'(j)<\eta'(f(i))}}\alpha'_j = \sum_{\substack{j\in \cQ \\ \eta(j)<\eta(i)}}\alpha_i,
\end{align}
for any $i\in \cQ$. Therefore
\begin{align}
 \mu_{\cI}(j\in \cI \ : \ \eta(j)<\eta(i) \} = \mu'_{\cI'}(j\in \cI' \ : \ \eta'(j)<\eta'(f(i)) \},
\end{align}
for any $i\in \cQ$.
\end{proof}

\section*{Acknowledgements}
The author wishes to thank Svante Janson for meticulously reading through and suggesting significant improvements to numerous previous versions of this manuscript.

\bibliographystyle{abbrv}

\bibliography{../../references}

\begin{thebibliography}{10}

\bibitem{Boeckner}
D.~Boeckner.
\newblock {\em Directed Graph Limits and Directed Threshold Graphs}.
\newblock PhD thesis, University of Nebraska, 2013.

\bibitem{Borgs2008}
C.~Borgs, J.~T. Chayes, L.~Lov{\'a}sz, V.~T. S{\'o}s, and K.~Vesztergombi.
\newblock Convergent sequences of dense graphs {I}: Subgraph frequencies,
  metric properties and testing.
\newblock {\em Advances in Mathematics}, 219(6):1801--1851, 2008.

\bibitem{Borgs2012}
C.~Borgs, J.~T. Chayes, L.~Lov{\'a}sz, V.~T. S{\'o}s, and K.~Vesztergombi.
\newblock Convergent sequences of dense graphs {II}: Multiway cuts and
  statistical physics.
\newblock {\em Annals of Matematics}, 176(1):151--219, 2012.

\bibitem{CourcelleDelhomme2008}
B.~Courcelle and C.~Delhomm{\'e}.
\newblock The modular decomposition of countable graphs. {D}efinition and
  construction in monadic second order logic.
\newblock {\em Theoretical Computer Science}, 394:1--38, 2008.

\bibitem{DiaconisHolmesJanson}
P.~Diaconis, S.~Holmes, and S.~Janson.
\newblock Threshold graph limits and random threshold graphs.
\newblock {\em Internet Math.}, 5(3):267--320, 2008.

\bibitem{DiaconisJanson}
P.~Diaconis and S.~Janson.
\newblock Graph limits and exchangeable random graphs.
\newblock {\em Rend. Mat. Appl. (7)}, 7(1):33--61, 2008.

\bibitem{HatamiNorine2013}
H.~Hatami and S.~Norine.
\newblock The entropy of random-free graphons and properties.
\newblock {\em Combinatorics, Probability and Computing}, 22(04):517--526,
  2013.

\bibitem{HellNesetril2004}
P.~Hell and J.~Ne\v{s}et\v{r}il.
\newblock {\em Graphs and Homomorphisms}.
\newblock Oxford Lecture Series in Mathematics and Its Applications. Oxford
  University Press, 2004.

\bibitem{Janson2008}
S.~Janson.
\newblock Connectedness in graph limits.
\newblock Technical Report~8, Uppsala University, 2008.
\newblock ar{X}iv:0802.3795.

\bibitem{Janson2011b}
S.~Janson.
\newblock Poset limits and exchangeable random posets.
\newblock {\em Combinatorica}, 31(5):529--563, 2011.

\bibitem{Janson2011bpreprint}
S.~Janson.
\newblock Poset limits and exchangeable random posets.
\newblock {\tt arXiv:0902.0306}, 2011.
\newblock Preprint.

\bibitem{Janson2011}
S.~Janson.
\newblock Graph limits and hereditary properties.
\newblock {\em European J. Combinatorics}, 2015.
\newblock http://dx.doi.org/10.1016/j.ejc.2015.07.010.

\bibitem{LiebLoss2001}
E.~H. Lieb and M.~Loss.
\newblock {\em Analysis}.
\newblock American Mathematical Society, second edition, 2001.

\bibitem{Lovaszbook}
L.~Lov{\'a}sz.
\newblock {\em Large Networks and Graph Limits}.
\newblock Colloquium Publications. American Mathematical Society, 2012.

\bibitem{LovaszSzegedy2006}
L.~Lov{\'a}sz and B.~Szegedy.
\newblock Limits of dense graph sequences.
\newblock {\em J. Comb. Theory B}, 96(6):933--957, 2006.

\bibitem{LovaszSzegedy2010}
L.~Lov{\'a}sz and B.~Szegedy.
\newblock Regularity partitions and the topology of graphons.
\newblock In {\em An irregular mind: Szemer{\'e}di is 70}. Springer Verlag,
  2010.

\bibitem{LovaszSzegedy2011}
L.~Lov{\'a}sz and B.~Szegedy.
\newblock Finitely forcible graphons.
\newblock {\em J. Comb. Theory}, B 101(5):269--301, 2011.

\bibitem{Moon1968}
J.~W. Moon.
\newblock {\em Topics on Tournaments}.
\newblock Holt, Rinehart and Winston, Inc., 1968.

\end{thebibliography}
\end{document}